\providerobustcmd*{\bigcupdot}{%
  \mathop{%
    \mathpalette\bigop@dot\bigcup
  }%
}
\newrobustcmd*{\bigop@dot}[2]{%
  \setbox0=\hbox{$\m@th#1#2$}%
  \vbox{%
    \lineskiplimit=\maxdimen
    \lineskip=-0.7\dimexpr\ht0+\dp0\relax
    \ialign{%
      \hfil##\hfil\cr
      $\m@th\cdot$\cr
      \box0\cr
    }%
  }%
}
\newcommand{\mystackrel}[3][T]{\stackrel{\eqmakebox[#1]{\scriptsize#2}}{#3}}
\tikzset{
  schraffiert/.style={pattern=horizontal lines,pattern color=#1},
  schraffiert/.default=black
}
\newcommand\numbereq{%
  \ifmeasuring@\else
    \refstepcounter{equation}%
  \fi
  \tag{\theequation}%
}
\DeclareMathOperator{\Aut}{Aut}
\DeclareMathOperator{\Conf}{Conf}
\DeclareMathOperator{\MapOp}{Map}
\DeclareMathOperator{\PMapOp}{PMap}
\DeclareMathOperator{\HomeoOp}{Homeo^+}
\DeclareMathOperator{\PHomeoOp}{PHomeo}
\DeclareMathOperator{\Z}{Z}
\DeclareMathOperator{\B}{B}
\DeclareMathOperator{\Sym}{S}
\DeclareMathOperator{\PZ}{PZ}
\DeclareMathOperator{\ev}{ev}
\DeclareMathOperator{\id}{\mathrm{id}}
\title{Orbifold braid groups and complex braid groups}
\author{Jonas Flechsig}
\date{\today}
\def\l@section{\@tocline{1}{0,2pt}{2pc}{8mm}{\ \ }} 
\def\l@subsection{\@tocline{1}{0,2pt}{2pc}{8mm}{\ \ }} 
\renewcommand{\maketitle}{
    \begin{center}

        \phantom{.}  

        {\LARGE \bf \@title\par}
        \vspace{0.5cm}

    \end{center}
}\makeatother
\begin{document}
\newtheorem*{theorem*}{Theorem}
\newtheorem{theorem}{Theorem}[section]
\newtheorem{corollary}[theorem]{Corollary}
\newtheorem{lemma}[theorem]{Lemma}
\newtheorem{fact}[theorem]{Fact}
\newtheorem*{fact*}{Fact}
\newtheorem{proposition}[theorem]{Proposition}
\newtheorem{thmletter}{Theorem}
\newtheorem{observation}[theorem]{Observation}
\newtheorem{notation}[theorem]{Notation}
\renewcommand*{\thethmletter}{\Alph{thmletter}}
\theoremstyle{definition}
\newtheorem{example}[theorem]{Example}
\newtheorem{question}[theorem]{Question}
\newtheorem{definition}[theorem]{Definition}
\newtheorem{construction}[theorem]{Construction}
\theoremstyle{remark}
\newtheorem{remark}[theorem]{Remark}
\newtheorem{conjecture}[theorem]{Conjecture}
\newtheorem{case}{Case}
\newtheorem{claim}{Claim}
\newtheorem*{claim*}{Claim}
\newtheorem{step}{Step}
\counterwithin{case}{theorem}
\renewcommand{\thecase}{\arabic{case}}
\counterwithin{claim}{theorem}
\renewcommand{\theclaim}{\arabic{claim}}
\counterwithin{step}{theorem}
\renewcommand{\thestep}{\arabic{step}}

\newenvironment{intermediate}[1][\unskip]{%
\vspace*{5pt}
\par
\noindent
\textit{#1.}}
{}
\vspace*{5pt}

\newcommand{\doubletable}[1]{\begin{tabular}[l]{@{}l@{}}#1\end{tabular}}
\newcommand{\set}[1][ ]{\ensuremath{ \lbrace #1 \rbrace}}
\newcommand{\bsl}{\ensuremath{\setminus}}
\newcommand{\grep}[2]{\ensuremath{\left\langle #1 | #2\right\rangle}}
\renewcommand{\ll}{\left\langle}
\newcommand{\rr}{\right\rangle}
\newcommand{\cpxbrmn}{B(\TheOrder,\TheOrder,\TheStrand)}
\newcommand{\cpxbr}[2]{B(#1,#1,#2)}
\newcommand{\Map}[2]{\MapOp_{#1}\left({#2}\right)}
\newcommand{\PMap}[2]{\PMapOp_{#1}\left({#2}\right)}
\newcommand{\MapOrb}[2]{\MapOp_{#1}^{orb}\left({#2}\right)}
\newcommand{\PMapOrb}[2]{\PMapOp_{#1}^{orb}\left({#2}\right)}
\newcommand{\MapId}[2]{\MapOp_{#1}^{\id}\left({#2}\right)}
\newcommand{\PMapId}[2]{\PMapOp_{#1}^{\id}\left({#2}\right)}
\newcommand{\MapIdOrb}[2]{\MapOp_{#1}^{\id,orb}\left({#2}\right)}
\newcommand{\PMapIdOrb}[2]{\PMapOp_{#1}^{\id,orb}\left({#2}\right)}
\newcommand{\HomeoId}[2]{\HomeoOp_{#1}^{\id}({#2})}
\newcommand{\PHomeoId}[2]{\PHomeoOp_{#1}^{\id}({#2})}
\newcommand{\Homeo}[2]{\HomeoOp_{#1}({#2})}
\newcommand{\PHomeo}[2]{\PHomeoOp_{#1}({#2})}
\newcommand{\HomeoIdOrb}[2]{\HomeoOp_{#1}^{\id,orb}({#2})}
\newcommand{\PHomeoIdOrb}[2]{\PHomeoOp_{#1}^{\id,orb}({#2})}
\newcommand{\HomeoOrb}[2]{\HomeoOp_{#1}^{orb}({#2})}
\newcommand{\PHomeoOrb}[2]{\PHomeoOp_{#1}^{orb}({#2})}
\newcommand{\PHomeoOrbt}[3]{\PHomeoOp_{#1}^{orb}({#2})}

\newcommand{\omicron}{o}
\newcommand{\cp}{c}
\newcommand{\pct}{p}
\newcommand{\TheCone}{N}
\newcommand{\ThePct}{L}
\newcommand{\Pc}{\theta}
\newcommand{\Pct}{\iota}
\newcommand{\NPct}{\lambda}
\newcommand{\TheOrder}{m}
\newcommand{\Ord}{t}
\newcommand{\TheStrand}{n}
\newcommand{\Order}{l}
\newcommand{\Strr}{h}
\newcommand{\Str}{i}
\newcommand{\Strand}{j}
\newcommand{\NStrand}{k}
\newcommand{\NNStrand}{l}
\newcommand{\pStrand}{p}
\newcommand{\qStrand}{q}
\newcommand{\sStrand}{s}
\newcommand{\tStrand}{t}
\newcommand{\Subdivision}{i}
\newcommand{\TheSubdivision}{p}
\newcommand{\Dim}{i}
\newcommand{\NDim}{j}
\newcommand{\TheDim}{k}
\newcommand{\Subdiv}{i}
\newcommand{\TheSubdiv}{p}
\newcommand{\NSubdiv}{j}
\newcommand{\TheNSubdiv}{q}
\newcommand{\NNSubdiv}{k}
\newcommand{\TheNNSubdiv}{r}
\newcommand{\NNNSubdiv}{l}
\newcommand{\TheFrac}{\frac{2\pi}{\TheOrder}}
\newcommand{\HalfFrac}{\frac{\pi}{\TheOrder}}
\newcommand{\neigh}{\lambda}
\newcommand{\htwC}{h_1^\twsC}
\newcommand{\htw}{h_{\TheStrand-1}^{\twsC'}}
\newcommand{\col}{blue}
\newcommand{\colo}{olive}
\newcommand{\short}{red}
\newcommand{\mult}{orange}
\newcommand{\red}{red}
\newcommand{\green}{olive}
\newcommand{\gre}{green}
\newcommand{\blue}{blue}
\newcommand{\SGpct}{\Sigma_\freeprod(\ThePct,\TheCone)}
\newcommand{\SG}{\Sigma_\freeprod(\ThePct)}
\newcommand{\Spct}{\Sigma(\ThePct,\TheCone)}
\renewcommand{\S}{\Sigma(\ThePct)}
\newcommand{\Sk}[1]{\Sigma(#1)}
\newcommand{\orbtwo}{\Sigma_{\freeprodtwo}}
\newcommand{\G}{G}
\newcommand{\g}{g}
\newcommand{\freeprod}{\Gamma}
\newcommand{\freeprodtwo}{\cycm\ast\cyc{\TheOrder'}}
\newcommand{\freeprodex}{\Gamma_{\TheOrder_\nu}^\TheCone}
\newcommand{\freeprodexk}[1]{\Gamma_{\TheOrder_\nu}^{#1}}
\newcommand{\freegrp}[1]{F_{#1}}
\newcommand{\free}[1]{F^{(#1)}}
\newcommand{\cycm}{\ZZ_\TheOrder}
\newcommand{\cyc}[1]{\ZZ_{#1}}
\newcommand{\kernel}{K}
\newcommand{\inter}[1]{{#1}^{\circ}}
\newcommand{\interext}[1]{{#1}^{\circ,\text{ext}}}

\newcommand{\Twist}{A}
\newcommand{\TwistP}{B}
\newcommand{\TwistC}{C}
\newcommand{\TwP}[1]{T_{#1}}
\newcommand{\TwC}[1]{U_{#1}}
\newcommand{\twist}{a}
\newcommand{\twistP}{b}
\newcommand{\twistC}{c}
\newcommand{\twP}[1]{t_{#1}}
\newcommand{\twC}[1]{u_{#1}}
\newcommand{\twsP}{t}
\newcommand{\twsC}{u}
\newcommand{\Twistn}[1]{X_{#1}}
\newcommand{\TwistnP}[1]{Y_{#1}}
\newcommand{\TwistnC}[1]{Z_{#1}}
\newcommand{\twistn}[1]{x_{#1}}
\newcommand{\twistnP}[1]{y_{#1}}
\newcommand{\twistnC}[1]{z_{#1}}
\newcommand{\twistnsC}{z}
\newcommand{\TwsP}{T}
\newcommand{\TwsC}{U}
\newcommand{\FD}{F}

\newcommand{\MA}{\ensuremath{\mathcal{MA}_\TheStrand}}
\newcommand{\pMA}{\ensuremath{\mathcal{MA}_\TheStrand(F_\freeprod)}}

\newcommand{\HA}{\ensuremath{\mathcal{HA}_\TheStrand}}

\newcommand{\MAo}{\ensuremath{\mathcal{MA}_{\TheStrand}(\Sigma_\freeprod)}}
\newcommand{\pMAo}{\ensuremath{\mathcal{MA}_{\TheStrand}(\Sigma_\freeprod^\ast)}}

\newcommand{\MAoZ}{\ensuremath{\mathcal{MA}_{\TheStrand}(D_{\ZZ_\TheOrder})}}
\newcommand{\MAoD}{\ensuremath{\mathcal{MA}_{\TheStrand}(\CC_{D_\TheOrder})}}
\newcommand{\tMAo}{\ensuremath{\tilde{\mathcal{MA}}_{\TheStrand}(\Sigma_\freeprod)}}

\newcommand{\HAo}{\ensuremath{\mathcal{HA}_{\TheStrand}(\Sigma_\freeprod)}}

\newcommand{\bpHAk}[1]{\ensuremath{\mathcal{HA}_{\TheStrand,\TheStrand',#1}}}
\newcommand{\bpHAo}{\ensuremath{\mathcal{HA}_{\TheStrand,\TheStrand'}(\Sigma_\freeprod)}}
\newcommand{\bpHAok}[1]{\ensuremath{\mathcal{HA}_{\TheStrand,\TheStrand',#1}(\Sigma_\freeprod)}}
\newcommand{\bpMAo}{\ensuremath{\mathcal{MA}_{\TheStrand,\TheStrand'}(\Sigma_\freeprod)}}
\newcommand{\bpMAok}[1]{\ensuremath{\mathcal{MA}_{\TheStrand,\TheStrand',#1}(\Sigma_\freeprod)}}
\newcommand{\bpMAoF}{\ensuremath{\mathcal{MA}_{\TheStrand,\TheStrand',\ThePct}^{F_\freeprod}(\Sigma_\freeprod)}}

\newcommand{\pbpMAo}{\ensuremath{\mathcal{MA}_{\TheStrand,\TheStrand'}(\Sigma_\freeprod^\ast)}}
\newcommand{\tbpMAo}{\ensuremath{\mathcal{MA}_{\TheStrand,\TheStrand'}(\Sigma_\freeprod)}}
\newcommand{\bpMAos}{\ensuremath{\mathcal{MA}_{\TheStrand,\TheStrand'}^{sim}(\Sigma_\freeprod)}}
\newcommand{\bpMA}{\ensuremath{\mathcal{MA}_{\TheStrand,\TheStrand'}}}
\newcommand{\bpMAk}[2]{\ensuremath{\mathcal{MA}_{\TheStrand,\TheStrand',{#1}}}(#2)}
\newcommand{\pbpMA}{\ensuremath{\mathcal{MA}_{\TheStrand,\TheStrand'}(F_\freeprod^\ast)}}
\newcommand{\tbpMA}{\ensuremath{\mathcal{MA}_{\TheStrand,\TheStrand'}(\Sigma_\freeprod)}}

\newcommand{\bpM}{\ensuremath{\mathcal{M}_{\TheStrand,\TheStrand',k}}}
\newcommand{\seg}{s}
\newcommand{\st}{\mathrm{st}}
\newcommand{\map}{\rho}
\newcommand{\Rep}{T}

\newcommand{\GL}[2][\TheRank]{\ensuremath{\operatorname{GL_{#1}}(#2)}}
\newcommand{\hmu}[2]{h_{#2}^{\tau_{#1}}}
\newcommand{\Stab}{\operatorname{Stab}}
\newcommand{\CC}{\mathbb{C}}
\newcommand{\RR}{\mathbb{R}}
\newcommand{\ZZ}{\mathbb{Z}}
\newcommand{\NN}{\mathbb{N}}
\newcommand{\PP}{\mathbb{P}}
\newcommand{\HH}{\mathbb{H}}
\newcommand{\SSS}{\mathbb{S}}
\newcommand{\iotaPMap}{\iota_{\PMap_\TheStrand}}
\newcommand{\piPMap}{\pi_{\PMap_\TheStrand}}
\newcommand{\iotaMapast}{\iota_{\PMap_\TheStrand^\ast}}
\newcommand{\piMapast}{\pi_{\PMap_\TheStrand^\ast}}
\newcommand{\iotaPZ}{\iota_{\PZ_\TheStrand}}
\newcommand{\piPZ}{\pi_{\PZ_\TheStrand}}
\newcommand{\sPZ}{\mathrm{s}_{\PZ_\TheStrand}}
\newcommand{\iotaPZast}{\iota_{\PZ_\TheStrand^\ast}}
\newcommand{\piPZast}{\pi_{\PZ_\TheStrand^\ast}}
\newcommand{\iotaPZpct}{\iota_{\PZ_\TheStrand^\ast}}
\newcommand{\piPZpct}{\pi_{\PZ_\TheStrand^\ast}}
\newcommand{\Push}{\mathrm{Push}}
\newcommand{\Forget}{\mathrm{Forget}}
\newcommand{\PushPMap}{\mathrm{Push}_{\text{PMap}}}
\newcommand{\ForgetPMap}{\mathrm{Forget}_{\text{PMap}}}
\newcommand{\piMap}{\pi_{\text{Map}}}
\newcommand{\varphiMap}{\varphi_{\text{Map}}}

\author{J. Flechsig}
\address{Jonas Flechsig: Fakult\"at f\"ur Mathematik, Universit\"at Bielefeld, D-33501 Bielefeld, Germany}

\maketitle
\begin{center}
Jonas Flechsig
\\[5pt]
\today
\\[10pt]
\textbf{Abstract} 
\end{center}

A result of Allock \cite{Allcock2002} states that certain orbifold braid groups contain Artin groups of type $D_\TheStrand,\tilde{B}_\TheStrand$ and $\tilde{D}_\TheStrand$ as finite index subgroups. The underlying orbifolds have at most two cone points of order two. Based on \cite{Flechsig2023braid} and \cite{Flechsig2023mcg}, we generalize this result allowing cone points of arbitrary order. In these cases, the orbifold braid groups contain similar subgroups of finite index. We show that in many cases these subgroups can be identified as certain complex braid groups. 

\section{Introduction}

An orbifold braid group is an analog of Artin braid groups or, more generally, surface braid groups. Instead of considering point collections moving inside a disk or a surface, orbifold braids move inside a $2$-dimensional orbifold. Orbifold braid groups attracted interest since some of them contain spherical and affine Artin groups of type $D_\TheStrand,\tilde{B}_\TheStrand$ and $\tilde{D}_\TheStrand$ as finite index subgroups by work of Allcock \cite{Allcock2002}. For these Artin groups, the orbifold braid groups provide us with braid pictures. In the following, Roushon published several articles on the structure of orbifold braid groups \cite{Roushon2021,Roushon2022b,Roushon2023} and the contained Artin groups \cite{Roushon2021a}. Moreover, Crisp--Paris~\cite{CrispParis2005} studied the outer automorphism group of the orbifold braid group. The present article generalizes the result of Allcock \cite{Allcock2002}. 

As in the work of Roushon \cite{Roushon2021}, we consider braid groups on orbifolds with finitely many punctures and cone points (of possibly different orders). The underlying orbifolds are defined using the following data: Let $\freeprod$ be a free product of finitely many finite cyclic groups. As such $\freeprod$ acts on a planar, contractible surface $\Sigma$ (with boundary). This surface can be obtained by thickening the Bass--Serre tree, see Section \ref{subsec:orb_braid_grps} for details. If we add $\ThePct$ punctures, we obtain a similar orbifold as studied by Roushon, which we denote by $\Sigma_\freeprod(\ThePct)$. In contrast to his article, we consider orbifolds with non-empty boundary but this does not affect the structure of the orbifold braid groups. The only singular points in the orbifold $\Sigma_\freeprod(\ThePct)$ are cone points that correspond to the finite cyclic factors of the free product $\freeprod$. 

The elements of orbifold braid groups $\Z_\TheStrand(\Sigma_\freeprod(\ThePct))$ are represented by braid diagrams such as in Figure \ref{fig:braid_diagram_intro} with $\TheStrand$ strands (drawn in black), $\TheCone$ cone point bars (drawn in red with a cone at the top) and $\ThePct$ bars that represent the punctures (drawn in blue with a diamond at the top). The composition of these diagrams works as in Artin braid groups. 

\begin{figure}[H]
\import{Grafiken/introduction/}{braid_intro_wc.pdf_tex}
\caption{Example of an orbifold braid in $\Z_\TheStrand(\Sigma_\freeprod(\ThePct))$.}
\label{fig:braid_diagram_intro}
\end{figure}

Using the relation to the orbifold mapping class group introduced in \cite{Flechsig2023mcg}, one can obtain a finite presentation of the orbifold braid group, see Theorem \ref{thm:pres_Z_n} and \cite[Section 5.1]{Flechsig2023braid} for further details. 

The key difference to the Artin braid groups is that orbifold braid groups contain torsion elements, see Figure \ref{fig:fin_order_el} for an example. In this example braid all strands but one are constant and the non-constant strand encircles a single cone point bar but no other strands or punctures. See \cite[Remark 3.12]{Flechsig2023braid} for further details on these torsion elements. 

\begin{figure}[H]
\resizebox{0.6\textwidth}{!}{\import{Grafiken/introduction/}{fin_order_el_wc.pdf_tex}
}
\caption{A torsion element in $Z_\TheStrand(\Sigma_\freeprod(\ThePct))$.}
\label{fig:fin_order_el}
\end{figure}

\subsection*{Main result}

If the orbifold $\Sigma_\freeprod(\ThePct)$ has one or two cone points and at most one puncture, the presentation of $\Z_\TheStrand(\Sigma_\freeprod(\ThePct))$ given in Theorem \ref{thm:pres_Z_n} allows us to generalize the above mentioned result of Allcock \cite{Allcock2002} about Artin groups of types $D_\TheStrand, \tilde{B}_\TheStrand$ and $\tilde{D}_\TheStrand$: 

\begin{thmletter}
\label{thm-intro:general_Allcock}
\leavevmode
\begin{enumerate}[label=\textup{(\arabic*)}]
\item 
\label{thm-intro:general_Allcock_it1}
For $\TheStrand\geq4$, the orbifold braid group $\Z_\TheStrand(\Sigma_{\freeprodtwo})$ is a semidirect product 
\[
\tilde{A}(\Delta_\TheStrand^{\TheOrder,\TheOrder'})\rtimes(\cycm\times\cyc{\TheOrder'}) 
\]
with $\cycm\times\cyc{\TheOrder'}=\langle\twsC,\twsC'\rangle$ and $\tilde{A}(\Delta_\TheStrand^{\TheOrder,\TheOrder'})=\langle h_1,...,h_{\TheStrand-1},\htwC,\htw\rangle$. For $\TheOrder=\TheOrder'=2$ and $\TheStrand\geq3$, the group $\tilde{A}(\Delta_\TheStrand^{2,2})$ is an Artin group of type $\tilde{D}_\TheStrand$. For $(\TheOrder,\TheOrder')\in\{(3,2),(3,3),(4,2)\}$, the group $\tilde{A}(\Delta_\TheStrand^{\TheOrder,\TheOrder'})$ has the structure of a complex braid group.

\begin{figure}[H]
\centerline{\resizebox{1.35\textwidth}{!}{\import{Grafiken/introduction/}{gens_complex_braid_grp_2cp.pdf_tex}}}
\caption{\small{Generators of the subgroup $\tilde{A}(\Delta_\TheStrand^{\TheOrder,\TheOrder'})$.}}
\label{fig:gens_complex_braid_grp_2cp}
\end{figure}

\item 
\label{thm-intro:general_Allcock_it2}
For $\TheStrand\geq2$, the orbifold braid group $\Z_\TheStrand(D_{\cycm})$ is a semidirect product
\[
\tilde{A}(\Delta_\TheStrand^\TheOrder)\rtimes\cycm
\]
with $\cycm=\langle\twsC\rangle$ and $\tilde{A}(\Delta_\TheStrand^\TheOrder)=\langle h_1,...,h_{\TheStrand-1},\htwC\rangle$. 
For $\TheOrder=2$, the group $\tilde{A}(\Delta_\TheStrand^2)$ is an Artin group of type $D_\TheStrand$. For $\TheStrand=2$ and $\TheOrder\geq2$, the group $\tilde{A}(\Delta_2^\TheOrder)$ is an Artin group of type $I_2(\TheOrder)$. For $\TheStrand,\TheOrder\geq3$, the group $\tilde{A}(\Delta_\TheStrand^\TheOrder)$ is the complex braid group of type $\cpxbr{\TheOrder}{\TheStrand}$. 

\begin{figure}[H]
\resizebox{0.9\textwidth}{!}{\import{Grafiken/introduction/}{gens_complex_braid_grp_1cp.pdf_tex}}
\caption{\small{Generators of the subgroup $\tilde{A}(\Delta_\TheStrand^\TheOrder)$.}}
\label{fig:gens_complex_braid_grp_1cp}
\end{figure}

\item 
\label{thm-intro:general_Allcock_it3}
For $\TheStrand\geq3$, the orbifold braid group $\Z_\TheStrand(D_{\cycm}(1))$ is a semidirect product 
\[
\tilde{A}(\bar{\Delta}_\TheStrand^\TheOrder)\rtimes\cycm 
\]
with $\cycm=\langle\bar{\twsC}\rangle$ and $\tilde{A}(\bar{\Delta}_\TheOrder^\TheStrand)=\langle \twsP,h_1,...,h_{\TheStrand-1},h_{\TheStrand-1}^\twsC\rangle$. For $\TheOrder\in\{3,4\}$, the group $\tilde{A}(\bar{\Delta}_\TheStrand^\TheOrder)$ has the structure of a complex braid group. For $\TheOrder=2$, the group $\tilde{A}(\bar{\Delta}_\TheStrand^2)$ is an Artin group of type $\tilde{B}_\TheStrand$. For further details, see Chapter~\textup{\ref{sec:general_Allcock}}. 

\begin{figure}[H]
\centerline{\resizebox{1.35\textwidth}{!}{\import{Grafiken/introduction/}{gens_complex_braid_grp_1cp_1pct.pdf_tex}}}
\caption{Generators of the subgroup $\tilde{A}(\bar{\Delta}_\TheOrder^\TheStrand)$.}
\label{fig:gens_complex_braid_grp_1cp_1pct}
\end{figure}
\end{enumerate}
\end{thmletter}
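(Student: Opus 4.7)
The plan is to exhibit, for each of the three settings, a split short exact sequence
\[
1\to \tilde{A}\to \Z_\TheStrand\to C\to 1,
\]
where $C$ is the relevant product of cyclic groups and $\tilde{A}$ is the candidate subgroup, and then to identify the finite presentation of $\tilde{A}$ inherited from $\Z_\TheStrand$ with the known presentation of the relevant Artin or complex braid group. The starting point is the finite presentation of the orbifold braid group $\Z_\TheStrand(\Sigma_\freeprod(\ThePct))$ given by Theorem \ref{thm:pres_Z_n}, together with the explicit torsion description of Figure \ref{fig:fin_order_el}. The generating set in that presentation includes classical half-twist generators $h_\Str$ swapping adjacent strands, plus one generator $\twsC$ (respectively $\twsC,\twsC'$) per cone point which wraps a strand once around that cone point.

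First I would define the surjection $\pi\colon \Z_\TheStrand\twoheadrightarrow C$ by sending each strand-level generator $h_\Str$ to $0$ and each cone-point generator $\twsC_\nu$ to the generator of the corresponding $\cyc{\TheOrder_\nu}$-factor in $C$. Because $\twsC_\nu$ has finite order $\TheOrder_\nu$ in $\Z_\TheStrand$ and since $C$ is abelian, one only has to verify this on the defining relations of Theorem \ref{thm:pres_Z_n}; for commutation and braid relations between $h_\Str$'s this is immediate, and for the mixed relations involving $\twsC_\nu$ one checks that both sides lie in the same coset. This $\pi$ is manifestly surjective and split by $\twsC_\nu\mapsto\twsC_\nu$, which already gives the semidirect-product structure $\Z_\TheStrand\cong\ker\pi\rtimes C$.

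Next, I would show $\ker\pi$ is generated by the explicit elements listed in the statement, namely $h_1,\ldots,h_{\TheStrand-1}$ together with the conjugates $\htwC,\htw$ (and in case (3) the additional generator $\twsP$). The natural tool is a Reidemeister--Schreier computation with Schreier transversal $\{\twsC^a(\twsC')^b : 0\le a<\TheOrder,\,0\le b<\TheOrder'\}$; the stated generators then arise as the Reidemeister--Schreier lifts of $h_\Str$ conjugated by cosets $\twsC^a(\twsC')^b$, and a standard reduction using the braid relations shows that only the conjugates by $\twsC$ and $\twsC'$ in the positions adjacent to the cone points produce new generators. The resulting rewritten presentation of $\ker\pi$ will have the shape of the Corran--Picantin / Brou\'e--Malle--Rouquier presentation of the complex braid group $\cpxbr{\TheOrder}{\TheStrand}$ (case (2), $\TheStrand,\TheOrder\ge 3$), of $B(\TheOrder,\TheOrder',\TheStrand;\dots)$-type groups (case (1)), and of the ``one-puncture'' analogues (case (3)); for the specific small values $\TheOrder=2$ the relations collapse to the Coxeter--Dynkin relations of type $D_\TheStrand$, $\tilde{B}_\TheStrand$ or $\tilde{D}_\TheStrand$, and for $\TheStrand=2$ to those of $I_2(\TheOrder)$.

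The main obstacle will be Step 3: matching the Reidemeister--Schreier presentation produced by the previous step with the standard presentations of the complex braid groups of types $\cpxbr{\TheOrder}{\TheStrand}$ and its relatives. The presentations of complex braid groups from Brou\'e--Malle--Rouquier are not literally the ones that come out of the rewriting procedure, so a dictionary has to be set up generator by generator, and in each of the cases $(\TheOrder,\TheOrder')\in\{(3,2),(3,3),(4,2)\}$ in part (1) and $\TheOrder\in\{3,4\}$ in part (3) the identification has to be verified by a Tietze transformation. A clean way to organize this is to isolate once and for all the braid and mixed relations satisfied by $h_\Str,\htwC,\htw,\twsP$ directly from the braid pictures (Figures \ref{fig:gens_complex_braid_grp_2cp}, \ref{fig:gens_complex_braid_grp_1cp}, \ref{fig:gens_complex_braid_grp_1cp_1pct}) and then to quote the Corran--Picantin presentation, leaving only the verification that the two presentations are Tietze-equivalent; this is where the detailed case analysis promised in Section \ref{sec:general_Allcock} takes place.
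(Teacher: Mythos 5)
Your overall skeleton --- map $\Z_\TheStrand$ onto the product of cyclic groups by killing the $h_\Str$, split the surjection, and identify the kernel --- is the same decomposition the paper uses, but the plan as written has two concrete gaps. The first concerns the section. In $\Z_\TheStrand(\Sigma_{\freeprodtwo})$ the generators $\twC{1}$ and $\twC{2}$ are not known to commute: relation (5) of Theorem \ref{thm:pres_Z_n} only gives $[\twC{1},h_1^{-1}\twC{2}h_1]=1$. So the assignment ``$\twC{\nu}\mapsto\twC{\nu}$'' does not define a homomorphism $\cycm\times\cyc{\TheOrder'}\to\Z_\TheStrand(\Sigma_{\freeprodtwo})$. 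The paper's fix is to replace $\twC{2}$ by its conjugate $\twsC'=h_{\TheStrand-1}^{-1}\cdots h_1^{-1}\twC{2}h_1\cdots h_{\TheStrand-1}$, for which $[\twC{1},\twsC']=1$ does follow (see \ref{lem:psi_rel_S2}); this choice is what forces the asymmetric generating set with $\htwC$ and $\htw$ sitting at opposite ends of the row of strands.

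Second, and more seriously, the step you call a ``standard reduction'' of the Reidemeister--Schreier presentation is where essentially all of the mathematics lives, and your description of its outcome is not correct as stated. The kernel is not presented by the braid and mixed relations one reads off the pictures: it is the quotient $\tilde{A}$ of that Artin-type group by the additional relations $(h_1\htwC h_2)^2=(h_2h_1\htwC)^2$ and $(h_{\TheStrand-1}\htw h_{\TheStrand-2})^2=(h_{\TheStrand-2}h_{\TheStrand-1}\htw)^2$, and deriving these from the orbifold relations is a genuinely long computation (see \ref{lem:psi_rel_R3}, \ref{lem:psi_rel_R4} and \eqref{prop:sec_pres_free_prod_1fact_eq_add_rel}); your sketch never produces them, and without them the subgroup is misidentified whenever $\max\{\TheOrder,\TheOrder'\}>2$. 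Conversely one must show that no \emph{further} relations survive in the kernel; the paper does this not by completing the rewriting but by exhibiting a presentation of the whole group adapted to the splitting (Proposition \ref{prop:sec_pres_free_prod_2fact}) and then checking, via Lemma \ref{lem:semidir_prod_pres}, that conjugation by $\twsC$ and $\twsC'$ induces automorphisms of the group presented by the R-relations (Step 2 of Theorem \ref{thm:semidir_prod_2fact}). That check is exactly where the hypothesis $\TheStrand\geq4$ in part (1) enters: for $\TheStrand=3$ and $\max\{\TheOrder,\TheOrder'\}>2$ additional relations of type \ref{prop:sec_pres_free_prod_2fact_relR3_n3} really do appear (Remark \ref{rem:additional_rel}), so a uniform ``standard reduction'' yielding the clean $\tilde{A}(\Delta_\TheStrand^{\TheOrder,\TheOrder'})$ presentation for all $\TheStrand\geq3$ would prove something false. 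A correct execution of your plan must locate and use this hypothesis. The final identification with complex braid groups is then only a citation of the Brou\'e--Malle--Rouquier/Malle tables, and your Corran--Picantin route is a reasonable substitute for that last step.
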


Moreover, we can identify non-trivial elements in the center of the orbifold braid group $\Z_\TheStrand(D_{\cycm})$, see Proposition \ref{prop:D_Z_m_central_el}. As an application of Theorem \ref{thm-intro:general_Allcock}\ref{thm-intro:general_Allcock_it2}, we obtain a new proof that the center of $\B(\TheOrder,\TheOrder,\TheStrand)$ is non-trivial, see Corollary \ref{cor:cbr_grp_Delta_n_m_center}. The center of $\B(\TheOrder,\TheOrder,\TheStrand)$ was calculated in \cite[Corollary 3.33]{BroueMalleRouquier1998} by different methods. 

\section*{Acknowledgments} 

I thank my adviser Kai-Uwe Bux for his support and many helpful discussions. Thanks are also due to Georges Neaime and Ivan Marin for answering my questions about Artin groups and complex braid groups. I am grateful to Elena Tielker for proofreading parts of this text. 

The author was funded by the Deutsche Forschungsgemeinschaft (DFG, German Research Foundation) – 426561549 and Bielefelder Nachwuchsfonds.

\renewcommand{\Twist}{A}
\renewcommand{\TwistP}{B}
\renewcommand{\TwistC}{C}
\renewcommand{\twist}{a}
\renewcommand{\twistP}{b}
\renewcommand{\twistC}{c}

\setlist[enumerate,1]{label=\textup{(\arabic*)}, ref=\thetheorem(\arabic*)}
\setlist[enumerate,2]{label=\textup{\alph*)}, ref=\alph*)}

\section{Preliminaries}

\subsection{Orbifold braid groups}
\label{subsec:orb_braid_grps}

In this article, we only consider orbifolds that are given as the quotient of a manifold (typically a surface) by a proper group action. Recall that an action 
\[
\phi:\G\rightarrow\Homeo{}{M},\g\mapsto\phi_\g 
\]
on a manifold $M$ is \textit{proper} if for each compact set $K\subseteq M$ the set 
\[
\{\g\in\G\mid\phi_\g(K)\cap K\neq\emptyset\} 
\]
is compact. Since we endow $\G$ with the discrete topology, i.e.\ the above set is finite. Orbifolds that appear as proper quotients of manifolds are called \textit{developable} in the terminology of Bridson--Haefliger \cite{BridsonHaefliger2011} and \textit{good} in the terminology of Thurston \cite{Thurston1979}. 

\begin{definition}[{Orbifolds}, {\cite[Chapter III.G,1.3]{BridsonHaefliger2011}}]
\label{def:good_orb}
Let $M$ be an orientable manifold, possibly with boundary, and $\G$ a group with a monomorphism 
\[
\phi:\G\rightarrow\Homeo{}{M} 
\]
such that $\G$ acts properly on $M$. Under these conditions the $3$-tuple $(M,\G,\phi)$ is called an \textit{orbifold}, which we denote by $M_\G$. If $\Stab_\G(\cp)\neq\{1\}$ for a point $\cp\in M$, the point $\cp$ is called a \textit{singular point} of $M_\G$. If $\Stab_\G(\cp)$ for a point $\cp\in M$ is cyclic of finite order $\TheOrder\geq2$, the point $\cp$ is called a \textit{cone point} of $M_\G$ of order~$\TheOrder$. 
\end{definition}
 
A first example of an orbifold is the following: 

\begin{example}
\label{ex:good_orb_D_cyc_m}
Let $\cycm$ be a cyclic group of order $\TheOrder$. The group $\cycm$ acts on a disk~$D$ by rotations around its center. The action is via isometries and the acting group is finite, i.e.\ the action is proper. Consequently, $D_{\cycm}$ is an orbifold with exactly one singular point in the center of $D$, which is a cone point. 
\end{example}

Example \ref{ex:good_orb_D_cyc_m} motivates a more general construction for a free product of finitely many finite cyclic groups which we describe briefly in the following. We will consider this generalization of the orbifold $D_{\cycm}$ throughout the article. For further details, we refer to the authors PhD thesis \cite[Section 2.1]{Flechsig2023}. 

\begin{example}
\label{ex:good_orb_free_prod}
Let $\freeprod$ be a free product of finite cyclic groups $\cyc{\TheOrder_1},...,\cyc{\TheOrder_\TheCone}$. The group $\freeprod$ is the fundamental group of the following graph of groups with trivial edge groups: 
\begin{figure}[H]
\import{Grafiken/basics_orb_fundamental_grp/}{graph_of_grps_paper.pdf_tex}
\end{figure}
As such $\freeprod$ acts on its Bass--Serre tree $T$. The fundamental domain of this action is a path with $N-1$ edges. The action is free on edges and the vertex stabilizers are conjugates $\gamma\cyc{\TheOrder_\nu}\gamma^{-1}$ with $\gamma\in\freeprod$ and $1\leq\nu\leq\TheCone$. By the choice of a generator $\gamma_\nu$ for each $\cyc{\TheOrder_\nu}$ with $1\leq\nu\leq\TheCone$, the link of each vertex carries a cyclic ordering. 

Let us consider a proper embedding of the Bass--Serre tree~$T$ into $\CC$ that respects the local cyclic order on each link. If we choose a regular neighborhood of $T$ inside~$\CC$, we obtain a planar, contractible surface $\Sigma$ (with boundary), see Figure \textup{\ref{fig:constr_fund_domain}} for an example.

\begin{figure}[H]
\import{Grafiken/basics_orb_fundamental_grp/}{constr_fund_domain_wc.pdf_tex}
\caption{Thickened Bass--Serre tree for $\freeprod=\cyc{3}\ast\cyc{2}\ast\cyc{4}$ with fundamental domain shaded in gray. The red $\textcolor{red}{\bullet}$, blue~$\textcolor{blue}{\blacklozenge}$ and yellow $\textcolor{yellow}{\blacktriangle}$ vertices are conjugates of the free factors $\cyc{3}$, $\cyc{2}$ and $\cyc{4}$, respectively.}
\label{fig:constr_fund_domain}
\end{figure}

This surface $\Sigma$ inherits a proper $\freeprod$-action from the Bass--Serre tree such that vertex stabilizers act with respect to the cyclic order on the link of the stabilized vertex. Moreover, the action admits a fundamental domain corresponding to the fundamental domain in $T$. In particular, we obtain an orbifold structure $\Sigma_\freeprod$. 

A point in $\Sigma_\freeprod$ is a singular point if and only if it corresponds to a vertex of $T$. Hence, the singular points in $\Sigma_\freeprod$ are all cone points and decompose into $\TheCone$ orbits. The quotient $\Sigma/\freeprod$ is a disk with $\TheCone$ distinguished points that correspond to the orbits of the cone points. 

In general, we may choose a fundamental domain $F_\freeprod$ that is a disk as pictured in Figure \textup{\ref{fig:fund_domain}} and contains exactly $\TheCone$ cone points $\cp_1,...,\cp_\TheCone$ that lie on the boundary. 
\begin{figure}[H]
\resizebox{0.45\textwidth}{!}{\import{Grafiken/basics_orb_fundamental_grp/}{fundamental_domain.pdf_tex}}
\caption{}
\label{fig:fund_domain}
\end{figure}

If we remove the boundary of $\Sigma$, the quotient $\Sigma^\circ/\freeprod$ is homeomorphic to the complex plane with $\TheCone$ distinguished points and associated cyclic groups $\cyc{\TheOrder_\nu}$ for $1\leq\nu\leq\TheCone$. Adding $\freeprod$-orbits of punctures $\freeprod(r_\NPct)$ for $1\leq\NPct\leq\ThePct$ to $\Sigma$ such that $\freeprod(r_\Pc)\neq\freeprod(r_\NPct)$ for $\Pc\neq\NPct$, we obtain the orbifold called $\CC(\ThePct,\TheCone,\textbf{\TheOrder}) \text{ with } \textbf{\TheOrder}=(\TheOrder_1,...,\TheOrder_\TheCone)$ in \cite{Roushon2021}. Allcock \cite{Allcock2002} studies braids on these orbifolds for 
\[
(\ThePct,\TheCone,\textbf{\TheOrder})=(0,2,(2,2)), (0,1,(2)) \text{ and } (1,1,(2)). 
\]
\end{example}

\newcommand{\ConfnOrb}{\Conf_\TheStrand^{\G}(M_\G)}

The orbifold braid group $\Z_\TheStrand(M_\G)$ is defined as the orbifold fundamental group of the orbifold configuration space
\[
\ConfnOrb:=(M^\TheStrand\setminus\Delta_\TheStrand^\G(M))_{\G^\TheStrand\rtimes\Sym_\TheStrand} \; \text{ with }\; \Delta_\TheStrand^\G(M)=\{(x_1,...,x_\TheStrand)\in M^\TheStrand\mid x_\Str=\g(x_\Strand) \text{ for } \g\in\G, \; \Str\neq\Strand\}, 
\]
see \cite[Section 3]{Flechsig2023braid} for details. If $M_\G=\SG$, the elements in $\Z_\TheStrand(\Sigma_\freeprod(\ThePct))$ are specified by orbifold braid diagrams as in Figure \ref{fig:generators}. One easily proves, that $\Z_\TheStrand(\SG)$ is generated by $h_\Strand,\twP{\NPct}$ and $\twC{\nu}$ for $1\leq\Strand<\TheStrand, 1\leq\NPct\leq\ThePct$ and $1\leq\nu\leq\TheCone$. 
\begin{figure}[H]
\resizebox{0.55\textwidth}{!}{\import{Grafiken/braid_and_mcg/}{generators.pdf_tex}}
\caption{Generators $h_\Strand$, $\twP{\NPct}$ and $\twC{\nu}$ (from top to bottom).}
\label{fig:generators}
\end{figure}

Moreover, we will use the following abbreviations for $1\leq\Str,\Strand,\NStrand\leq\TheStrand, \Str<\Strand$, $1\leq\NPct\leq\ThePct$ and $1\leq\nu\leq\TheCone$: 

\begin{align*}
\label{eq:def_a_ji}
\twist_{\Strand\Str}:= & h_{\Strand-1}^{-1}...h_{\Str+1}^{-1}h_\Str^2h_{\Str+1}...h_{\Strand-1}, \numbereq
\\
\label{eq:def_a_kr}
\twistP_{\NStrand\NPct}:= & h_{\NStrand-1}^{-1}...h_1^{-1}\twP{\NPct}h_1...h_{\NStrand-1} \text{ and } \numbereq 
\\
\label{eq:def_a_kc}
\twistC_{\NStrand\nu}:= & h_{\NStrand-1}^{-1}...h_1^{-1}\twC{\nu}h_1...h_{\NStrand-1}. \numbereq
\end{align*}

The corresponding braid diagrams are pictured in Figure \ref{fig:pure_generators}. 
\begin{figure}[H]
\resizebox{0.65\textwidth}{!}{\import{Grafiken/orb_braids/}{pure_generators.pdf_tex}}
\caption{Orbifold braids $\twist_{\Strand\Str}$, $\twistP_{\NStrand\NPct}$ and $\twistC_{\NStrand\nu}$ (from top to bottom).}
\label{fig:pure_generators}
\end{figure}

The orbifold braid diagrams in particular allow for all the transformations known from the Artin braid diagrams. Moreover, they satisfy additional relations $\twistC_{\NStrand\nu}^{\TheOrder_\nu}=1$ for each $1\leq\NStrand\leq\TheStrand$ and $1\leq\nu\leq\TheCone$, see \cite[Remark 3.12]{Flechsig2023braid} for details. In terms of braid diagrams this relation represents an additional orbifold Reidemeister move, see Figure \ref{fig:orb-Reidemeister-move}. 

\begin{figure}[H]
\centerline{\import{Grafiken/orb_braids/}{orb-Reidemeister-move_wc.pdf_tex}}
\caption{Additional Reidemeister move for orbifold braid diagrams in $\Z_\TheStrand(\Sigma_\freeprod(\ThePct))$ for $\TheOrder_\nu=3$.}
\label{fig:orb-Reidemeister-move}
\end{figure}

Comparing the following presentation of $\Z_\TheStrand(\SG)$ with the well known presentation of the Artin braid group, shows that the orbifold Reidemeister moves are the only essential difference between orbifold braid groups and Artin braid groups. 

\begin{theorem}[{\cite[Corollary 5.7]{Flechsig2023braid}}]
\label{thm:pres_Z_n}
The orbifold braid group $\Z_\TheStrand(\SG)$ is presented by generators 
\[
h_\Strand, \twP{\NPct} \; \text{ and } \; \twC{\nu} \; \text{ for } \; 1\leq\Strand<\TheStrand, 1\leq\NPct\leq\ThePct \; \text{ and } \; 1\leq\nu\leq\TheCone 
\]
and the following defining relations for $2\leq\Strand<\TheStrand$, $1\leq\Pc,\NPct\leq\ThePct$ with $\Pc<\NPct$ and $1\leq\mu,\nu\leq\TheCone$ with $\mu<\nu$: 
\begin{enumerate}
\item \label{cor:pres_orb_braid_free_prod_rel1} 
$\twC{\nu}^{\TheOrder_\nu}=1$, 
\item \label{cor:pres_orb_braid_free_prod_rel2} 
$h_{\Strand-1}h_\Strand h_{\Strand-1}=h_\Strand h_{\Strand-1}h_\Strand$ and $[h_\NStrand,h_\NNStrand]=1$ \; for \; $1\leq\NStrand,\NNStrand<\TheStrand$ with $\vert\NStrand-\NNStrand\vert\geq2$, 
\item \label{cor:pres_orb_braid_free_prod_rel3} 
$[\twP{\NPct},h_\Strand]=1$ \; and \; $[\twC{\nu},h_\Strand]=1$, 
\item \label{cor:pres_orb_braid_free_prod_rel4} 
$[h_1\twP{\NPct}h_1,\twP{\NPct}]=1$ \; and \; $[h_1\twC{\nu}h_1,\twC{\nu}]=1$, 
\item \label{cor:pres_orb_braid_free_prod_rel5}
$[\twP{\Pc},\twistP_{2\NPct}]=1$, $[\twC{\mu},\twistC_{2\nu}]=1$ \; and \; $[\twP{\NPct},\twistC_{2\nu}]=1$
\\
with $\twistP_{2\NPct}=h_1^{-1}\twP{\NPct}h_1$ and $\twistC_{2\nu}=h_1^{-1}\twC{\nu} h_1$. 
\end{enumerate}
\end{theorem}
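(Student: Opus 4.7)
The plan is to derive the presentation by combining the orbifold mapping class group framework from \cite{Flechsig2023mcg} with a Fadell--Neuwirth-type induction on the number of strands $\TheStrand$.

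First I would invoke the identification of $\Z_\TheStrand(\SG)$ with a subgroup of the orbifold mapping class group of $\SG$ with $\TheStrand$ marked points, as established in \cite{Flechsig2023mcg}. Under this identification, the generators $h_\Strand$, $\twP{\NPct}$ and $\twC{\nu}$ correspond to concrete homeomorphisms: half-twists exchanging adjacent marked points, and point-pushing loops around the puncture $r_\NPct$ or the cone point $c_\nu$, respectively. This reduces the problem to computing a presentation of a mapping class group via arc systems.

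Second, I would set up the forgetful map $\Z_\TheStrand(\SG) \to \Z_{\TheStrand-1}(\SG)$ that drops the last strand. On the level of orbifold configuration spaces this is a fibration, and its fiber is the orbifold fundamental group of $\SG$ punctured at $\TheStrand-1$ additional points. By a van Kampen argument on the thickened Bass--Serre tree model of $\Sigma$, this fiber is a free product of $\ThePct+\TheStrand-1$ infinite cyclic groups with the $\TheCone$ finite cyclic groups $\cyc{\TheOrder_\nu}$. Inducting on $\TheStrand$, starting from the base case $\TheStrand=1$ where the orbifold fundamental group is computed directly from the graph-of-groups description of $\freeprod$, the resulting short exact sequences assemble into a presentation of $\Z_\TheStrand(\SG)$ on the listed generators.

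The main obstacle is extracting the defining relations in the form \textup{(1)}--\textup{(5)}. Relations \textup{(2)}--\textup{(5)} are inherited from the classical presentation of the surface braid group on the $(\ThePct+\TheCone)$-punctured disk, once one temporarily forgets the orbifold structure at the cone points. The torsion relation \textup{(1)}, $\twC{\nu}^{\TheOrder_\nu}=1$, captures exactly the extra orbifold input and corresponds geometrically to the Reidemeister move in Figure \ref{fig:orb-Reidemeister-move}. The delicate step is to verify via Tietze transformations that \textup{(1)} together with the classical disk-braid relations suffice: any further identity imposed by the cyclic factors of $\freeprod$ must already follow from the listed relations, which can be checked using the explicit fundamental domain $F_\freeprod$ of $\Sigma$ to track how loops around cone points interact with the half-twists and with the $\twP{\NPct}$, $\twC{\nu}$ on other indices.
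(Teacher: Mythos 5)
Your overall strategy --- relate $\Z_\TheStrand(\SG)$ to the orbifold mapping class group and induct on the number of strands via the forgetful map --- is the same one the author points to when citing \cite{Flechsig2023braid}, but two of your steps hide exactly the difficulties that the cited proof exists to resolve, so as written the argument has genuine gaps.

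First, the relationship with the mapping class group goes the other way around: the evaluation map $\ev:\MapIdOrb{\TheStrand}{\SG}\rightarrow\Z_\TheStrand(\SG)$ exhibits the orbifold braid group as a \emph{quotient} of $\MapIdOrb{\TheStrand}{\SG}$, not a subgroup. The mapping class group is torsion free, whereas $\Z_\TheStrand(\SG)$ contains the torsion elements of Figure \ref{fig:fin_order_el}, so $\ev$ has non-trivial kernel. Identifying that kernel (it is normally generated by the $\TheOrder_\nu$-th powers of the twists about the cone points, which is where relation (1) ultimately comes from) is a substantial part of the proof, not something you obtain for free by ``reducing to a mapping class group computation''.

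Second, and more seriously, your induction treats the forgetful maps as producing short exact sequences with fiber the orbifold fundamental group of the punctured orbifold, namely $F_{\ThePct+\TheStrand-1}\ast\cyc{\TheOrder_1}\ast\cdots\ast\cyc{\TheOrder_\TheCone}$. For orbifold configuration spaces this is precisely what fails: the point-pushing homomorphism from this free product into $\Z_\TheStrand(\SG)$ is not injective in general, which is one of the central points of \cite{Flechsig2023braid}. Feeding the free-product presentation of the fiber into the standard presentation-of-an-extension machinery therefore only produces a group that surjects onto $\Z_\TheStrand(\SG)$; to see that relations (1)--(5) already suffice you must control the kernel of point-pushing, and your closing sentence (``any further identity \dots can be checked using the explicit fundamental domain'') asserts this rather than proves it. The cited argument sidesteps the problem by running the honestly exact Birman sequences at the level of the mapping class group first, where the fiber is a genuine free group, and only afterwards passing to the quotient by $\ker(\ev)$.
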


\begin{theorem}[{\cite[Corollary 5.6]{Flechsig2023braid}}]
\label{thm:pres_pure_free_prod}
The pure orbifold braid group $\PZ_\TheStrand(\Sigma_\freeprod(\ThePct))$ is presented by generators 
\[
\twist_{\Strand\Str}, \twistP_{\NStrand\NPct} \; \text{ and } \; \twistC_{\NStrand\nu}, \; \text{ for } \; 1\leq\Str,\Strand,\NStrand\leq\TheStrand, \Str<\Strand, 1\leq\NPct\leq\ThePct \; \text{ and } \; 1\leq\nu\leq\TheCone 
\]
and the following defining relations for $1\leq\Str,\Strand,\NStrand,\NNStrand\leq\TheStrand$ with $\Str<\Strand<\NStrand<\NNStrand$, $1\leq\Pc,\NPct\leq\ThePct$ with $\Pc<\NPct$ and $1\leq\mu,\nu,\leq\TheCone$ with $\mu<\nu$: 
\begin{enumerate}
\item 
\label{cor:pres_pure_free_prod_rel1}
$\twistC_{\NStrand\nu}^{\TheOrder_\nu}=1$, 
\item 
\label{cor:pres_pure_free_prod_rel2}
$[\twist_{\Strand\Str},\twist_{\NNStrand\NStrand}]=1$, \; 
$[\twistP_{\Strand\NPct},\twist_{\NNStrand\NStrand}]=1$ \; and \; 
$[\twistC_{\Strand\nu},\twist_{\NNStrand\NStrand}]=1$, 
\item 
\label{cor:pres_pure_free_prod_rel3}
$[\twist_{\NNStrand\Str},\twist_{\NStrand\Strand}]=1$, \; $[\twistP_{\NNStrand\NPct},\twist_{\NStrand\Strand}]=1$, \; 
$[\twistP_{\NNStrand\NPct},\twistP_{\NStrand\Pc}]=1$, \; 
$[\twistC_{\NNStrand\nu},\twist_{\NStrand\Strand}]=1$, \; 
$[\twistC_{\NNStrand\nu},\twistP_{\NStrand\NPct}]=1$ \; and \; 
$[\twistC_{\NNStrand\nu},\twistC_{\NStrand\mu}]=1$, 
\item 
\label{cor:pres_pure_free_prod_rel4}
$[\twist_{\NNStrand\NStrand}\twist_{\NNStrand\Strand}\twist_{\NNStrand\NStrand}^{-1},\twist_{\NStrand\Str}]=1$, \; 
$[\twist_{\NStrand\Strand}\twist_{\NStrand\Str}\twist_{\NStrand\Strand}^{-1},\twistP_{\Strand\NPct}]=1$, \; 
$[\twist_{\NStrand\Strand}\twistP_{\NStrand\Pc}\twist_{\NStrand\Strand}^{-1},\twistP_{\Strand\NPct}]=1$, 
\\
$[\twist_{\NStrand\Strand}\twist_{\NStrand\Str}\twist_{\NStrand\Strand}^{-1},\twistC_{\Strand\nu}]=1$ \; and \; 
$[\twist_{\NStrand\Strand}\twistC_{\NStrand\mu}\twist_{\NStrand\Strand}^{-1},\twistC_{\Strand\nu}]=1$, 
\item 
\label{cor:pres_pure_free_prod_rel5}
$\twist_{\NStrand\Strand}\twist_{\NStrand\Str}\twist_{\Strand\Str}=\twist_{\Strand\Str}\twist_{\NStrand\Strand}\twist_{\NStrand\Str}=\twist_{\NStrand\Str}\twist_{\Strand\Str}\twist_{\NStrand\Strand}$, 
\\
$\twist_{\Strand\Str}\twistP_{\Strand\NPct}\twistP_{\Str\NPct}=\twistP_{\Str\NPct}\twist_{\Strand\Str}\twistP_{\Strand\NPct}=\twistP_{\Strand\NPct}\twistP_{\Str\NPct}\twist_{\Strand\Str}$ and 
\\
$\twist_{\Strand\Str}\twistC_{\Strand\nu}\twistC_{\Str\nu}=\twistC_{\Str\nu}\twist_{\Strand\Str}\twistC_{\Strand\nu}=\twistC_{\Strand\nu}\twistC_{\Str\nu}\twist_{\Strand\Str}$. 
\end{enumerate}
\end{theorem}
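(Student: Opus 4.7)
The plan is to derive the presentation from that of the full orbifold braid group $\Z_\TheStrand(\SG)$ in Theorem \ref{thm:pres_Z_n} via the Reidemeister--Schreier method applied to the short exact sequence
\[
1 \longrightarrow \PZ_\TheStrand(\SG) \longrightarrow \Z_\TheStrand(\SG) \longrightarrow \Sym_\TheStrand \longrightarrow 1,
\]
in which the right-hand map sends an orbifold braid to its underlying permutation of endpoints.

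First I verify that the elements $\twist_{\Strand\Str}, \twistP_{\NStrand\NPct}, \twistC_{\NStrand\nu}$ actually lie in $\PZ_\TheStrand(\SG)$ and geometrically represent the expected pure braids. Since $\twist_{\Strand\Str}$ is a conjugate of $h_\Str^2$, it is pure and depicts strand $\Strand$ looping once around strand $\Str$; the formulas \eqref{eq:def_a_kr} and \eqref{eq:def_a_kc} analogously show that $\twistP_{\NStrand\NPct}$ and $\twistC_{\NStrand\nu}$ depict strand $\NStrand$ looping around the $\NPct$-th puncture bar, respectively the $\nu$-th cone point bar, as in Figure \ref{fig:pure_generators}.

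Next I check that all listed relations hold in $\PZ_\TheStrand(\SG)$. This can be done either diagrammatically, using ordinary Reidemeister moves together with the orbifold move of Figure \ref{fig:orb-Reidemeister-move}, or algebraically by substituting \eqref{eq:def_a_ji}--\eqref{eq:def_a_kc} and reducing via Theorem \ref{thm:pres_Z_n}. Relation (1) follows immediately from $\twC{\nu}^{\TheOrder_\nu}=1$ since conjugation preserves orders. The commutation relations (2)--(4) are the familiar surface pure braid commutations, extended to mixed strand/puncture/cone-point pairs, while the triangle relations (5) are the orbifold analogues of the classical Burau-type identities.

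Finally, to show these relations are complete, I apply the Reidemeister--Schreier algorithm to the presentation in Theorem \ref{thm:pres_Z_n}, with a Schreier transversal given by positive lifts of $\Sym_\TheStrand$ into $\Z_\TheStrand(\SG)$, e.g.\ braid-theoretic normal forms in the $h_\Strand$. This yields generators $T_{\sigma,x}$ indexed by transversal elements $\sigma$ and $x\in\{h_\Strand,\twP{\NPct},\twC{\nu}\}$. Discarding the $T_{\sigma,x}$ that become trivial and renaming the rest according to \eqref{eq:def_a_ji}--\eqref{eq:def_a_kc}, one recovers exactly the stated generating set; rewriting each defining relation of Theorem \ref{thm:pres_Z_n} then produces consequences of (1)--(5). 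The main obstacle is the combinatorial bookkeeping in this last step: the Reidemeister--Schreier procedure produces a large, highly redundant set of relations, and one must organize the Tietze reductions carefully so that the triangle relations (5) emerge in their symmetric form. The mixed relations involving both ordinary crossings and cone/puncture generators are the most delicate.
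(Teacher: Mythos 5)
The paper does not prove this statement itself: it is quoted from \cite{Flechsig2023braid} (Corollary 5.6 there), where the defining relations are obtained from a presentation of the orbifold mapping class group $\MapIdOrb{\TheStrand}{\SG}$ transported along the evaluation map $\ev:\MapIdOrb{\TheStrand}{\SG}\rightarrow\Z_\TheStrand(\Sigma_\freeprod(\ThePct))$, not by Reidemeister--Schreier. Your route is therefore genuinely different and in principle legitimate, but two things keep it from being a proof. First, there is a circularity hazard: in the cited source the pure presentation (Corollary 5.6) is established \emph{before}, and is used to derive, the presentation of the full group (Corollary 5.7, i.e.\ Theorem \ref{thm:pres_Z_n} here). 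Taking Theorem \ref{thm:pres_Z_n} as your starting point therefore rests on a result whose published proof passes through the very statement you are proving; for a self-contained argument you would need an independent derivation of Theorem \ref{thm:pres_Z_n}.

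Second, and more seriously, the entire content of the theorem is the completeness of the list of relations (1)--(5), and your proposal defers exactly that step. Reidemeister--Schreier applied to the index-$\TheStrand!$ subgroup produces one rewritten relator for each pair consisting of a transversal element and a defining relator of Theorem \ref{thm:pres_Z_n} --- on the order of $\TheStrand!$ times the number of relations --- together with a large set of generators $T_{\sigma,x}$, most of which must be eliminated by Tietze transformations. The claims that the surviving generators are precisely the stated $a_{ji}$, $b_{k\lambda}$, $c_{k\nu}$ and that every rewritten relator reduces to a consequence of (1)--(5) are asserted, not verified; for the classical braid group this computation is already a substantial piece of work requiring a carefully chosen Schreier transversal, and here it is further complicated by the torsion relations $\twC{\nu}^{\TheOrder_\nu}=1$ and the mixed puncture/cone-point relations of Theorem \ref{thm:pres_Z_n}. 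Until that bookkeeping is actually carried out, what you have is a plausible plan rather than a proof.
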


That the group $\Z_\TheStrand(\Sigma_\freeprod(\ThePct))$ and its pure subgroup $\PZ_\TheStrand(\Sigma_\freeprod(\ThePct))$ are generated by the above generating sets follows as in the case of Artin braid groups, see \cite[Section 3]{Flechsig2023} for details. Moreover, a discussion of orbifold mapping class groups $\MapIdOrb{\TheStrand}{\SG}$ and an evaluation map $\ev:\MapIdOrb{\TheStrand}{\SG}\rightarrow\Z_\TheStrand(\Sigma_\freeprod(\ThePct))$ yields the defining relations from above, see \cite[Sections 5 and 6]{Flechsig2023braid} for details. 

\subsubsection{Alternative generating sets in the case $\TheCone+\ThePct\leq2$}
\label{subsubsec:alt_gen_sets}

We close this chapter giving a shorter description of the generating set from Theorem~\ref{thm:pres_Z_n} in three special cases. If $\Sigma$ is a disk without punctures on which the cyclic group of order $\TheOrder$ acts (see Example \ref{ex:good_orb_D_cyc_m}), the group $\Z_\TheStrand(D_{\cycm})$ is generated by $h_1,...,h_{\TheStrand-1}$ and $\twC{1}$. In this case, we shorten our notation using $\twsC$ instead of $\twC{1}$. The braid $\twsC$ projects to the diagram below.  
\begin{center}
\begingroup%
  \makeatletter%
  \providecommand\color[2][]{%
    \errmessage{(Inkscape) Color is used for the text in Inkscape, but the package 'color.sty' is not loaded}%
    \renewcommand\color[2][]{}%
  }%
  \providecommand\transparent[1]{%
    \errmessage{(Inkscape) Transparency is used (non-zero) for the text in Inkscape, but the package 'transparent.sty' is not loaded}%
    \renewcommand\transparent[1]{}%
  }%
  \providecommand\rotatebox[2]{#2}%
  \newcommand*\fsize{\dimexpr\f@size pt\relax}%
  \newcommand*\lineheight[1]{\fontsize{\fsize}{#1\fsize}\selectfont}%
  \ifx\svgwidth\undefined%
    \setlength{\unitlength}{96.969685bp}%
    \ifx\svgscale\undefined%
      \relax%
    \else%
      \setlength{\unitlength}{\unitlength * \real{\svgscale}}%
    \fi%
  \else%
    \setlength{\unitlength}{\svgwidth}%
  \fi%
  \global\let\svgwidth\undefined%
  \global\let\svgscale\undefined%
  \makeatother%
  \begin{picture}(1,0.71491647)%
    \lineheight{1}%
    \setlength\tabcolsep{0pt}%
    \put(0,0){\includegraphics[width=\unitlength,page=1]{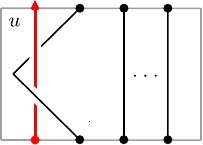}}%
  \end{picture}%
\endgroup%

\end{center}

\newpage

If $D$ contains additional punctures in $\cycm(r_1)$, the group $\Z_\TheStrand(D_{\cycm}(1))$ is generated by $h_1,...,h_{\TheStrand-1},\twC{1}$ and $\twP{1}$. Similar as above, we use $\twsP$ to abbreviate $\twP{1}$ and further we introduce $\bar{\twsC}:=h_{\TheStrand-1}^{-1}...h_1^{-1}\twC{1}h_1...h_{\TheStrand-1}\stackrel{\eqref{eq:def_a_kc}}=\twistC_{\TheStrand1}$. The element $\bar{\twsC}$ can be homotoped such that it fixes the first $\TheStrand-1$ strands. In this case, the braids $\twsP$ and $\bar{\twsC}$ project to the diagrams below. 
\begin{center}
\begingroup%
  \makeatletter%
  \providecommand\color[2][]{%
    \errmessage{(Inkscape) Color is used for the text in Inkscape, but the package 'color.sty' is not loaded}%
    \renewcommand\color[2][]{}%
  }%
  \providecommand\transparent[1]{%
    \errmessage{(Inkscape) Transparency is used (non-zero) for the text in Inkscape, but the package 'transparent.sty' is not loaded}%
    \renewcommand\transparent[1]{}%
  }%
  \providecommand\rotatebox[2]{#2}%
  \newcommand*\fsize{\dimexpr\f@size pt\relax}%
  \newcommand*\lineheight[1]{\fontsize{\fsize}{#1\fsize}\selectfont}%
  \ifx\svgwidth\undefined%
    \setlength{\unitlength}{237.53497319bp}%
    \ifx\svgscale\undefined%
      \relax%
    \else%
      \setlength{\unitlength}{\unitlength * \real{\svgscale}}%
    \fi%
  \else%
    \setlength{\unitlength}{\svgwidth}%
  \fi%
  \global\let\svgwidth\undefined%
  \global\let\svgscale\undefined%
  \makeatother%
  \begin{picture}(1,0.29458192)%
    \lineheight{1}%
    \setlength\tabcolsep{0pt}%
    \put(0,0){\includegraphics[width=\unitlength,page=1]{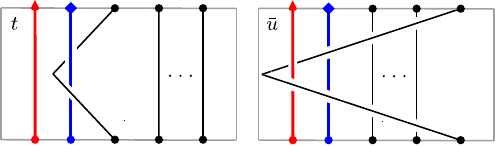}}%
  \end{picture}%
\endgroup%

\end{center}

If $\Sigma$ is the tree-shaped surface without punctures on which the group $\freeprodtwo$ acts (see Example \ref{ex:good_orb_free_prod}), the group $\Z_\TheStrand(\Sigma_{\freeprodtwo})$ is generated by $h_1,...,h_{\TheStrand-1},\twC{1}$ and $\twC{2}$. For the generator $\twC{1}$, we use our short notation $\twsC$ again. Instead of the generator $\twC{2}$, we consider the conjugate $\twsC':=h_{\TheStrand-1}^{-1}...h_1^{-1}\twC{2}h_1...h_{\TheStrand-1}\stackrel{(\ref{eq:def_a_kc})}=\twistC_{\TheStrand2}$. While $\twsC$ fixes the last $\TheStrand-1$ strands, $\twsC'$ is homotopic to a braid that fixes the first $\TheStrand-1$ strands. These braids project to the diagrams below.  
\begin{center}
\begingroup%
  \makeatletter%
  \providecommand\color[2][]{%
    \errmessage{(Inkscape) Color is used for the text in Inkscape, but the package 'color.sty' is not loaded}%
    \renewcommand\color[2][]{}%
  }%
  \providecommand\transparent[1]{%
    \errmessage{(Inkscape) Transparency is used (non-zero) for the text in Inkscape, but the package 'transparent.sty' is not loaded}%
    \renewcommand\transparent[1]{}%
  }%
  \providecommand\rotatebox[2]{#2}%
  \newcommand*\fsize{\dimexpr\f@size pt\relax}%
  \newcommand*\lineheight[1]{\fontsize{\fsize}{#1\fsize}\selectfont}%
  \ifx\svgwidth\undefined%
    \setlength{\unitlength}{237.53497319bp}%
    \ifx\svgscale\undefined%
      \relax%
    \else%
      \setlength{\unitlength}{\unitlength * \real{\svgscale}}%
    \fi%
  \else%
    \setlength{\unitlength}{\svgwidth}%
  \fi%
  \global\let\svgwidth\undefined%
  \global\let\svgscale\undefined%
  \makeatother%
  \begin{picture}(1,0.29458198)%
    \lineheight{1}%
    \setlength\tabcolsep{0pt}%
    \put(0,0){\includegraphics[width=\unitlength,page=1]{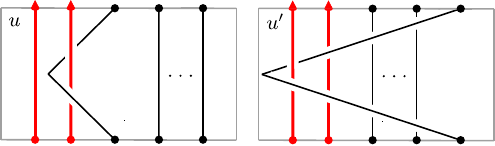}}%
  \end{picture}%
\endgroup%

\end{center}

Using the new notations, we obtain: 

\begin{corollary}
\label{cor:gen_sets}
\leavevmode
\begin{enumerate}
\item \label{cor:gen_sets_1fact} The orbifold braid group $\Z_\TheStrand(D_{\cycm})$ is generated by 
\[
h_1, ..., h_{\TheStrand-1} \; \text{ and } \; \twsC. 
\]
\item \label{cor:gen_sets_1fact_pct} The orbifold braid group $\Z_\TheStrand(D_{\cycm}(1))$ is generated by 
\[
h_1, ..., h_{\TheStrand-1}, \twsP \; \text{ and } \; \bar{\twsC}. 
\]
\item \label{cor:gen_sets_2fact} The orbifold braid group $\Z_\TheStrand(\Sigma_{\freeprodtwo})$ is generated by 
\[
h_1, ..., h_{\TheStrand-1}, \twsC \; \text{ and } \; \twsC'. 
\]
\end{enumerate}
\end{corollary}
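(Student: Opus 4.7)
The plan is to deduce each statement directly from Theorem \ref{thm:pres_Z_n} by observing that the alternative generators differ from the original ones only by conjugation with products of the braid generators $h_1,\dots,h_{\TheStrand-1}$, so the two generating sets generate the same subgroup.

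First, I would unpack what each orbifold is in the notation of Theorem \ref{thm:pres_Z_n}. In case \ref{cor:gen_sets_1fact}, the orbifold $D_{\cycm}$ corresponds to $\Sigma_\freeprod(\ThePct)$ with $\freeprod=\cycm$ (so $\TheCone=1$) and $\ThePct=0$, so Theorem \ref{thm:pres_Z_n} gives the generating set $h_1,\dots,h_{\TheStrand-1},\twC{1}$. Since by definition $\twsC:=\twC{1}$, there is nothing further to check. In case \ref{cor:gen_sets_1fact_pct}, we have $\TheCone=1$ and $\ThePct=1$, so the generating set is $h_1,\dots,h_{\TheStrand-1},\twP{1},\twC{1}$; by definition $\twsP:=\twP{1}$ and $\bar{\twsC}:=h_{\TheStrand-1}^{-1}\cdots h_1^{-1}\twC{1}h_1\cdots h_{\TheStrand-1}$. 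In case \ref{cor:gen_sets_2fact}, we have $\TheCone=2$ and $\ThePct=0$, giving $h_1,\dots,h_{\TheStrand-1},\twC{1},\twC{2}$, with $\twsC:=\twC{1}$ and $\twsC':=h_{\TheStrand-1}^{-1}\cdots h_1^{-1}\twC{2}h_1\cdots h_{\TheStrand-1}$.

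The main point is then to observe that conjugation by $h_1\cdots h_{\TheStrand-1}$ is invertible inside the generated subgroup: in cases \ref{cor:gen_sets_1fact_pct} and \ref{cor:gen_sets_2fact} one recovers the "missing" original generator via
\[
\twC{1}=h_1\cdots h_{\TheStrand-1}\,\bar{\twsC}\,h_{\TheStrand-1}^{-1}\cdots h_1^{-1},\qquad \twC{2}=h_1\cdots h_{\TheStrand-1}\,\twsC'\,h_{\TheStrand-1}^{-1}\cdots h_1^{-1}.
\]
Thus the subgroup generated by the new set contains the full generating set from Theorem \ref{thm:pres_Z_n}, and conversely each new generator is visibly a word in the old generators, so the two subgroups coincide with $\Z_\TheStrand(\Sigma_\freeprod(\ThePct))$.

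There is no real obstacle here; the statement is a purely formal consequence of Theorem \ref{thm:pres_Z_n} together with the defining formulas for $\twsC,\twsP,\bar{\twsC}$ and $\twsC'$. The only care needed is in bookkeeping which cyclic factor each $\twC{\nu}$ corresponds to, and in matching the orbifold $D_{\cycm}(1)$, $D_{\cycm}$, $\Sigma_{\freeprodtwo}$ with the correct values of $(\TheCone,\ThePct)$ so that Theorem \ref{thm:pres_Z_n} applies. The geometric descriptions in Figures depicting $\twsC$, $\bar{\twsC}$, $\twsP$ and $\twsC'$ then serve as a sanity check that these braids really are conjugates of $\twC{1},\twC{2}$ by the stated products of half-twists.
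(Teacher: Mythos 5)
Your proposal is correct and matches the paper's (implicit) argument: the corollary is stated as an immediate consequence of the generating set from Theorem \ref{thm:pres_Z_n} together with the definitions $\twsC=\twC{1}$, $\twsP=\twP{1}$, $\bar{\twsC}=\twistC_{\TheStrand1}$ and $\twsC'=\twistC_{\TheStrand2}$ as conjugates of $\twC{1},\twC{2}$ by $h_1\cdots h_{\TheStrand-1}$, which is exactly the substitution-and-invert-the-conjugation argument you give.
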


\subsection{Artin groups and complex braid groups}

\textit{Artin groups} and \textit{complex braid groups} are both generalizations of the Artin braid groups $\B_\TheStrand$. While Artin groups generalize them on a combinatorial level, complex braid groups define a geometric analog of $\B_\TheStrand$. 

Artin groups are defined by the following data:  

\begin{definition}[Weighted graphs and Artin groups]
Let $\Delta$ be a finite graph with vertex set $V=\{v_1,...,v_k\}$ and a set of edges $E$. The tuple $(\Delta,W)$ is called a \textit{weighted graph} if $W$ is a subset of $\NN_{\geq3}\cup\{\infty\}$ with a map $E\rightarrow W,\{v_i,v_j\}\mapsto w_{ij}$. The number $w_{ij}$ is called the \textit{weight} of the edge $\{v_i,v_j\}$. We attach a group $A(\Delta,W)$ to the weighted graph $(\Delta,W)$ that is presented by 
\[
\langle v_1,...,v_k\mid \langle v_i,v_j\rangle_{w_{ij}}=\langle v_j,v_i\rangle_{w_{ij}} \text{ for } w_{ij}<\infty \text{ and } [v,w]=1 \text{ if } \{v,w\}\not\in E\rangle. 
\]
This group is called the \textit{Artin group} associated to the weighted graph $(\Delta,W)$. If we add the relation $v_i^2=1$ for each $1\leq i\leq k$, we obtain the associated \textit{Coxeter group}. An Artin group or a Coxeter group is called \textit{irreducible} if the underlying graph $\Delta$ is connected. In particular, in the chosen setting generators from different connected components of $\Delta$ commute. 
\end{definition}

Even if the definition of Artin groups and Coxeter groups are similar, the current state of research about them differs greatly: Coxeter groups are well-understood, see \cite{Humphreys1990} for an introduction. In contrast, only little is known about general Artin groups. In particular, the following basic questions remain open: 
\begin{itemize}
\item Is each Artin group torsion free? 
\item What is the center of an Artin group?
\item Does each Artin group have a solvable word problem? 
\item What can we say about the cohomology of an Artin group? 
\end{itemize}
An approach to solve these questions is to identify classes in the set of Artin groups that share a certain property. In particular, the following classes gained importance: 
\begin{itemize}
\item \textit{right-angled Artin groups} (also known as \textit{graph groups}), where all weights are $\infty$, 
\item \textit{spherical Artin groups}, where the associated Coxeter group is finite and 
\item \textit{affine Artin groups}, where the associated Coxeter group is affine, i.e.\ the Coxeter group is isomorphic to a group of affine reflections in a Euclidean space. 
\end{itemize}

After almost 50 years of research the above questions were answered for these classes: 
\begin{table}[H]
\begin{tabular}{l|c|c|c} 
Artin groups that are... & spherical & right-angled  & affine \\ \hline \hline
...are torsion free. & \cite{BrieskornSaito1972},\cite{Deligne1972} & \cite{CrispGodelleWiest2009} & \cite{McCammondSulway2017} \\ \hline
...have cyclic or trivial center. & \cite{BrieskornSaito1972},\cite{Deligne1972} & \cite{Servatius1989} & \cite{McCammondSulway2017} \\ \hline
...have solvable word problem. & \cite{BrieskornSaito1972},\cite{Deligne1972} & \cite{CrispGodelleWiest2009} & \cite{McCammondSulway2017} \\ \hline
...satisfy the $K(\pi,1)$ conjecture. & \cite{Deligne1972} & \cite{CrispGodelleWiest2009} & \cite{PaoliniSalvetti2020}
\end{tabular}
\end{table}
For further information about the research on the four basic questions from above, we refer to \cite{GodelleParis2012}. 

Let us discuss some examples of Artin groups. Therefore, we illustrate a graph by drawing a node for each vertex and a connecting arc between two nodes $v_i$ and $v_j$ if there exists an edge $\{v_i,v_j\}\in E$. The weights are denoted next to the edges using the convention that weights $3$ are omitted. 

Historically, the first example of an Artin group was the Artin braid group $\B_\TheStrand$. This group is associated to the weighted graph in Figure~\ref{fig:weighted_graph_B_n}, see, for instance, \cite[Theorem 1.12]{KasselTuraev2008}. 
\begin{figure}[H]
\import{Grafiken/general_Allcock/}{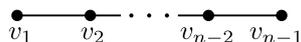}
\caption{Weighted graph of the Artin braid group $\B_\TheStrand$.}
\label{fig:weighted_graph_B_n}
\end{figure}
The associated Coxeter group is the symmetric group $\Sym_\TheStrand$. This implies that $\B_\TheStrand$ is a spherical Artin group. Moreover, the Artin groups associated to the weighted graphs from Figure \ref{fig:weighted_graph_spher_aff} belong to the above classes of Artin groups. The Artin groups for $D_\TheStrand$ and $I_2(\TheOrder)$ are spherical and the Artin groups for $\tilde{D}_\TheStrand$ and $\tilde{B}_\TheStrand$ are affine. These Artin groups will pop up in the following discussion of orbifold braid groups. 
\begin{figure}
\centerline{\import{Grafiken/general_Allcock/}{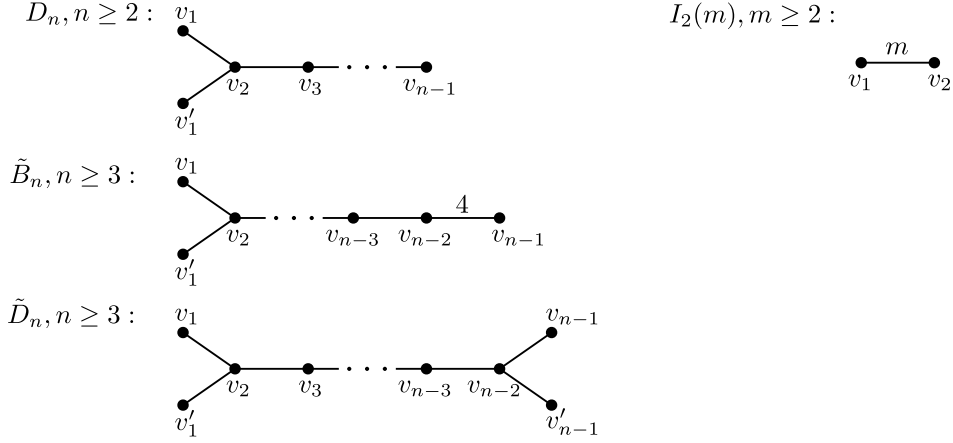}}
\caption{Further examples of weighted graphs.}
\label{fig:weighted_graph_spher_aff}
\end{figure}

Before we step into this discussion, we introduce \textit{complex braid groups} as the following geometric generalization of Artin braid groups: 

\begin{definition}[Complex reflection groups and complex braid groups]
\label{def:compl_refl_braid_grp}
Let $V$ be a finite dimensional complex vector space. A group $W$ is called a \textit{complex reflection group} if it is a finite subgroup of $\text{GL}(V)$ that is generated by a subset $R$ such that $H_r:=\ker(r-\id_V)$ is of codimension one for each $r\in R$. The fundamental group $\pi_1((V\setminus\bigcup_{r\in R}H_r)/W)$ is called the associated \textit{complex braid group}. 
\end{definition}

\begin{remark}
Although the name complex reflection group suggests it, we do not require that the elements in $R$ are reflections in a geometric sense. However, as in the case of Artin braid groups, we obtain a short exact sequence 
\[
1\rightarrow\pi_1((V\setminus\bigcup_{r\in R}H_r))\rightarrow\pi_1((V\setminus\bigcup_{r\in R}H_r)/W)\rightarrow W\rightarrow1. 
\]
\end{remark}

For an introduction to complex reflection groups, we refer to \cite[Chapter 1]{LehrerTaylor2009}. An introduction to complex braid groups is given in \cite[Chapter 4]{Broue2010}. 

In particular, the complex braid groups include the Artin braid group $\B_\TheStrand$: 

\begin{example}
For $V=\CC^\TheStrand$, the group $W=\Sym_\TheStrand$ acts on $\CC^\TheStrand$ by permutation of coordinates. A generating set of $W$ is given by the set of transpositions 
\[
R=\{(\sigma_{\Str\Strand})_{1\leq\Str,\Strand\leq\TheStrand,\Str<\Strand}\mid \sigma_{\Str\Strand}(\Str)=\Strand, \sigma_{\Str\Strand}(\Strand)=\Str \;\text{ and }\; \sigma_{\Str\Strand}(\NStrand)=\NStrand \;\text{ for }\; \NStrand\neq\Str,\Strand\}. 
\]
The kernel $\ker(\sigma_{\Str\Strand}-\id_{\CC^\TheStrand})$ coincides with $\{(x_1,\dots,x_\TheStrand)\in\CC^\TheStrand\mid x_\Str=x_\Strand\}$. Hence, the associated space $(\CC^n\setminus\bigcup_{1\leq\Str<\Strand\leq\TheStrand}H_{\sigma_{\Str\Strand}})/\Sym_\TheStrand$ coincides with the $\TheStrand$-th configuration space $\Conf_\TheStrand(\CC)$, i.e.\ the fundamental group of this space is the braid group $\B_\TheStrand$. 
\end{example}

Moreover, the complex braid groups include the following groups, see \cite[Section~2]{LehrerTaylor2009} for details. 

\begin{example}
Let $\TheOrder,p,\TheStrand$ be three integer parameters such that $p$ divides $\TheOrder$. We associate a group of complex $\TheStrand\times\TheStrand$ matrices such that 
\begin{itemize}
\item each matrix is \text{monomial}, i.e.\ each column and each row contains exactly one non-zero entry, 
\item for each $1\leq\Strand\leq\TheStrand$ the non-zero entry in the $\Strand$-th column is $\vartheta^{k_\Strand}$, where $\vartheta$ is a primitive $\TheOrder$-th root of unity and 
\item the sum of exponents $\sum_{\Strand=1}^\TheStrand k_\Strand\equiv0$ (mod $p$). 
\end{itemize}
We denote the group by $G(\TheOrder,p,\TheStrand)$. This group is a semidirect product such that the normal subgroup is a group of diagonal matrices with diagonal given by $(\vartheta^{k_1},...,\vartheta^{k_\TheStrand})$ and the quotient is $\Sym_\TheStrand$. The action of $\Sym_\TheStrand$ is given by permutation of coordinates. Consequently, the group is generated by the block diagonal matrices 
\[
M_{\vartheta^p}:=(\vartheta^p,1,...,1) \text{ for } p\neq\TheOrder \text{ \; and \; } M_{(12)}^\vartheta:=\left(\begin{pmatrix}
0 & \vartheta
\\
\vartheta^{-1} & 0
\end{pmatrix},1,...,1\right)  
\]
and the permutation matrices $\sigma_{\Str\Strand}$ for $1\leq\Str<\Strand\leq\TheStrand$. Since 
\begin{align*}
\ker(M_{\vartheta^p}-\id_{\CC^\TheStrand}) & =\{(0,x_2,...,x_\TheStrand)\mid x_\Strand\in\CC \text{ for } 2\leq\Strand\leq\TheStrand\} \text{ and }
\\
\ker(M_{(12)}^\vartheta-\id_{\CC^\TheStrand}) & =\{(\vartheta x_2,x_2,...,x_\TheStrand)\mid x_\Strand\in\CC \text{ for } 2\leq\Strand\leq\TheStrand\}, 
\end{align*}
these generators satisfy the condition from Definition \textup{\ref{def:compl_refl_braid_grp}}, i.e.\ this group is a complex reflection group. The complex braid group associated to $G(\TheOrder,p,\TheStrand)$ is denoted by $\B(\TheOrder,p,\TheStrand)$. 
\end{example}

For suitable parameters, we will describe a presentation of $\B(\TheOrder,p,\TheStrand)$. This will allow us to identify complex braid groups inside orbifold braid groups by their presentations. We use the following notation: 

\begin{definition}
Let $(\Delta,W)$ be a weighted graph. If a subgraph $\Delta'\leq\Delta$ with three vertices $v_r,v_s,v_t$ is complete with weights $\TheOrder_{rs}=\TheOrder_{rt}=3$, we may add a relation 
\[
(v_rv_sv_t)^2=(v_sv_tv_r)^2
\]
in the presentation of $A(\Delta,W)$. The quotient group of $A(\Delta,W)$ with such additional relations is denoted by $\tilde{A}(\Delta,W)$. If the weight of the edge $\{v_r,v_s\}$ is $\TheOrder$, the additional relation in the weighted graph is illustrated by two additional edges adjacent to $v_r$ pointing towards the edge $\{v_s,v_t\}$: 
\begin{center}
\begingroup%
  \makeatletter%
  \providecommand\color[2][]{%
    \errmessage{(Inkscape) Color is used for the text in Inkscape, but the package 'color.sty' is not loaded}%
    \renewcommand\color[2][]{}%
  }%
  \providecommand\transparent[1]{%
    \errmessage{(Inkscape) Transparency is used (non-zero) for the text in Inkscape, but the package 'transparent.sty' is not loaded}%
    \renewcommand\transparent[1]{}%
  }%
  \providecommand\rotatebox[2]{#2}%
  \newcommand*\fsize{\dimexpr\f@size pt\relax}%
  \newcommand*\lineheight[1]{\fontsize{\fsize}{#1\fsize}\selectfont}%
  \ifx\svgwidth\undefined%
    \setlength{\unitlength}{46.02101289bp}%
    \ifx\svgscale\undefined%
      \relax%
    \else%
      \setlength{\unitlength}{\unitlength * \real{\svgscale}}%
    \fi%
  \else%
    \setlength{\unitlength}{\svgwidth}%
  \fi%
  \global\let\svgwidth\undefined%
  \global\let\svgscale\undefined%
  \makeatother%
  \begin{picture}(1,0.98515234)%
    \lineheight{1}%
    \setlength\tabcolsep{0pt}%
    \put(0,0){\includegraphics[width=\unitlength,page=1]{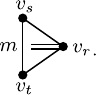}}%
  \end{picture}%
\endgroup%

\end{center}
\end{definition}

In particular, we are interested in the groups induced by the following weighted graphs with additional relations: 
\begin{center}
\centerline{
\begingroup%
  \makeatletter%
  \providecommand\color[2][]{%
    \errmessage{(Inkscape) Color is used for the text in Inkscape, but the package 'color.sty' is not loaded}%
    \renewcommand\color[2][]{}%
  }%
  \providecommand\transparent[1]{%
    \errmessage{(Inkscape) Transparency is used (non-zero) for the text in Inkscape, but the package 'transparent.sty' is not loaded}%
    \renewcommand\transparent[1]{}%
  }%
  \providecommand\rotatebox[2]{#2}%
  \newcommand*\fsize{\dimexpr\f@size pt\relax}%
  \newcommand*\lineheight[1]{\fontsize{\fsize}{#1\fsize}\selectfont}%
  \ifx\svgwidth\undefined%
    \setlength{\unitlength}{226.88217321bp}%
    \ifx\svgscale\undefined%
      \relax%
    \else%
      \setlength{\unitlength}{\unitlength * \real{\svgscale}}%
    \fi%
  \else%
    \setlength{\unitlength}{\svgwidth}%
  \fi%
  \global\let\svgwidth\undefined%
  \global\let\svgscale\undefined%
  \makeatother%
  \begin{picture}(1,0.74797522)%
    \lineheight{1}%
    \setlength\tabcolsep{0pt}%
    \put(0,0){\includegraphics[width=\unitlength,page=1]{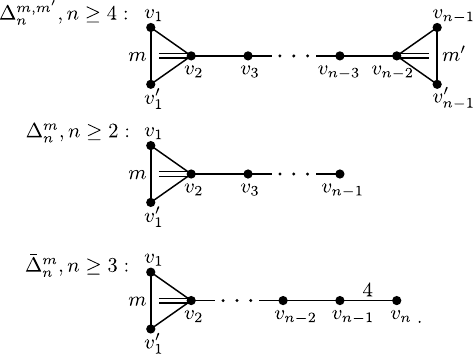}}%
  \end{picture}%
\endgroup%
}
\end{center}

We are interested in the associated groups $\tilde{A}(\Delta_\TheStrand^{\TheOrder,\TheOrder'}),\tilde{A}(\Delta_\TheStrand^\TheOrder)$ and $\tilde{A}(\bar{\Delta}_\TheStrand^\TheOrder)$ because they present complex braid groups for suitable $\TheStrand,\TheOrder,\TheOrder'\in\NN$. For $\TheStrand=2$, the group $\tilde{A}(\Delta_2^\TheOrder)$ coincides with the Artin group of type $I_2(\TheOrder)$. 

\begin{theorem}
The following isomorphisms are known for complex braid groups: 
\begin{align*}
\B(\TheOrder,\TheOrder,\TheStrand) & \cong \tilde{A}(\Delta_\TheStrand^\TheOrder) && \text{ for } \TheStrand\geq2, \text{ see \cite[Section 3.C]{BroueMalleRouquier1998},} 
\\
\B(3,3,\TheStrand) & \cong \tilde{A}(\Delta_\TheStrand^{3,3}) && \text{ for } \TheStrand\geq4, \text{ see \cite[Table I.A]{Malle1996},}  
\\ 
\B(6,3,\TheStrand)_1 & \cong \tilde{A}(\bar{\Delta}_\TheStrand^3) && \text{ for } \TheStrand\geq3, \text{ see  \cite[Table I.A]{Malle1996},}  
\\ 
\B(6,6,\TheStrand) & \cong \tilde{A}(\Delta_\TheStrand^{3,2}) && \text{ for } \TheStrand\geq4, \text{ see \cite[Table I.A]{Malle1996},} 
\\ 
\B(4,2,\TheStrand)_1^\ast & \cong \tilde{A}(\bar{\Delta}_\TheStrand^4) && \text{ for } \TheStrand\geq3, \text{ see  \cite[Table I.B]{Malle1996},}  
\\  
\B(4,4,\TheStrand) & \cong \tilde{A}(\Delta_\TheStrand^{4,2}) && \text{ for } \TheStrand\geq4, \text{ see \cite[Table I.B]{Malle1996}.} 
\end{align*}
Here above, $\B(6,3,\TheStrand)_1$ is the complex braid group associated to $G(6,3,\TheStrand)_1$, i.e.\ a subgroup of $G(6,1,\TheStrand)$, see \cite[Lemma 3.3]{Malle1996} for details, and $\B(4,2,\TheStrand)_1^\ast$ is the complex braid group associated to $G(4,2,\TheStrand)_1^\ast$, i.e.\ a subgroup of an extension of $G(4,1,\TheStrand)$ by a suitable lattice, see \cite[p.\ 263, Lemma 3.2]{Malle1996} for details. 
\end{theorem}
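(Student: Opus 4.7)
The plan is to treat the statement as a translation of known presentations into the distinguished-vertex language introduced above. Each of the six isomorphisms is asserted in \cite{BroueMalleRouquier1998} or \cite{Malle1996}, where the complex braid groups $\B(\TheOrder,\TheOrder,\TheStrand)$, $\B(3,3,\TheStrand)$, $\B(6,3,\TheStrand)_1$, $\B(6,6,\TheStrand)$, $\B(4,2,\TheStrand)_1^\ast$ and $\B(4,4,\TheStrand)$ are presented by Coxeter-like diagrams together with some extra relations. The overall strategy is to recall, case by case, the generators and relations supplied by the cited sources and to match them with our groups $\tilde{A}(\Delta_\TheStrand^\TheOrder)$, $\tilde{A}(\Delta_\TheStrand^{\TheOrder,\TheOrder'})$ and $\tilde{A}(\bar{\Delta}_\TheStrand^\TheOrder)$.

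First I would handle the diagonal case $\B(\TheOrder,\TheOrder,\TheStrand)\cong\tilde{A}(\Delta_\TheStrand^\TheOrder)$: this is exactly the presentation given in \cite[Section 3.C]{BroueMalleRouquier1998}, whose generators correspond to the $\TheStrand-1$ simple Artin-braid generators together with one extra generator that forms a triangle of weights $(\TheOrder,3,3)$ with its two neighbours. After relabelling, the classical braid relations of Theorem \ref{thm:pres_Z_n}\ref{cor:pres_orb_braid_free_prod_rel2} and the commutation relations for non-adjacent generators recover the linear portion of $\Delta_\TheStrand^\TheOrder$, while the additional relation of Broué--Malle--Rouquier translates literally into the distinguished-vertex relation $(v_rv_sv_t)^2=(v_sv_tv_r)^2$ decorating the triangle. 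The case $\TheStrand=2$ reduces to $A(I_2(\TheOrder))=\langle v_1,v_2\mid\langle v_1,v_2\rangle_\TheOrder=\langle v_2,v_1\rangle_\TheOrder\rangle$, which is $\B(\TheOrder,\TheOrder,2)$ by the same reference.

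For the five remaining isomorphisms I would read off the Coxeter-like diagrams from \cite[Tables I.A and I.B]{Malle1996}. Each entry there lists a set of generators for the corresponding complex braid group together with braid-like relations encoded by a diagram and, where necessary, an extra algebraic relation. The task is then bookkeeping: identify Malle's simple generators with our $v_1,\dots,v_{\TheStrand-1}$ and the two marked vertices at the end of the chain; verify that the weights on Malle's diagram agree with $(\TheOrder,\TheOrder')\in\{(3,3),(3,2),(4,2)\}$ for the $\tilde{A}(\Delta_\TheStrand^{\TheOrder,\TheOrder'})$ cases and with $\TheOrder\in\{3,4\}$ for the $\tilde{A}(\bar{\Delta}_\TheStrand^\TheOrder)$ cases; and confirm that the extra relation carried by the triangle in Malle's diagram matches our distinguished-vertex relation $(v_rv_sv_t)^2=(v_sv_tv_r)^2$.

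The hard part is purely notational: Malle draws the extra relation as a decorated edge rather than as our pair of arrows, and his generator conventions differ slightly from ours (he sometimes takes a different marked vertex for the $\bar{\Delta}$-diagrams, corresponding to the choice between $G(2\TheOrder,\TheOrder,\TheStrand)_1$ and its conjugate). I expect no new algebraic content to be needed, so once the dictionary is set up the six isomorphisms follow by direct comparison of defining relations. The statement in the paper is therefore best presented as a recollection result: I would give the proof as a short paragraph for each line, reducing it to the cited presentation and pointing out the identification of the triangle relation with our distinguished-vertex convention, rather than reproving any of the underlying complex-braid-group identifications.
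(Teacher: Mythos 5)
Your proposal matches the paper's treatment: the paper states this theorem without proof, relying entirely on the citations to Brou\'e--Malle--Rouquier and Malle, exactly as you propose to do, with the only work being the notational dictionary between the cited diagram presentations and the distinguished-vertex convention $\tilde{A}(\cdot)$. This is correct and essentially the same approach.
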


\subsection{Semidirect products}

\begin{definition}
\label{def:semidir_prod}
A group $G$ is a \textit{semidirect product} with \textit{normal subgroup}~$N$ and \textit{quotient} $H$ if there exists a short exact sequence 
\[
1\rightarrow N\xrightarrow{\iota}G\xrightarrow{\pi}H\rightarrow1
\]
that has a 
section $s:H\rightarrow G$. In this case, we denote $G=N\rtimes H$. 
\end{definition}

In the following, presentations of groups will be an important tool for us. In particular, presentations allow us to define group homomorphisms by assignments defined on generating sets. 

\begin{definition}
Let $G$ be a group with presentation 
\[
\langle X\mid R\rangle=\langle x_1,...,x_k\mid r_1=s_1,...,r_l=s_l\rangle 
\]
and $H$ a group generated by a set of elements $\{y_1,...,y_p\}$ with $p\geq k$. Moreover, let us assume that the words $r_j$ and $s_j$ are given by $x_{j_1}^{\varepsilon_1}...x_{j_q}^{\varepsilon_q}$ and $x_{\tilde{j}_1}^{\delta_1}...x_{\tilde{j}_r}^{\delta_r}$, respectively. Given \textit{assignments} $\phi:x_i\mapsto y_i$ for $1\leq i\leq k$, we apply them letterwise to words mapping $x_i^{-1}$ to $y_i^{-1}$. We say that the assignments $\phi$ \textit{preserve the relations in $R$} if the relation $y_{j_1}^{\varepsilon_1}...y_{j_q}^{\varepsilon_q}=y_{\tilde{j}_1}^{\delta_1}...y_{\tilde{j}_r}^{\delta_r}$ is valid in $H$ for each $1\leq j\leq l$. 
\end{definition}

\begin{theorem}[{von Dyck}, {\cite[p.\ 346]{Rotman2012}}]
\label{thm:von_Dyck}
Let $G$ be a group with presentation $\langle X\mid R\rangle$ as above and $H$ a group generated by $\{y_1,...,y_p\}$ with $p\geq k$. If the assignments 
\[
\phi:x_i\mapsto y_i \;\text{ for }\; 1\leq i\leq k
\]
preserve the relations in $R$, then these assignments induce a homomorphism 
\[
\phi:G\rightarrow H. 
\]
\end{theorem}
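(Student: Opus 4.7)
The plan is to derive the result from the universal property of free groups together with the standard description of a presented group as a quotient of a free group by the normal closure of its relators. Let $F(X)$ denote the free group on the set $X=\{x_1,\ldots,x_k\}$, and let $N\trianglelefteq F(X)$ be the normal closure of the set $\{r_j s_j^{-1}\mid 1\leq j\leq l\}$, so that $G=F(X)/N$ by the definition of the presentation $\langle X\mid R\rangle$.

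First I would invoke the universal property of the free group $F(X)$: the set map $X\to H$, $x_i\mapsto y_i$ for $1\leq i\leq k$, extends uniquely to a group homomorphism $\tilde{\phi}:F(X)\to H$. Since $\tilde{\phi}$ is a homomorphism, its value on a word $x_{j_1}^{\varepsilon_1}\cdots x_{j_q}^{\varepsilon_q}$ is precisely obtained by applying the assignment letterwise, i.e.\ $\tilde{\phi}(x_{j_1}^{\varepsilon_1}\cdots x_{j_q}^{\varepsilon_q})=y_{j_1}^{\varepsilon_1}\cdots y_{j_q}^{\varepsilon_q}$. This is exactly the letterwise extension that appears in the hypothesis on preservation of relations.

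Next I would check that $\tilde{\phi}$ vanishes on $N$. By the preservation hypothesis, for each $1\leq j\leq l$ the relation $y_{j_1}^{\varepsilon_1}\cdots y_{j_q}^{\varepsilon_q}=y_{\tilde{j}_1}^{\delta_1}\cdots y_{\tilde{j}_r}^{\delta_r}$ holds in $H$, which is to say $\tilde{\phi}(r_j)=\tilde{\phi}(s_j)$ and therefore $\tilde{\phi}(r_j s_j^{-1})=1$. Since the kernel of a homomorphism is a normal subgroup, it contains the normal closure of any subset of itself; hence $N\subseteq\ker(\tilde{\phi})$.

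Finally, by the universal property of quotient groups, $\tilde{\phi}$ descends through the canonical projection $F(X)\twoheadrightarrow F(X)/N=G$ to a well-defined homomorphism $\phi:G\to H$ with $\phi(x_i)=y_i$ for $1\leq i\leq k$, which is the desired conclusion. There is no real obstacle in this argument: the content lies entirely in unpacking the definitions of $F(X)$, of a presentation, and of the normal closure, and in noting that the letterwise assignment on words coincides with the homomorphic extension $\tilde{\phi}$; the only care needed is to verify that the preservation hypothesis indeed forces every relator $r_j s_j^{-1}$ (and not merely each $r_j$ separately) into $\ker(\tilde{\phi})$, which is immediate from $\tilde{\phi}(r_j)=\tilde{\phi}(s_j)$.
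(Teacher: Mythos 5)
Your argument is correct and is the standard proof of von Dyck's theorem (free group universal property, relators lie in the kernel, factor through the quotient), which is exactly the argument in the reference the paper cites; the paper itself gives no proof. No gaps.
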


\begin{lemma}[{\cite[Lemma 5.17]{Flechsig2023}}]
\label{lem:semidir_prod_pres}
Let $N$ and $H$ be groups given by presentations $N=\langle X\mid R\rangle$ and $H=\langle Y\mid S\rangle$. Then the following are equivalent: 
\begin{enumerate}
\item \label{lem:semidir_prod_pres_it1}
$G$ is a semidirect product with normal subgroup $N$ and quotient $H$. 
\item \label{lem:semidir_prod_pres_it2}
$G$ has a presentation 
\[
G=\langle X,Y\mid R,S,y^{\pm1}xy^{\mp1}=\phi_{y^{\pm1}}(x) \text{ for all } x\in X, y\in Y\rangle
\]
such that $\phi_{y^{\pm1}}(x)$ is a word in the alphabet~$X$ for all $x\in X$ and $y\in Y$. Moreover, for each $y\in Y$, the assignments 
\begin{equation}
\label{lem:semidir_prod_it2_cond_auto}
x\mapsto\phi_y(x)
\end{equation}
induce an automorphism $\phi_y\in\Aut(N)$ and the assignments 
\begin{equation}
\label{lem:semidir_prod_it2_cond_homo}
y\mapsto\phi_y 
\end{equation}
induce a homomorphism $H\rightarrow\Aut(N)$. 
\end{enumerate}
\end{lemma}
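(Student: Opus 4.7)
The plan is to prove both implications by comparing $G$ with the external semidirect product $G' := N \rtimes_\phi H$ built from a homomorphism $\phi\colon H\to\Aut(N)$. The core auxiliary fact I need is that $G'$ admits the presentation
\[
\langle X, Y \mid R,\, S,\; y^{\pm1} x y^{\mp1} = \phi_{y^{\pm1}}(x) \text{ for all } x\in X,\, y\in Y \rangle.
\]
This is a normal-form argument: using the conjugation relations any word in $X \cup Y$ can be brought to the form $w_N w_H$ with $w_N$ a word in $X$ and $w_H$ a word in $Y$, and such a normal form is unique because it matches the unique decomposition of an element of $N \rtimes_\phi H$ as a pair $(n, h)$.

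For \ref{lem:semidir_prod_pres_it1} $\Rightarrow$ \ref{lem:semidir_prod_pres_it2}, I would assume $G$ is a semidirect product with monomorphism $\iota$, projection $\pi$ and section $s$. After identifying $N$ with $\iota(N)\trianglelefteq G$ and $H$ with $s(H)\leq G$, every element of $G$ is uniquely expressible as $ns(h)$ with $n\in N$, $h\in H$. Since $\iota(N)$ is normal, conjugation by $s(y)$ restricts to an automorphism $\phi_y \in \Aut(N)$ for each $y\in Y$, and $y\mapsto\phi_y$ is a homomorphism $\phi\colon H \to \Aut(N)$ because $s$ is itself a homomorphism. The bijection $ns(h) \leftrightarrow (n, h)$ is then a group isomorphism $G \cong G'$, so the presentation of $G'$ from the auxiliary fact transfers verbatim to $G$.

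For \ref{lem:semidir_prod_pres_it2} $\Rightarrow$ \ref{lem:semidir_prod_pres_it1}, I would use the hypothesised homomorphism $\phi\colon H\to\Aut(N)$ to form $G' := N \rtimes_\phi H$. By the auxiliary fact, $G'$ has precisely the presentation written in \ref{lem:semidir_prod_pres_it2}, so Theorem~\ref{thm:von_Dyck} produces mutually inverse homomorphisms $G\to G'$ and $G'\to G$ sending generators to generators. Composing the resulting isomorphism $G\cong G'$ with the canonical short exact sequence $1\to N \to G' \to H \to 1$ and its obvious section $h\mapsto(1_N, h)$ endows $G$ with the claimed semidirect product structure.

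The main obstacle is the auxiliary presentation of $G'$, i.e.\ verifying that no relations beyond those listed are needed. Attaining the normal form uses only the conjugation relations; its uniqueness requires the defining relations $R$ and $S$ together with the hypothesis that the assignments $x\mapsto\phi_y(x)$ induce bona fide automorphisms of $N$ (so that pushing generators across $y$ preserves the relations of $N$ rather than merely being a set map). Once this is in place, both directions reduce to a clean application of von Dyck's theorem.
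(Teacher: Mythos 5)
Your proposal is correct: reducing both implications to the auxiliary presentation of the external semidirect product $N\rtimes_\phi H$ (normal form $w_Nw_H$, uniqueness via the map onto $N\rtimes_\phi H$) and then applying von Dyck's theorem is the standard argument for this lemma, and the paper itself gives no proof here but defers to \cite[Lemma 5.17]{Flechsig2023}, whose argument is of exactly this type. The only point worth making explicit is that the words $\phi_{y^{-1}}(x)$ must represent $\phi_y^{-1}(x)$ in $N$ (equivalently, $\phi_{y^{-1}}\circ\phi_y=\id_N$), which is implicit in the hypothesis that $y\mapsto\phi_y$ induces a homomorphism $H\rightarrow\Aut(N)$ and is how the paper uses the lemma in Remark \ref{rem:semidir_prod_pres}.
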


\begin{remark}
\label{rem:semidir_prod_pres}
Lemma \ref{lem:semidir_prod_pres} will be applied in the rest of the article several times. We will apply it given a group $G$ with a presentation to deduce that $G$ is a semidirect product. In this case, we want to show that the given presentation satisfies the conditions from Lemma \ref{lem:semidir_prod_pres_it2}. 

The base to prove that is to divide the generating set into two disjoint subsets $X$ and $Y$ such that $X$ generates the normal subgroup and $Y$ generates the quotient. 

Further, we divide the relations into three disjoint subsets $R, S$ and $C$ such that~$R$ contains all relations in letters from $X$, $S$ contains all relations in letters from $Y$ and $C$ contains all the remaining relations. In particular, the relations in $C$ should be given in the form $y^{\pm1}xy^{\mp1}=\phi_{y^{\pm1}}(x)$ for all $x\in X$ and $y\in Y$. To deduce a semidirect product structure, it remains to check that the relations from $C$ satisfy the conditions on the assignments \eqref{lem:semidir_prod_it2_cond_auto} and \eqref{lem:semidir_prod_it2_cond_homo}. 

It is reasonable to check these conditions in the following order: 

\begin{step}
\label{rem:semidir_prod_pres_step1}
Using Theorem \ref{thm:von_Dyck}, the first step will always be to check that the assignments $\phi:y\mapsto\phi_y$ from \eqref{lem:semidir_prod_it2_cond_homo} preserve the relations from $S$. If $S$ contains a relation $y_1^{\varepsilon_1}...y_q^{\varepsilon_q}=\tilde{y}_1^{\delta_1}...\tilde{y}_r^{\delta_r}$, this requires that the assignments
\[
x\mapsto\phi_{y_1^{\varepsilon_1}}\circ...\circ\phi_{y_q^{\varepsilon_q}}(x) \; \text{ and } \; x\mapsto\phi_{\tilde{y}_1^{\delta_1}}\circ...\circ\phi_{\tilde{y}_r^{\delta_r}}(x)
\]
coincide on each letter $x\in X$ (up to relations in $R$). 

In particular, we may check if $\phi$ induces a homomorphism independently of the fact if $x\mapsto\phi_y(x)$ induces an automorphism of the group presented by $\langle X\mid R\rangle$. 
\end{step}

\begin{step}
\label{rem:semidir_prod_pres_step2}
In the second step, we check that the assignments $\phi_y:x\mapsto\phi_y(x)$ induce an automorphism of the group presented by $\langle X\mid R\rangle$ for all $y\in Y$. 

To apply Theorem \ref{thm:von_Dyck}, we check if for all $y\in Y$ the assignments $\phi_y$ preserve the relations from $R$. If this is the case, the assignments $\phi_y$ induce an endomorphism of the group presented by $\langle X\mid R\rangle$. By the first step, we further have 
\[
\phi_{y^{-1}}\circ\phi_y=\id_{\langle X\mid R\rangle}=\phi_y\circ\phi_{y^{-1}}, 
\]
i.e.\ the endomorphism induced by $x\mapsto\phi_y(x)$ is bijective and therefore an automorphism of the group presented by $\langle X\mid R\rangle$. 
\end{step}
\end{remark}

\section{Relating orbifold braids and complex braid groups}

\label{sec:general_Allcock}
\renewcommand{\Twist}[2]{A_{{#1}{#2}}}
\renewcommand{\TwistP}[2]{B_{{#1}{#2}}}
\renewcommand{\TwistC}[2]{C_{{#1}{#2}}}
\renewcommand{\twist}[2]{a_{{#1}{#2}}}
\renewcommand{\twistP}[2]{b_{{#1}{#2}}}
\renewcommand{\twistC}[2]{c_{{#1}{#2}}}

In \cite{Allcock2002}, Allcock proved the following structural result on orbifold braid groups: 
\begin{align*}
\Z_\TheStrand(D_{\cyc{2}}) & = A(D_\TheStrand)\rtimes\cyc{2}, 
\\
\Z_\TheStrand(D_{\cyc{2}}(1)) & = A(\tilde{B}_\TheStrand)\rtimes\cyc{2} \text{ and }
\\
\Z_\TheStrand(\Sigma_{\cyc{2}\ast\cyc{2}}) & = A(\tilde{D}_\TheStrand)\rtimes(\cyc{2}\times\cyc{2}). 
\end{align*}
Allcock's proofs are based on results of Brieskorn \cite{Brieskorn1971} and Nguyen \cite{Nguyen1983} about orbit spaces of spherical and affine reflection groups, respectively. This restricts the approach to orbifold braid groups with cone points of order two such that the sum of the number of cone points and punctures is at most two. 

In contrast, the following characterizations are based on the presentation of the orbifold braid group $\Z_\TheStrand(\Sigma_\freeprod(\ThePct))$ given in Theorem \ref{thm:pres_Z_n}. This allows us to relax the condition on the order of the cone points. The restriction to the case where the sum of the number of cone points~$\TheCone$ and the number of punctures~$\ThePct$ is at most two reflects the geometry of the braids: the strands are ordered in a row with two ends and we can either attach a cone point or a puncture at each of the two ends. Thus, we have to keep this restriction.

For the rest of the chapter, let $\Z_\TheStrand(\Sigma_\freeprod(\ThePct))$ be an orbifold braid group with at most two cone points of order $\TheOrder:=\TheOrder_1$ and $\TheOrder':=\TheOrder_2$ such that $\TheCone+\ThePct\leq2$. 

Besides the presentation of $\Z_\TheStrand(\Sigma_\freeprod(\ThePct))$ from Theorem \ref{thm:pres_Z_n}, we recall the discussion from Section \ref{subsubsec:alt_gen_sets}. There we introduced the abbreviations 
\[
\twsP=\twP{1}, \; \twsC=\twC{1}, \; \twsC'=\twistC{\TheStrand}{2} \; \text{ and } \; \bar{\twsC}=\twistC{\TheStrand}{1} 
\]
for certain torsion elements. In terms of these and further elements, we gave alternative generating sets in the cases with one or two cone points (see Corollary~\ref{cor:gen_sets}): 
\begin{align*}
\Z_\TheStrand(D_{\cycm}) & = \langle h_1,...,h_{\TheStrand-1},\twsC\rangle, 
\\
\Z_\TheStrand(D_{\cycm}(1)) & = \langle h_1,...,h_{\TheStrand-1},\twsP,\bar{\twsC}\rangle \text{ and } 
\\
\Z_\TheStrand(\Sigma_{\freeprodtwo}) & = \langle h_1,...,h_{\TheStrand-1},\twsC,\twsC'\rangle. 
\end{align*}

For the following characterization, we add further redundant generators to these sets. In the first case, we add $\htwC:=\twsC h_1\twsC^{-1}$, in the second we add $h_{\TheStrand-1}^{\twsC}:=\bar{\twsC}h_{\TheStrand-1}\bar{\twsC}^{-1}$ and in the third case we adjoin $\htwC$ and $\htw:=\twsC'h_{\TheStrand-1}\twsC'^{-1}$. In particular, in each of the three groups the generators $h_1,...,h_{\TheStrand-1}$ and the additional generators $\htwC, h_{\TheStrand-1}^{\twsC}$ and $\htw$, respectively, satisfy the following \textit{braid and commutator relations}: 
\begin{itemize}
\item $h_\Str^\ast h_{\Str+1}^\times h_\Str^\ast=h_{\Str+1}^\times h_\Str^\ast h_{\Str+1}^\times$ \; for \; $1\leq\Str\leq\TheStrand-2$, 
\item $[h_\Strand^\ast,h_\NStrand^\times]=1$ \; for \; $1\leq\Strand,\NStrand<\TheStrand,\vert\Strand-\NStrand\vert\geq2$
\end{itemize}
where $\ast,\times$ stand either for $\twsC, \twsC'$ or no exponent. 

On the basis of the known presentation, we will determine another presentation in terms of the new generators. For each of the three groups, this presentation will satisfy the conditions from Lemma~\ref{lem:semidir_prod_pres_it2}. Particularly, we will obtain the structure of a semidirect product with the quotient generated by the torsion elements of the generating set, and the normal subgroup generated by $h_1,...,h_{\TheStrand-1}$ and their conjugates. In the end, it turns out that the normal subgroup in many cases carries the structure of an Artin group or a complex braid group. 

We begin with a technical lemma, which records additional relations in terms of the generators from Theorems \ref{thm:pres_Z_n} and \ref{thm:pres_pure_free_prod}. These additional relations will allow us to deduce a presentation of $\Z_\TheStrand(\Sigma_\freeprod(\ThePct))$ in terms of the generators from Corollary~\ref{cor:gen_sets} and the redundant generators introduced above (see Propositions~\ref{prop:sec_pres_free_prod_2fact} and \ref{prop:sec_pres_free_prod_subgrp_1fact}). 

The labels with letters R, S and C in the following lemma already allude to the role the relations will play with respect to the identification of the semidirect product structure: The relations with label R will be relations in the normal subgroup and the relations with label S will be relations in the quotient of the semidirect product. The remaining relations with label C will describe the conjugation of the quotient on the normal subgroup. 

In the following lemma, the notation $h_\Strand$ for $1\leq\Strand<\TheStrand$ refers to the orbifold braid defined by the first diagram in Figure \ref{fig:generators}. The generators $\twistC{\TheStrand}{1}$, $\twistC{\TheStrand}{2}$, $\twC{1}=\twistC{1}{1}$ and $\twP{1}=\twistP{1}{1}$ are defined with respect to \eqref{eq:def_a_kr} and \eqref{eq:def_a_kc}. The diagrams of the generators $\twistC{\NStrand}{\nu}$ and $\twistP{\NStrand}{\NPct}$ are given in Figure \ref{fig:pure_generators}. 

\begin{lemma}
\label{lem:psi_rel}
For $\TheStrand\geq3$, the relations from Theorems \textup{\ref{thm:pres_Z_n}} and \textup{\ref{thm:pres_pure_free_prod}} imply that the elements $\twC{1},\twP{1},h_1,...,h_{\TheStrand-1},\twistC{\TheStrand}{1}$ and $\twistC{\TheStrand}{2}$ in $\Z_\TheStrand(\Sigma_\freeprod(\ThePct))$ satisfy the following relations: 
\begin{enumerate}[label={\textup{(R\arabic*)}},ref={\thetheorem(R\arabic*)}]
\item \label{lem:psi_rel_R1}
$\langle h_1,\twC{1}h_1\twC{1}^{-1}\rangle_\TheOrder=\langle\twC{1}h_1\twC{1}^{-1},h_1\rangle_\TheOrder$, 
\\
where $\langle a,b\rangle_k$ denotes the word $aba...$ with $k$ letters, 
\item \label{lem:psi_rel_R2}
$\langle h_{\TheStrand-1},\twistC{\TheStrand}{1}h_{\TheStrand-1}\twistC{\TheStrand}{1}^{-1}\rangle_\TheOrder=\langle \twistC{\TheStrand}{1}h_{\TheStrand-1}\twistC{\TheStrand}{1}^{-1},h_{\TheStrand-1}\rangle_\TheOrder$ and 
\\
$\langle h_{\TheStrand-1},\twistC{\TheStrand}{2}h_{\TheStrand-1}\twistC{\TheStrand}{2}^{-1}\rangle_{\TheOrder'}=\langle \twistC{\TheStrand}{2}h_{\TheStrand-1}\twistC{\TheStrand}{2}^{-1},h_{\TheStrand-1}\rangle_{\TheOrder'}$, 
\item \label{lem:psi_rel_R3}
$(h_1\twC{1}h_1\twC{1}^{-1}h_2)^2=(h_2h_1\twC{1}h_1\twC{1}^{-1})^2$, 
\item \label{lem:psi_rel_R4}
$(h_{\TheStrand-1}\twistC{\TheStrand}{\nu}h_{\TheStrand-1}\twistC{\TheStrand}{\nu}^{-1}h_{\TheStrand-2})^2=(h_{\TheStrand-2}h_{\TheStrand-1}\twistC{\TheStrand}{\nu}h_{\TheStrand-1}\twistC{\TheStrand}{\nu}^{-1})^2$ \; for \;  $1\leq\nu\leq2$, 
\end{enumerate}
\begin{enumerate}[label={\textup{(S\arabic*)}},ref={\thetheorem(S\arabic*)}]
\item \label{lem:psi_rel_S1} 
$\twistC{\TheStrand}{1}^\TheOrder=1=\twistC{\TheStrand}{2}^{\TheOrder'}$, 
\item \label{lem:psi_rel_S2}
$[\twC{1},\twistC{\TheStrand}{2}]=1$, 
\end{enumerate}
\begin{enumerate}[label={\textup{(C\arabic*)}},ref={\thetheorem(C\arabic*)}]
\item \label{lem:psi_rel_C1}
$[h_\Strand,\twistC{\TheStrand}{\nu}]=1$ \; for \; $1\leq\Strand\leq\TheStrand-2$ \; and \; $1\leq\nu\leq2$, 
\item \label{lem:psi_rel_C2}
$[h_{\TheStrand-1}\twistC{\TheStrand}{\nu}h_{\TheStrand-1},\twistC{\TheStrand}{\nu}]=1$ \; for \; $1\leq\nu\leq2$ and 
\item \label{lem:psi_rel_C3}
$[\twP{1},\twistC{\TheStrand}{1}]=1$. 
\end{enumerate}
\end{lemma}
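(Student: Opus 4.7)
The plan is to verify each of the nine clauses by direct computation in $\Z_\TheStrand(\Sigma_\freeprod(\ThePct))$ using the presentations in Theorems~\ref{thm:pres_Z_n} and~\ref{thm:pres_pure_free_prod}. The easier relations come first. Relation (S1) is $\twistC{\TheStrand}{\nu}^{\TheOrder_\nu}=1$, which is the case $\NStrand=\TheStrand$ of \ref{cor:pres_pure_free_prod_rel1} (equivalently, conjugate $\twC{\nu}^{\TheOrder_\nu}=1$ by $h_1\cdots h_{\TheStrand-1}$). Relations (S2) and (C3) are instances of \ref{cor:pres_pure_free_prod_rel3} after identifying $\twistC{1}{1}=\twC{1}$ and $\twistP{1}{1}=\twP{1}$. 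For (C2), the key step is the rewrite $h_{\TheStrand-1}\twistC{\TheStrand}{\nu}h_{\TheStrand-1}=\twistC{\TheStrand-1}{\nu}\cdot\twist{\TheStrand}{\TheStrand-1}$, obtained from $\twistC{\TheStrand}{\nu}=h_{\TheStrand-1}^{-1}\twistC{\TheStrand-1}{\nu}h_{\TheStrand-1}$ and $h_{\TheStrand-1}^2=\twist{\TheStrand}{\TheStrand-1}$; then \ref{cor:pres_pure_free_prod_rel5} with $\Str=\TheStrand-1$, $\Strand=\TheStrand$ provides the desired commutation with $\twistC{\TheStrand}{\nu}$. For (C1), I would walk $h_\Strand$ through the defining word of $\twistC{\TheStrand}{\nu}=h_{\TheStrand-1}^{-1}\cdots h_1^{-1}\twC{\nu}h_1\cdots h_{\TheStrand-1}$, using the braid relations among the $h_j$'s together with $[\twC{\nu},h_j]=1$ for $j\geq 2$; the hypothesis $\Strand\leq\TheStrand-2$ is exactly what makes the local rearrangements around $h_{\Strand\pm 1}^{\pm 1}$ cancel.

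For (R1) and (R2), the plan is to apply a common argument. Set $(a,T)=(h_1,\twC{1})$ for (R1) and $(a,T)=(h_{\TheStrand-1},\twistC{\TheStrand}{\nu})$ for (R2); in both cases $T^\TheOrder=1$ and $(aT)^2=(Ta)^2$ hold (from \ref{cor:pres_orb_braid_free_prod_rel4} for (R1), and from (C2) rewritten as $[aTa,T]=1$ for (R2)). Let $z=(aT)^2=(Ta)^2$ and $b=TaT^{-1}$. The relation $aTaT=TaTa$ forces $z$ to commute with both $a$ and $T$, hence $z$ is central in $\langle a,T\rangle$. A direct computation yields $ab=zT^{-2}$ and $ba=a^{-1}(ab)a$, and using $T^\TheOrder=1$ with the centrality of $z$ one obtains $(ab)^{\TheOrder/2}=z^{\TheOrder/2}=(ba)^{\TheOrder/2}$ for $\TheOrder$ even and $(ab)^{(\TheOrder-1)/2}a=(ba)^{(\TheOrder-1)/2}b$ for $\TheOrder$ odd; in both cases $\langle a,b\rangle_\TheOrder=\langle b,a\rangle_\TheOrder$.

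The main obstacles are (R3) and (R4), for which I would proceed in two steps. Writing $a=h_1$, $s=\twC{1}$, $e=h_2$ and using $[s,e]=1$ together with the centrality $[s,asa]=1$ (a direct consequence of $sasa=asas$, i.e.\ \ref{cor:pres_orb_braid_free_prod_rel4}), both squares in (R3) factor out a common $s^{-2}$, so that (R3) reduces to the length-$4$ identity
\[
(asa\cdot e)^2 = (e\cdot asa)^2.
\]
To finish, I would expand each side, apply the braid relation $h_1 h_2 h_1=h_2 h_1 h_2$ to the middle occurrence of $h_1 h_2 h_1$, and use $[\twC{1},h_2]=1$ to shuffle the two $\twC{1}$-factors past the intervening $h_2$'s; both sides then collapse to the common expression $h_1 h_2 h_1\cdot\twC{1}h_1\twC{1}\cdot h_2 h_1$, establishing (R3). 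Relation (R4) is proved by the same argument applied at the other end of the strand sequence.
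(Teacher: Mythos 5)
Your proposal is correct in substance and reaches all nine relations, but it takes genuinely different routes to the two nontrivial families. For \ref{lem:psi_rel_R1} and \ref{lem:psi_rel_R2} the paper verifies $\langle a,b\rangle_\TheOrder=\langle b,a\rangle_\TheOrder$ by an explicit word manipulation split into the even and odd cases of $\TheOrder$ (displays \eqref{prop:sec_pres_free_prod_2fact_eq_even} and \eqref{prop:sec_pres_free_prod_2fact_eq_odd}); your argument via the central element $z=(aT)^2=(Ta)^2$ and the identity $ab=zT^{-2}$ is sound and arguably cleaner, since it isolates exactly why $T^\TheOrder=1$ enters. For \ref{lem:psi_rel_R3} the paper performs one long chain of rewritings (its computation \eqref{prop:sec_pres_free_prod_1fact_eq_add_rel}); your two-step reduction --- first factoring a central $\twC{1}^{-2}$ out of both squares using $[\twC{1},h_2]=1$ and $[h_1\twC{1}h_1,\twC{1}]=1$, so that \ref{lem:psi_rel_R3} becomes $(h_1\twC{1}h_1h_2)^2=(h_2h_1\twC{1}h_1)^2$ --- is shorter and better structured. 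The remaining clauses are handled essentially as in the paper: (S1), (S2), (C3) are instances of the pure-braid relations, (C2) reduces via $h_{\TheStrand-1}\twistC{\TheStrand}{\nu}h_{\TheStrand-1}=\twistC{\TheStrand-1,}{\nu}\twist{\TheStrand,}{\TheStrand-1}$ to relation \ref{cor:pres_pure_free_prod_rel5}, and (C1) is the same walking argument.

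One imprecision in your last step for \ref{lem:psi_rel_R3}: after applying the braid relation to the middle $h_1h_2h_1$ and commuting $\twC{1}$ past $h_2$, the right-hand side does collapse to $h_1h_2h_1\cdot\twC{1}h_1\twC{1}\cdot h_2h_1$, but the left-hand side only reaches $h_1h_2\cdot\twC{1}h_1\twC{1}h_1\cdot h_2h_1$; these agree only after one further application of $[h_1\twC{1}h_1,\twC{1}]=1$ from \ref{cor:pres_orb_braid_free_prod_rel4}. So the claim that both sides meet using only the braid relation and $[\twC{1},h_2]=1$ is not literally correct --- if it were, the identity would hold without relation \ref{cor:pres_orb_braid_free_prod_rel4} at all, which it does not. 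Since that relation is already in your toolkit and was used in the preceding reduction, the fix is immediate, and the same remark applies to \ref{lem:psi_rel_R4} with \ref{lem:psi_rel_C1} and \ref{lem:psi_rel_C2} playing the roles of \ref{cor:pres_orb_braid_free_prod_rel3} and \ref{cor:pres_orb_braid_free_prod_rel4}.
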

\begin{proof}
The relations \ref{lem:psi_rel_S1}, \ref{lem:psi_rel_S2} and \ref{lem:psi_rel_C3} only involve pure braids. The relation \ref{lem:psi_rel_S1} follows directly from \ref{cor:pres_pure_free_prod_rel1}, 
the relations \ref{lem:psi_rel_S2} and \ref{lem:psi_rel_C3} are covered by \ref{cor:pres_pure_free_prod_rel3}. 

Using the definition of $\twistC{\TheStrand}{\nu}$ from \eqref{eq:def_a_kc}, the relation \ref{lem:psi_rel_C2} is equivalent to 
\begin{align*}
& h_{\TheStrand-1}h_{\TheStrand-1}^{-1}
h_{\TheStrand-2}^{-1}...h_1^{-1}\twC{\nu}h_1...h_{\TheStrand-2}h_{\TheStrand-1}h_{\TheStrand-1}h_{\TheStrand-1}^{-1}h_{\TheStrand-2}^{-1}...h_1^{-1}\twC{\nu}h_1...h_{\TheStrand-1}
\\
= & h_{\TheStrand-1}^{-1}...h_1^{-1}\twC{\nu}h_1...h_{\TheStrand-1}h_{\TheStrand-1}h_{\TheStrand-1}^{-1}h_{\TheStrand-2}^{-1}...h_1^{-1}\twC{\nu}h_1...h_{\TheStrand-2}h_{\TheStrand-1}h_{\TheStrand-1}. 
\end{align*}
On the other hand, using $\twist{\TheStrand,}{\TheStrand-1}\stackrel{\eqref{eq:def_a_ji}}=h_{\TheStrand-1}^2$ and $\twistC{\Strand}{\nu}\stackrel{\eqref{eq:def_a_kc}}=h_{\Strand-1}^{-1}...h_1^{-1}\twC{\nu}h_1...h_{\Strand-1}$, the above relation also reads   
\[
\twistC{\TheStrand-1,}{\nu}\twist{\TheStrand,}{\TheStrand-1}\twistC{\TheStrand}{\nu}=\twistC{\TheStrand}{\nu}\twistC{\TheStrand-1,}{\nu}\twist{\TheStrand,}{\TheStrand-1}. 
\]
This is covered by relation \ref{cor:pres_pure_free_prod_rel5}. 

For $1\leq\Strand\leq\TheStrand-2$, the following observation yields relation~\ref{lem:psi_rel_C1}: 
\begin{align*}
h_\Strand \twistC{\TheStrand}{\nu} \mystackrel{\eqref{eq:def_a_kc}}= & \textcolor{\col}{h_\Strand h_{\TheStrand-1}^{-1}..}..h_1^{-1}\twC{\nu}h_1...h_{\TheStrand-1} & 
\\
\mystackrel{\ref{cor:pres_orb_braid_free_prod_rel2}}= & h_{\TheStrand-1}^{-1}...h_{\Strand+2}^{-1}\textcolor{\col}{h_\Strand h_{\Strand+1}^{-1}h_\Strand^{-1}}... h_1^{-1}\twC{\nu}h_1...h_{\TheStrand-1} & 
\\
\mystackrel{\ref{cor:pres_orb_braid_free_prod_rel2}}= & h_{\TheStrand-1}^{-1}...h_{\Strand+2}^{-1}h_{\Strand+1}^{-1}h_\Strand^{-1}\textcolor{\col}{h_{\Strand+1}h_{\Strand-1}^{-1}...h_1^{-1}\twC{\nu}h_1..}..h_{\TheStrand-1} & 
\\
\mystackrel{\ref{cor:pres_orb_braid_free_prod_rel2},\ref{cor:pres_orb_braid_free_prod_rel3}}= & h_{\TheStrand-1}^{-1}...h_1^{-1}\twC{\nu}h_1...h_{\Strand-1}\textcolor{\col}{h_{\Strand+1}h_\Strand h_{\Strand+1}}...h_{\TheStrand-1} & 
\\
\mystackrel{\ref{cor:pres_orb_braid_free_prod_rel2}}= & h_{\TheStrand-1}^{-1}...h_1^{-1}\twC{\nu}h_1...h_\Strand h_{\Strand+1}\textcolor{\col}{h_\Strand h_{\Strand+2}...h_{\TheStrand-1}} & 
\\
\mystackrel{\ref{cor:pres_orb_braid_free_prod_rel2}}= & h_{\TheStrand-1}^{-1}...h_1^{-1}\twC{\nu}h_1...h_{\TheStrand-1}h_\Strand & \mystackrel{\eqref{eq:def_a_kc}}= \twistC{\TheStrand}{\nu}h_\Strand. 
\end{align*}

Proving that the relation $\langle h_1,\twC{1}h_1\twC{1}^{-1}\rangle_\TheOrder=\langle\twC{1}h_1\twC{1}^{-1},h_1\rangle_\TheOrder$ from \ref{lem:psi_rel_R1} is satisfied, relies on $[h_1\twC{1}h_1,\twC{1}]=~1$ from relation $\ref{cor:pres_orb_braid_free_prod_rel4}$ and $\twC{1}^\TheOrder=1$ from $\ref{cor:pres_orb_braid_free_prod_rel1}$. 
For $\TheOrder=2l$, we have 
\begin{align}
\label{prop:sec_pres_free_prod_2fact_eq_even}
& \langle\twC{1} h_1\twC{1}^{-1},h_1\rangle_\TheOrder & \mystackrel{}= & (\twC{1}h_1\twC{1}^{-1}h_1)^l & \numbereq
\\
\mystackrel{}= & \twC{1}h_1\textcolor{\col}{(\twC{1}^{-1}h_1\twC{1}h_1)^{l-1}}\twC{1}^{-1}h_1 & \mystackrel{\ref{cor:pres_orb_braid_free_prod_rel4}}= & \twC{1}h_1(h_1\twC{1}h_1)^{l-1}\textcolor{\col}{\twC{1}^{-l}}h_1 & \nonumber
\\
\mystackrel{\ref{cor:pres_orb_braid_free_prod_rel1}}= & \twC{1}h_1\textcolor{\col}{(h_1\twC{1}h_1)^{l-1}\twC{1}^l}h_1 & \mystackrel{\ref{cor:pres_orb_braid_free_prod_rel4}}= & \twC{1}h_1(\twC{1}h_1\twC{1}h_1)^{l-1}\twC{1}h_1 & \nonumber
\\
\mystackrel{}= & (\textcolor{\col}{\twC{1}h_1\twC{1}h_1})^l & \mystackrel{\ref{cor:pres_orb_braid_free_prod_rel4}}= & (\textcolor{\col}{h_1\twC{1}h_1\twC{1}})^l & \nonumber
\\
\mystackrel{\ref{cor:pres_orb_braid_free_prod_rel4}}= & (h_1\twC{1}h_1)^l\textcolor{\col}{\twC{1}^l} & \mystackrel{\ref{cor:pres_orb_braid_free_prod_rel1}}= & (\textcolor{\col}{h_1\twC{1}h_1})^l \textcolor{\col}{\twC{1}}^{-l} & \nonumber
\\
\mystackrel{\ref{cor:pres_orb_braid_free_prod_rel4}}= & (h_1\twC{1}h_1\twC{1}^{-1})^l & \mystackrel{}= &  \langle h_1,\twC{1}h_1\twC{1}^{-1}\rangle_\TheOrder. \nonumber
\end{align}
For $\TheOrder=2l+1$, the same relations imply 
\begin{align}
\label{prop:sec_pres_free_prod_2fact_eq_odd}
& \langle\twC{1} h_1\twC{1}^{-1},h_1\rangle_\TheOrder & \mystackrel{}= & (\twC{1}h_1\twC{1}^{-1}h_1)^l\twC{1}h_1\twC{1}^{-1} & \numbereq
\\
\mystackrel{}= & \twC{1}h_1\textcolor{\col}{(\twC{1}^{-1}h_1\twC{1}h_1)^l}\twC{1}^{-1} & \mystackrel{\ref{cor:pres_orb_braid_free_prod_rel4}}= & \twC{1}h_1(h_1\twC{1}h_1)^l\textcolor{\col}{\twC{1}^{-(l+1)}} & \nonumber
\\
\mystackrel{\ref{cor:pres_orb_braid_free_prod_rel1}}= & \twC{1}h_1\textcolor{\col}{(h_1\twC{1}h_1)^l\twC{1}^l} & \mystackrel{\ref{cor:pres_orb_braid_free_prod_rel4}}= & \twC{1}h_1(\twC{1}h_1\twC{1}h_1)^l & \nonumber
\\
\mystackrel{}= & (\textcolor{\col}{\twC{1}h_1\twC{1}h_1})^l\twC{1}h_1 & \mystackrel{\ref{cor:pres_orb_braid_free_prod_rel4}}= & \textcolor{\col}{(h_1\twC{1}h_1\twC{1})^l}\twC{1}h_1 & \nonumber
\\
\mystackrel{\ref{cor:pres_orb_braid_free_prod_rel4}}= & (h_1\twC{1}h_1)^l\textcolor{\col}{\twC{1}^{l+1}}h_1 & \mystackrel{\ref{cor:pres_orb_braid_free_prod_rel1}}= & \textcolor{\col}{(h_1\twC{1}h_1)^l \twC{1}^{-l}}h_1 & \nonumber
\\
\mystackrel{\ref{cor:pres_orb_braid_free_prod_rel4}}= & (h_1\twC{1}h_1\twC{1}^{-1})^lh_1 & \mystackrel{}= &  \langle h_1,\twC{1}h_1\twC{1}^{-1}\rangle_\TheOrder. \nonumber
\end{align}
For the analogous relations from \ref{lem:psi_rel_R2}, we recall the relations \ref{lem:psi_rel_S1} and \ref{lem:psi_rel_C2}:  
\[
\twistC{\TheStrand}{1}^\TheOrder=1=\twistC{\TheStrand}{2}^{\TheOrder'} \; \text{ and } \; [h_{\TheStrand-1}\twistC{\TheStrand}{\nu}h_{\TheStrand-1},\twistC{\TheStrand}{\nu}]=1. 
\]
Hence, $\langle h_{\TheStrand-1},\twistC{\TheStrand}{1}h_{\TheStrand-1}\twistC{\TheStrand}{1}^{-1}\rangle_\TheOrder=\langle\twistC{\TheStrand}{1}h_{\TheStrand-1}\twistC{\TheStrand}{1}^{-1},h_{\TheStrand-1}\rangle_\TheOrder$ and $\langle h_{\TheStrand-1},\twistC{\TheStrand}{2}h_{\TheStrand-1}\twistC{\TheStrand}{2}^{-1}\rangle_{\TheOrder'}=\langle\twistC{\TheStrand}{2}h_{\TheStrand-1}\twistC{\TheStrand}{2}^{-1},h_{\TheStrand-1}\rangle_{\TheOrder'}$ from \ref{lem:psi_rel_R2} follow as in \eqref{prop:sec_pres_free_prod_2fact_eq_even} and \eqref{prop:sec_pres_free_prod_2fact_eq_odd}. 

For relation \ref{lem:psi_rel_R3}, we observe: 
\\
\begin{minipage}{1.25\textwidth}
\begin{align}
\label{prop:sec_pres_free_prod_1fact_eq_add_rel}
& \twC{1}h_1\textcolor{\col}{\twC{1}^{-1}h_2}h_1\twC{1} h_1\twC{1}^{-1}h_2 & \mystackrel{\ref{cor:pres_orb_braid_free_prod_rel3}}= & \twC{1}h_1h_2\textcolor{\col}{\twC{1}^{-1}h_1\twC{1}h_1}\twC{1}^{-1}h_2 \numbereq
\\
\mystackrel{\ref{cor:pres_orb_braid_free_prod_rel4}}= & \twC{1}h_1h_2h_1\twC{1}h_1\textcolor{\col}{\twC{1}^{-2}h_2} & \mystackrel{\ref{cor:pres_orb_braid_free_prod_rel3}}= & \twC{1}\textcolor{\col}{h_1h_2h_1}\twC{1} h_1h_2\twC{1}^{-2} \nonumber
\\
\mystackrel{\ref{cor:pres_orb_braid_free_prod_rel2}}= & \twC{1}h_2h_1\textcolor{\col}{h_2\twC{1}}h_1h_2\twC{1}^{-2} & \mystackrel{\ref{cor:pres_orb_braid_free_prod_rel3}}= & \twC{1}h_2h_1\twC{1}\textcolor{\col}{h_2h_1h_2}\twC{1}^{-2} \nonumber
\\
\mystackrel{\ref{cor:pres_orb_braid_free_prod_rel2}}= & \textcolor{\col}{\twC{1}h_2}h_1\twC{1}h_1h_2h_1\twC{1}^{-2} & \mystackrel{\ref{cor:pres_orb_braid_free_prod_rel3}}= & h_2\textcolor{\col}{\twC{1}h_1\twC{1}h_1}h_2h_1\twC{1}^{-2} \nonumber
\\
\mystackrel{\ref{cor:pres_orb_braid_free_prod_rel4}}= & h_2h_1\twC{1}h_1\textcolor{\col}{\twC{1}h_2}h_1\twC{1}^{-2} & \mystackrel{\ref{cor:pres_orb_braid_free_prod_rel3}}= & h_2h_1\twC{1}h_1\twC{1}^{-1}h_2\textcolor{\col}{\twC{1}^2h_1\twC{1}^{-2}} \nonumber
\\
\mystackrel{\ref{cor:pres_orb_braid_free_prod_rel4}}= & h_2h_1\twC{1}h_1\textcolor{\col}{\twC{1}^{-1}h_2\twC{1}}h_1^{-1}\twC{1}^{-1}h_1\twC{1}h_1\twC{1}^{-1} & \mystackrel{\ref{cor:pres_orb_braid_free_prod_rel3}}= & h_2h_1\twC{1}\textcolor{\col}{h_1h_2h_1^{-1}}\twC{1}^{-1}h_1\twC{1}h_1\twC{1}^{-1} \nonumber
\\
\mystackrel{\ref{cor:pres_orb_braid_free_prod_rel2}}= & h_2h_1\textcolor{\col}{\twC{1}h_2^{-1}}h_1\textcolor{\col}{h_2\twC{1}^{-1}}h_1\twC{1}h_1\twC{1}^{-1} & \mystackrel{\ref{cor:pres_orb_braid_free_prod_rel3}}= & \textcolor{\col}{h_2h_1h_2^{-1}}\twC{1}h_1\twC{1}^{-1}h_2h_1\twC{1} h_1\twC{1}^{-1} \nonumber
\\
\mystackrel{\ref{cor:pres_orb_braid_free_prod_rel2}}= & h_1^{-1}h_2h_1\twC{1}h_1\twC{1}^{-1}h_2h_1\twC{1} h_1\twC{1}^{-1}. \nonumber
\end{align} 
\end{minipage}
By left multiplication with $h_1$, we obtain \ref{lem:psi_rel_R3}. Using observations \ref{lem:psi_rel_C1} and \ref{lem:psi_rel_C2} instead of \ref{cor:pres_orb_braid_free_prod_rel3} and \ref{cor:pres_orb_braid_free_prod_rel4}, a verbatim calculation shows relation~\ref{lem:psi_rel_R4}. 
\end{proof}

\subsection*{Orbifold braid groups without punctures}

Lemma \ref{lem:psi_rel} allows us to identify another presentation of $\Z_\TheStrand(\Sigma_{\freeprodtwo})$ in terms of the generators $\htwC,h_1,...,h_{\TheStrand-1},\htw,\twsC,\twsC'$. By the same arguments, a similar presentation can be obtained for $\Z_\TheStrand(D_{\cycm})$ if we omit the generators $\htw$ and $\twsC'$. 

\begin{proposition}
\label{prop:sec_pres_free_prod_2fact}
For $\TheStrand\geq3$, the presentation of $\Z_\TheStrand(\Sigma_{\freeprodtwo})$ given in Theorem~\textup{\ref{thm:pres_Z_n}} is isomorphic to the group generated by $\htwC,h_1,...,h_{\TheStrand-1},\htw,\twsC,\twsC'$ with defining relations 
\begin{enumerate}[label={\textup{(R\arabic*)}},ref={\thetheorem(R\arabic*)}]
\item 
\label{prop:sec_pres_free_prod_2fact_relR1} 
braid and commutator relations for the generators $\htwC,h_1,...,h_{\TheStrand-1},\htw$, 
\item 
\label{prop:sec_pres_free_prod_2fact_relR2}
$\langle h_1,\htwC\rangle_\TheOrder=\langle\htwC,h_1\rangle_\TheOrder$ \; and \; $\langle h_{\TheStrand-1},\htw\rangle_{\TheOrder'}=\langle \htw,h_{\TheStrand-1}\rangle_{\TheOrder'}$, 
\item 
\label{prop:sec_pres_free_prod_2fact_relR3}
$(h_1\htwC h_2)^2=(h_2h_1\htwC)^2$ \; and \; $(h_{\TheStrand-1}\htw h_{\TheStrand-2})^2=(h_{\TheStrand-2}h_{\TheStrand-1}\htw)^2$, 
\end{enumerate}
\begin{enumerate}[label={\textup{(S\arabic*)}},ref={\thetheorem(S\arabic*)}]
\item 
\label{prop:sec_pres_free_prod_2fact_relS1} 
$\twsC^\TheOrder=(\twsC')^{\TheOrder'}=1$ \; and \; $[\twsC,\twsC']=1$, 
\end{enumerate}
\begin{enumerate}[label={\textup{(C\arabic*)}},ref={\thetheorem(C\arabic*)}]
\item 
\label{prop:sec_pres_free_prod_2fact_relC1}
$\twsC h_\NStrand\twsC^{-1}=h_\NStrand$ \; for \; $2\leq\NStrand<\TheStrand\;$ and $\;\twsC'h_\Strand \twsC'^{-1}=h_\Strand$ \; for \; $1\leq\Strand\leq\TheStrand-2$, 
\item 
\label{prop:sec_pres_free_prod_2fact_relC2}
$\twsC\htw\twsC^{-1}=\htw\;$ and $\;\twsC'\htwC\twsC'^{-1}=\htwC$, 
\item 
\label{prop:sec_pres_free_prod_2fact_relC3}
$\twsC h_1\twsC^{-1}=\htwC\;$ and $\;\twsC'h_{\TheStrand-1}\twsC'^{-1}=\htw$ and 
\item 
\label{prop:sec_pres_free_prod_2fact_relC4}
$\twsC \htwC\twsC^{-1}=\left(\htwC\right)^{-1}h_1\htwC\;$ and $\;\twsC'\htw\twsC'^{-1}=\left(\htw\right)^{-1}h_{\TheStrand-1}\htw$.  
\end{enumerate}
\end{proposition}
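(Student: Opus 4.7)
The plan is to carry out a Tietze transformation starting from the presentation of $\Z_\TheStrand(\Sigma_{\freeprodtwo})$ in Theorem~\ref{thm:pres_Z_n}. First I adjoin four redundant generators by the defining equations
\[
\twsC:=\twC{1},\quad \twsC':=\twistC{\TheStrand}{2},\quad \htwC:=\twsC h_1\twsC^{-1},\quad \htw:=\twsC' h_{\TheStrand-1}\twsC'^{-1},
\]
and then eliminate $\twC{1},\twC{2}$ via $\twC{1}=\twsC$ and $\twC{2}=h_1\cdots h_{\TheStrand-1}\,\twsC'\,h_{\TheStrand-1}^{-1}\cdots h_1^{-1}$. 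The proof then reduces to showing that the resulting relations are equivalent modulo further Tietze moves to the list (R1)--(C4).

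For the direction \textit{original $\Rightarrow$ new}, Lemma~\ref{lem:psi_rel} carries most of the weight. After rewriting $\twC{1}h_1\twC{1}^{-1}=\htwC$ and $\twistC{\TheStrand}{2}h_{\TheStrand-1}\twistC{\TheStrand}{2}^{-1}=\htw$, the lemma's clauses yield (R2) from \ref{lem:psi_rel_R1}--\ref{lem:psi_rel_R2}, (R3) from \ref{lem:psi_rel_R3}--\ref{lem:psi_rel_R4}, the non-trivial portions of (S1) from \ref{lem:psi_rel_S1}--\ref{lem:psi_rel_S2} (with $\twC{1}^\TheOrder=1$ coming from \ref{cor:pres_orb_braid_free_prod_rel1}), and (C1) from \ref{lem:psi_rel_C1} combined with \ref{cor:pres_orb_braid_free_prod_rel3}. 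Relation (C3) is a definition, (C2) follows from (C1) and (S1) by sliding $\twsC$ through the definition of $\htw$, and a short algebraic manipulation shows that each half of (C4) is equivalent to the corresponding instance of~\ref{cor:pres_orb_braid_free_prod_rel4}. The braid--commutator relations in (R1) involving $\htwC$ and $\htw$ follow from their counterparts among the $h_i$'s by conjugation; for example,
\[
\htwC h_2\htwC=\twsC h_1 h_2 h_1\twsC^{-1}=\twsC h_2 h_1 h_2\twsC^{-1}=h_2\htwC h_2,
\]
using the braid relation for $h_1,h_2$ together with $[\twsC,h_2]=1$ from (C1).

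For the converse direction I have to verify that each of \ref{cor:pres_orb_braid_free_prod_rel1}--\ref{cor:pres_orb_braid_free_prod_rel5} of Theorem~\ref{thm:pres_Z_n} holds in the group $G$ presented by (R1)--(C4) after the substitutions. The relations involving only $\twC{1}=\twsC$ and the $h_i$'s are immediate from (S1), (C1), (C4). For those involving $\twC{2}=H\twsC'H^{-1}$ with $H=h_1\cdots h_{\TheStrand-1}$, the classical braid identity $H h_\Strand H^{-1}=h_{\Strand+1}$ (valid for $1\le\Strand\le\TheStrand-2$) converts the $\twsC'$-commutations of (C1) into the required $\twC{2}$-commutations of \ref{cor:pres_orb_braid_free_prod_rel3}, and deals with \ref{cor:pres_orb_braid_free_prod_rel5} analogously. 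The main obstacle will be deriving \ref{cor:pres_orb_braid_free_prod_rel4} for $\twC{2}$, namely $[h_1\twC{2}h_1,\twC{2}]=1$: after simplifying via $h_1H=h_1^2\cdot h_2\cdots h_{\TheStrand-1}$ this reduces to the generalised braid-type identity $(h_1\twistC{2}{2})^2=(\twistC{2}{2}h_1)^2$ for $\twistC{2}{2}=h_2\cdots h_{\TheStrand-1}\,\twsC'\,h_{\TheStrand-1}^{-1}\cdots h_2^{-1}$. Transporting the $\twsC'$-half of (C4) (which encodes $[h_{\TheStrand-1}\twsC'h_{\TheStrand-1},\twsC']=1$) across the conjugation by $h_2\cdots h_{\TheStrand-1}$ and using the $h_i$-braid relations will close the argument; the derivation mirrors the proof of \ref{lem:psi_rel_R4} in Lemma~\ref{lem:psi_rel}, run in reverse.
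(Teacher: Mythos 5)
Your proposal takes essentially the same route as the paper: the Tietze-transformation framing is just a repackaging of the paper's construction of mutually inverse assignments $\varphi$ and $\psi$ verified via von Dyck's theorem, with Lemma~\ref{lem:psi_rel} carrying the direction from the old presentation to the new one and explicit braid computations carrying the converse. In particular, your identification of $[h_1\twC{2}h_1,\twC{2}]=1$ as the main obstacle, and your plan to reduce it to $[h_{\TheStrand-1}\twsC'h_{\TheStrand-1},\twsC']=1$ (extracted from \ref{prop:sec_pres_free_prod_2fact_relC3} and \ref{prop:sec_pres_free_prod_2fact_relC4}) transported across conjugation by $h_2\cdots h_{\TheStrand-1}$, matches the paper's iterative computation in \ref{prop:sec_pres_free_prod_2fact_par_h1_tau_h1_tau}.
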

\begin{proof}
Let $A$ be the group given by the presentation from Theorem \ref{thm:pres_Z_n} in the case $\TheCone=2$ and $\ThePct=0$. Moreover, let $B$ be the group given by the presentation above. With respect to the definition of $\twsC,\twsC',\htwC$ and $\htw$, we define the assignments: 
\begin{align*}
\label{prop:sec_pres_free_prod_2fact_eq_def_varphi_psi}
\varphi:A & \rightarrow B, & & \text{and} & \psi: B & \rightarrow A, \numbereq
\\
h_\Strand & \mapsto h_\Strand \;\text{ for }\; 1\leq\Strand<\TheStrand, & & & h_\Strand & \mapsto h_\Strand \;\text{ for }\; 1\leq\Strand<\TheStrand,
\\
\twC{1} & \mapsto \twsC, & & & \twsC & \mapsto \twC{1},
\\
\twC{2} & \mapsto h_1...h_{\TheStrand-1}\twsC'h_{\TheStrand-1}^{-1}...h_1^{-1} & & & \twsC' & \mapsto h_{\TheStrand-1}^{-1}...h_1^{-1}\twC{2}h_1...h_{\TheStrand-1}, 
\\
& & & & \htwC & \mapsto \twC{1}h_1\twC{1}^{-1}, 
\\
& & & & \htw & \mapsto \psi(\twsC')h_{\TheStrand-1}\psi(\twsC')^{-1}. 
\end{align*}
By \eqref{eq:def_a_kc}, the element $\psi(\twsC')$ coincides with $\twistC{\TheStrand}{2}$.  

The definitions of $\varphi$ and $\psi$ directly imply that the assignments $\psi\circ\varphi$ describe the identical map on the letters $h_\Strand,\twC{1}$ and $\twC{2}$. Moreover, the assignments $\varphi\circ\psi$ yield the identical map on the generators of $B$ (modulo relations in $B$). 

To deduce that these assignments induce homomorphisms between the groups given by the presentations $A$ and $B$, we will apply Theorem \ref{thm:von_Dyck}. Therefore, it remains to check that $\varphi$ and $\psi$ preserve the relations of $A$ and $B$, respectively. For the map $\psi$, this follows from Lemma~\ref{lem:psi_rel}. We summarize this observation in Table~\ref{tab:rel_psi}.  

\renewcommand{\arraystretch}{1.75}
{\setlength\LTleft{-0.125\textwidth}
\begin{tabularx}{1.25\linewidth}{rl|l}
\caption{}
\label{tab:rel_psi} \\
& Relations from $B$ & Images under $\psi$ \\
\cline{2-3}
\noalign{\vskip\doublerulesep\vskip-\arrayrulewidth}
\cline{2-3}
\endfirsthead
& Relations from $B$ & Images under $\psi$ \\
\cline{2-3}
\noalign{\vskip\doublerulesep\vskip-\arrayrulewidth}
\cline{2-3}
\endhead
\multicolumn{3}{r}{\footnotesize(To be continued)}
\endfoot
\cline{2-3}
\noalign{\vskip\doublerulesep\vskip-\arrayrulewidth}
\cline{2-3}
\endlastfoot
\multirow{6}{3.5em}{\ref{prop:sec_pres_free_prod_2fact_relR1}$\left\{\rule{0cm}{3.3cm}\right.$}& \doubletable{braid and commutator relations \\ for $h_1,...,h_{\TheStrand-1}$} & \doubletable{braid and commutator relations \\ for $h_1,...,h_{\TheStrand-1}$ are covered by \ref{cor:pres_orb_braid_free_prod_rel2}} \\ \cline{2-3}
& $[\htwC,h_\NStrand]=1$ \; for \; $\NStrand\geq3$ & $[\twC{1} h_1\twC{1}^{-1},h_\NStrand]=1$ by \ref{cor:pres_orb_braid_free_prod_rel2} and \ref{cor:pres_orb_braid_free_prod_rel3} \\ \cline{2-3}
& $[\htwC,\htw]=1$ & \doubletable{$[\twC{1}h_1\twC{1}^{-1},\twistC{\TheStrand}{2}h_{\TheStrand-1}\twistC{\TheStrand}{2}^{-1}]=1$ \\by \ref{cor:pres_orb_braid_free_prod_rel2}, \ref{cor:pres_orb_braid_free_prod_rel3}, \ref{lem:psi_rel_S2} and \ref{lem:psi_rel_C1}} \\ \cline{2-3}
& $\htwC h_2\htwC=h_2\htwC h_2$ & $\twC{1}h_1\twC{1}^{-1}h_2\twC{1}h_1\twC{1}^{-1}=h_2\twC{1}h_1\twC{1}^{-1}h_2$ by \ref{cor:pres_orb_braid_free_prod_rel2} and \ref{cor:pres_orb_braid_free_prod_rel3} \\ \cline{2-3} 
& $[\htw,h_\Strand]=1$ \; for \; $\Strand\leq\TheStrand-3$ & $[\twistC{\TheStrand}{2}h_{\TheStrand-1}\twistC{\TheStrand}{2}^{-1},h_\Strand]=1$ by \ref{cor:pres_orb_braid_free_prod_rel2} and \ref{lem:psi_rel_C1} \\ \cline{2-3}
& $\htw h_{\TheStrand-2}\htw=h_{\TheStrand-2}\htw h_{\TheStrand-2}$ & \doubletable{$\twistC{\TheStrand}{2}h_{\TheStrand-1}\twistC{\TheStrand}{2}^{-1}h_{\TheStrand-2}\twistC{\TheStrand}{2}h_{\TheStrand-1}\twistC{\TheStrand}{2}^{-1}$\\$=h_{\TheStrand-2}\twistC{\TheStrand}{2}h_{\TheStrand-1}\twistC{\TheStrand}{2}^{-1}h_{\TheStrand-2}$ by \ref{cor:pres_orb_braid_free_prod_rel2} and \ref{lem:psi_rel_C1}} \\ \cline{2-3} 
\multirow{2}{3.5em}{\ref{prop:sec_pres_free_prod_2fact_relR2}$\left\{\rule{0cm}{0.75cm}\right.$}& $\langle h_1,\htwC\rangle_\TheOrder=\langle\htwC,h_1\rangle_\TheOrder$ & $\langle h_1,\twC{1} h_1\twC{1}^{-1}\rangle_\TheOrder\stackrel{\text{\ref{lem:psi_rel_R1}}}=\langle\twC{1} h_1\twC{1}^{-1},h_1\rangle_\TheOrder$ \\ \cline{2-3}
& $\langle h_{\TheStrand-1},\htw\rangle_{\TheOrder'}=\langle\htw,h_{\TheStrand-1}\rangle_{\TheOrder'}$ & 
$\langle h_{\TheStrand-1},\twistC{\TheStrand}{2}h_{\TheStrand-1}\twistC{\TheStrand}{2}^{-1}\rangle_{\TheOrder'}\stackrel{\text{\ref{lem:psi_rel_R2}}}=\langle \twistC{\TheStrand}{2}h_{\TheStrand-1}\twistC{\TheStrand}{2}^{-1},h_{\TheStrand-1}\rangle_{\TheOrder'}$ \\ \cline{2-3} 
\multirow{2}{3.5em}{\ref{prop:sec_pres_free_prod_2fact_relR3}$\left\{\rule{0cm}{1.15cm}\right.$}& $(h_1\htwC h_2)^2=(h_2h_1\htwC)^2$ & $(h_1\twC{1} h_1\twC{1}^{-1}h_2)^2\stackrel{\text{\ref{lem:psi_rel_R3}}}=(h_2h_1\twC{1}h_1\twC{1}^{-1})^2$ \\ \cline{2-3}
& \doubletable{$(h_{\TheStrand-1}\htw h_{\TheStrand-2})^2$
\\
=\ $(h_{\TheStrand-2}h_{\TheStrand-1}\htw)^2$} & 
\doubletable{$(h_{\TheStrand-1}\twistC{\TheStrand}{2}h_{\TheStrand-1}\twistC{\TheStrand}{2}^{-1}h_{\TheStrand-2})^2$
\\
$\stackrel{\text{\ref{lem:psi_rel_R4}}}=(h_{\TheStrand-2}h_{\TheStrand-1}\twistC{\TheStrand}{2}h_{\TheStrand-1}\twistC{\TheStrand}{2}^{-1})^2$} \\ \cline{2-3}
\multirow{3}{3.5em}{\hspace*{-2pt} \ref{prop:sec_pres_free_prod_2fact_relS1}$\left\{\rule{0cm}{1.2cm}\right.$}& $\twsC^\TheOrder=1$ & $\twC{1}^\TheOrder\stackrel{\ref{cor:pres_orb_braid_free_prod_rel1}}=1$ \; with \; $\TheOrder=\TheOrder_1$ \\ \cline{2-3}
& $(\twsC')^{\TheOrder'}=1$ & $\twistC{\TheStrand}{2}^{\TheOrder'}\stackrel{\text{\ref{lem:psi_rel_S1}}}=1$ \; with \; $\TheOrder'=\TheOrder_2$ \\ \cline{2-3}
& $[\twsC,\twsC']=1$ & $[\twC{1},\twistC{\TheStrand}{2}]\stackrel{\text{\ref{lem:psi_rel_S2}}}=1$ \\ \cline{2-3}
\multirow{2}{3.5em}{\ref{prop:sec_pres_free_prod_2fact_relC1}$\left\{\rule{0cm}{0.75cm}\right.$}& $\twsC h_\NStrand\twsC^{-1}=h_\NStrand$ \; for \; $\NStrand\geq2$ & $\twC{1}h_\NStrand\twC{1}^{-1}=h_\NStrand$ by \ref{cor:pres_orb_braid_free_prod_rel3} \\ \cline{2-3}
& $\twsC'h_\Strand\twsC'^{-1}=h_\Strand$ \; for \; $\Strand\leq\TheStrand-2$ & $\twistC{\TheStrand}{2}h_\Strand\twistC{\TheStrand}{2}^{-1}=h_\Strand$ by \ref{lem:psi_rel_C1} \\ \cline{2-3} 
\multirow{2}{3.5em}{\ref{prop:sec_pres_free_prod_2fact_relC2}$\left\{\rule{0cm}{0.75cm}\right.$}& $\twsC\htw\twsC^{-1}=\htw$ & $\twC{1}\twistC{\TheStrand}{2}h_{\TheStrand-1}\twistC{\TheStrand}{2}^{-1}\twC{1}^{-1}=\twistC{\TheStrand}{2}h_{\TheStrand-1}\twistC{\TheStrand}{2}^{-1}$ by \ref{lem:psi_rel_S2} and \ref{cor:pres_orb_braid_free_prod_rel3} \\ \cline{2-3}
& $\twsC'\htwC\twsC'^{-1}=\htwC$ & $\twistC{\TheStrand}{2}\twC{1}h_1\twC{1}^{-1}\twistC{\TheStrand}{2}^{-1}=\twC{1}h_1\twC{1}^{-1}$ by \ref{lem:psi_rel_S2} and \ref{lem:psi_rel_C1} \\ \cline{2-3} 
\multirow{2}{3.5em}{\ref{prop:sec_pres_free_prod_2fact_relC3}$\left\{\rule{0cm}{0.7cm}\right.$}& $\twsC h_1\twsC^{-1}=\htwC$ & $\twC{1}h_1\twC{1}^{-1}=\twC{1}h_1\twC{1}^{-1}$ \\ \cline{2-3}
& $\twsC'h_{\TheStrand-1}\twsC'^{-1}=\htw$ & $\twistC{\TheStrand}{2}h_{\TheStrand-1}\twistC{\TheStrand}{2}^{-1}=\twistC{\TheStrand}{2}h_{\TheStrand-1}\twistC{\TheStrand}{2}^{-1}$ \\ \cline{2-3} 
\multirow{2}{3.5em}{\ref{prop:sec_pres_free_prod_2fact_relC4}$\left\{\rule{0cm}{0.75cm}\right.$}& $\twsC\htwC\twsC^{-1}=(\htwC)^{-1}h_1\htwC$ & $\twC{1}^2h_1\twC{1}^{-2}=\twC{1}h_1^{-1}\twC{1}^{-1}h_1\twC{1}h_1\twC{1}^{-1}$ by \ref{cor:pres_orb_braid_free_prod_rel4}\\ \cline{2-3} 
& $\twsC'\htw\twsC'^{-1}=(\htw)^{-1}h_{\TheStrand-1}\htw$ & $\twistC{\TheStrand}{2}^2h_{\TheStrand-1}\twistC{\TheStrand}{2}^{-2}=\twistC{\TheStrand}{2}h_{\TheStrand-1}^{-1}\twistC{\TheStrand}{2}^{-1}h_{\TheStrand-1}\twistC{\TheStrand}{2}h_{\TheStrand-1}\twistC{\TheStrand}{2}^{-1}$ by \ref{lem:psi_rel_C2} 
\end{tabularx}
}

Table \ref{tab:rel_varphi} summarizes which relations are necessary to deduce that $\varphi$ preserves the relations from $A$. For the most of the relations, this follows directly from $B$; for the less obvious relations, we refer to the explanations below Table \ref{tab:rel_varphi}. 

\renewcommand{\arraystretch}{1.75}
{\setlength\LTleft{-0.125\textwidth}
\begin{tabularx}{1.25\linewidth}{rl|l}
\caption{}
\label{tab:rel_varphi} \\ 
& Relations from $A$ & Images under $\varphi$ \\
\cline{2-3}
\noalign{\vskip\doublerulesep\vskip-\arrayrulewidth}
\cline{2-3}
\endfirsthead
& Relations from $A$ & Images under $\varphi$ \\
\cline{2-3}
\noalign{\vskip\doublerulesep\vskip-\arrayrulewidth}
\cline{2-3}
\endhead
\multicolumn{3}{r}{\footnotesize(To be continued)}
\endfoot
\cline{2-3}
\noalign{\vskip\doublerulesep\vskip-\arrayrulewidth}
\cline{2-3}
\endlastfoot
\multirow{2}{3em}{\ref{cor:pres_orb_braid_free_prod_rel1}$\left\{\rule{0cm}{0.75cm}\right.$}& $\twC{1}^\TheOrder=1$ & $\twsC^\TheOrder\stackrel{\textup{\ref{prop:sec_pres_free_prod_2fact_relS1}}}=1$ \\ \cline{2-3}
& $\twC{2}^{\TheOrder'}=1$ & $(h_1...h_{\TheStrand-1}\twsC'h_{\TheStrand-1}^{-1}...h_1^{-1})^{\TheOrder'}=1$ by \ref{prop:sec_pres_free_prod_2fact_relS1} \\ \cline{2-3}
\multirow{1}{3em}{\ref{cor:pres_orb_braid_free_prod_rel2}$\left\{\rule{0cm}{0.65cm}\right.$}& \doubletable{braid and commutator relations \\ for $h_1,...,h_{\TheStrand-1}$} & \doubletable{braid and commutator relations \\ for $h_1,...,h_{\TheStrand-1}$ are covered by \ref{prop:sec_pres_free_prod_2fact_relR1}} \\ \cline{2-3}
\multirow{2}{3em}{\ref{cor:pres_orb_braid_free_prod_rel3}$\left\{\rule{0cm}{0.7cm}\right.$} & $[\twC{1},h_\Strand]=1$ \; for \; $\Strand\geq2$ & $[\twsC_,h_\Strand]\stackrel{\textup{\ref{prop:sec_pres_free_prod_2fact_relC1}}}=1$ \\ \cline{2-3}
& $[\twC{2},h_\Strand]=1$ \; for \; $\Strand\geq2$ & $[\varphi(\twC{2}),h_\Strand]\stackrel{\ref{prop:sec_pres_free_prod_2fact_par_tau_hj}}=1$ \\ \cline{2-3}
\multirow{2}{3em}{\ref{cor:pres_orb_braid_free_prod_rel4}$\left\{\rule{0cm}{0.75cm}\right.$} & $[h_1\twC{1}h_1,\twC{1}]=1$ & $[h_1\twsC h_1,\twsC]\stackrel{\ref{prop:sec_pres_free_prod_2fact_par_th1th1=h1th1t}}=1$ \\ \cline{2-3}
& $[h_1\twC{2}h_1,\twC{2}]=1$ & $[h_1\varphi(\twC{2})h_1,\varphi(\twC{2})]\stackrel{\ref{prop:sec_pres_free_prod_2fact_par_h1_tau_h1_tau}}=1$ \\ \cline{2-3}
\multirow{1}{3em}{\ref{cor:pres_orb_braid_free_prod_rel5}$\left\{\rule{0cm}{0.433cm}\right.$}& $[\twC{1},h_1^{-1}\twC{2}h_1]=1$ & $[\twsC,h_2...h_{\TheStrand-1}\twsC'h_{\TheStrand-1}^{-1}...h_2^{-1}]=1$ by \ref{prop:sec_pres_free_prod_2fact_relS1} and \ref{prop:sec_pres_free_prod_2fact_relC1} 
\end{tabularx}
}

\begin{enumerate}[leftmargin=1.1cm,label={\textbf{\thetheorem.\arabic*.}},ref={\thetheorem.\arabic*}]
\item 
\label{prop:sec_pres_free_prod_2fact_par_tau_hj}
For $2\leq\Strand<\TheStrand$, we observe 
\vspace*{-4pt}
\end{enumerate}
\begin{align*}
h_\Strand\varphi(\twC{2})\mystackrel{\eqref{prop:sec_pres_free_prod_2fact_eq_def_varphi_psi}}= & \textcolor{\col}{h_\Strand h_1..}..h_{\TheStrand-1}\twsC'h_{\TheStrand-1}^{-1}...h_1^{-1} & 
\\
\mystackrel{\ref{prop:sec_pres_free_prod_2fact_relR1}}= & h_1...h_{\Strand-2}\textcolor{\col}{h_\Strand h_{\Strand-1}h_\Strand}h_{\Strand+1}...h_{\TheStrand-1}\twsC'h_{\TheStrand-1}^{-1}...h_1^{-1} & 
\\
\mystackrel{\ref{prop:sec_pres_free_prod_2fact_relR1}}= & h_1...h_{\Strand-2}h_{\Strand-1}h_\Strand \textcolor{\col}{h_{\Strand-1}h_{\Strand+1}...h_{\TheStrand-1}\twsC'h_{\TheStrand-1}^{-1}..}..h_1^{-1} & 
\\
\mystackrel{\ref{prop:sec_pres_free_prod_2fact_relR1},\ref{prop:sec_pres_free_prod_2fact_relC1}}= & h_1...h_{\TheStrand-1}\twsC'h_{\TheStrand-1}^{-1}...h_{\Strand+1}^{-1}\textcolor{\col}{h_{\Strand-1}h_\Strand^{-1}h_{\Strand-1}^{-1}}...h_1^{-1} & 
\\
\mystackrel{\ref{prop:sec_pres_free_prod_2fact_relR1}}= & h_1...
h_{\TheStrand-1}\twsC'h_{\TheStrand-1}^{-1}...h_{\Strand+1}^{-1}h_\Strand^{-1}h_{\Strand-1}^{-1}\textcolor{\col}{h_\Strand h_{\Strand-2}^{-1}...h_1^{-1}} & 
\\
\mystackrel{\ref{prop:sec_pres_free_prod_2fact_relR1}}= & h_1...h_{\TheStrand-1}\twsC'h_{\TheStrand-1}^{-1}...h_1^{-1}h_\Strand & \stackrel{\eqref{prop:sec_pres_free_prod_2fact_eq_def_varphi_psi}}= \varphi(\twC{2})h_\Strand, 
\end{align*}
i.e.\ the assignments $\varphi$ preserve $[\twC{2},h_\Strand]=1$ for $ \Strand\geq2$. 

\begin{enumerate}[resume,leftmargin=1.1cm,label={\textbf{\thetheorem.\arabic*.}},ref={\thetheorem.\arabic*}]
\item
\label{prop:sec_pres_free_prod_2fact_par_th1th1=h1th1t}
The following observation proves that $\varphi$ preserves the relation $[h_1\twC{1}h_1,\twC{1}]=~1$\vspace*{-4pt}
\end{enumerate}
\noindent from \ref{cor:pres_orb_braid_free_prod_rel4}: 
\begin{align*}
& \twsC\textcolor{\col}{h_1^\twsC}\twsC^{-1}\stackrel{\textup{\ref{prop:sec_pres_free_prod_2fact_relC4}}}=\textcolor{\col}{(h_1^\twsC)^{-1}}h_1\textcolor{\col}{h_1^\twsC}
\\
\mystackrel{\ref{prop:sec_pres_free_prod_2fact_relC3}}\Leftrightarrow &  \twsC^2 h_1\twsC^{-2}=\twsC h_1^{-1}\twsC^{-1}h_1\twsC h_1\twsC^{-1}. 
\end{align*}
By multiplication with $\twsC h_1\twsC^{-1}$ from the left and $\twsC^2$ from the right, we obtain $\twsC h_1\twsC h_1=h_1\twsC h_1\twsC$. 

\newpage 

\begin{enumerate}[resume,leftmargin=1.1cm,label={\textbf{\thetheorem.\arabic*.}},ref={\thetheorem.\arabic*}]
\item 
\label{prop:sec_pres_free_prod_2fact_par_h1_tau_h1_tau}
Similarly, we want to deduce that $\varphi$ preserves the relation $[h_1\twC{2}h_1,\twC{2}]=1$ 
\vspace*{-4pt}
\end{enumerate}
\noindent from \ref{cor:pres_orb_braid_free_prod_rel4}. Therefore, we observe that the relation $\twsC'\htw\twsC'^{-1}=(\htw)^{-1}h_{\TheStrand-1}\htw$ from \ref{prop:sec_pres_free_prod_2fact_relC4} as in \ref{prop:sec_pres_free_prod_2fact_par_th1th1=h1th1t} implies 
\begin{equation}
\label{prop:sec_pres_free_prod_2fact_eq_t'hnt'hn=hnt'hnt'}
[h_{\TheStrand-1}\twsC'h_{\TheStrand-1},\twsC']=1. 
\end{equation}
This allows us to prove that $\varphi$ preserves $[h_1\twC{2}h_1,\twC{2}]=1$. The proof requires two iterative arguments which we describe in the following: 
\begin{align*}
\label{prop:sec_pres_free_prod_2fact_eq1_h1_tau_h1_tau}
h_1\varphi(\twC{2})h_1\varphi(\twC{2})\mystackrel{(\ref{prop:sec_pres_free_prod_2fact_eq_def_varphi_psi})}= & \textcolor{\col}{h_1h_1}h_2...h_{\TheStrand-1}\twsC'h_{\TheStrand-1}^{-1}...\textcolor{\short}{h_1^{-1}h_1}h_1...h_{\TheStrand-1}\twsC'h_{\TheStrand-1}^{-1}...h_1^{-1} \numbereq 
\\
\mystackrel{}= & h_1^2h_2...h_{\TheStrand-1}\twsC'h_{\TheStrand-1}^{-1}...\textcolor{\col}{h_2^{-1}h_1h_2}...h_{\TheStrand-1}\twsC'h_{\TheStrand-1}^{-1}...h_1^{-1} 
\\
\mystackrel{\ref{prop:sec_pres_free_prod_2fact_relR1}}= & h_1^2h_2\textcolor{\col}{...h_{\TheStrand-1}\twsC'h_{\TheStrand-1}^{-1}...h_3^{-1}h_1}h_2\textcolor{\col}{h_1^{-1}h_3...h_{\TheStrand-1}\twsC'h_{\TheStrand-1}^{-1}..}..h_1^{-1} 
\\
\mystackrel{\ref{prop:sec_pres_free_prod_2fact_relR1},\ref{prop:sec_pres_free_prod_2fact_relC1}}= & h_1^{\textcolor{\col}{2}}\textcolor{\col}{h_2h_1}h_3...h_{\TheStrand-1}\twsC'h_{\TheStrand-1}^{-1}...h_3^{-1}h_2h_3...h_{\TheStrand-1}\twsC'h_{\TheStrand-1}^{-1}...h_3^{-1}\textcolor{\col}{h_1^{-1}h_2^{-1}h_1^{-1}} 
\\
\mystackrel{\ref{prop:sec_pres_free_prod_2fact_relR1}}= & h_1h_2h_1h_2h_3...h_{\TheStrand-1}\twsC'h_{\TheStrand-1}^{-1}...h_3^{-1}h_2h_3...h_{\TheStrand-1}\twsC'h_{\TheStrand-1}^{-1}...h_3^{-1}h_2^{-1}h_1^{-1}h_2^{-1}. 
\end{align*}
Comparing the second and the last line of (\ref{prop:sec_pres_free_prod_2fact_eq1_h1_tau_h1_tau}), we observe
\begin{align*}
\label{prop:sec_pres_free_prod_2fact_eq2_h1_tau_h1_tau}
& h_1h_1h_2...h_{\TheStrand-1}\twsC'h_{\TheStrand-1}^{-1}...h_2^{-1}h_1h_2...h_{\TheStrand-1}\twsC'h_{\TheStrand-1}^{-1}...h_1^{-1} \numbereq
\\
= \ & h_1h_2h_1h_2...h_{\TheStrand-1}\twsC'h_{\TheStrand-1}^{-1}...h_3^{-1}h_2h_3...h_{\TheStrand-1}\twsC'h_{\TheStrand-1}^{-1}...h_2^{-1}h_1^{-1}h_2^{-1}. 
\end{align*}
In particular, both lines have the form 
\[
h_1...h_\Strand h_1...h_{\TheStrand-1}\twsC'h_{\TheStrand-1}^{-1}...h_{\Strand+1}^{-1}h_\Strand h_{\Strand+1}...h_{\TheStrand-1}\twsC'h_{\TheStrand-1}^{-1}...h_1^{-1}h_\Strand^{-1}...h_2^{-1}. 
\]
Moreover, the relations $h_{\Strand+1}^{-1}h_\Strand h_{\Strand+1}=h_\Strand h_{\Strand+1}h_\Strand^{-1}$ and $[h_\Strand,h_\NStrand]=1$ for $\Strand+1<\NStrand<\TheStrand$ from \ref{prop:sec_pres_free_prod_2fact_relR1} and $[h_\Strand,\twsC']=1$ from \ref{prop:sec_pres_free_prod_2fact_relC1} used for $\Strand=1$ to deduce (\ref{prop:sec_pres_free_prod_2fact_eq2_h1_tau_h1_tau}) hold similarly for each $1\leq\Strand\leq\TheStrand-2$. Iteratively, this allows us to deduce that 
\begin{align*}
\label{prop:sec_pres_free_prod_2fact_eq3_h1_tau_h1_tau}
h_1\varphi(\twC{2})h_1\varphi(\twC{2})\mystackrel{(\ref{prop:sec_pres_free_prod_2fact_eq1_h1_tau_h1_tau})}= & h_1h_2h_1h_2...h_{\TheStrand-1}\twsC'h_{\TheStrand-1}^{-1}...h_3^{-1}h_2h_3...h_{\TheStrand-1}\twsC'h_{\TheStrand-1}^{-1}...h_2^{-1}h_1^{-1}h_2^{-1} \numbereq 
\\
\mystackrel{\textit{it.}}= & h_1...h_{\TheStrand-1}h_1...h_{\TheStrand-1}\twsC'h_{\TheStrand-1}\twsC'h_{\TheStrand-1}^{-1}...h_1^{-1}h_{\TheStrand-1}^{-1}...h_2^{-1}. 
\end{align*}
Further, we obtain 
\begin{align*}
\label{prop:sec_pres_free_prod_2fact_eq4_h1_tau_h1_tau}
h_1\varphi(\twC{2})h_1\varphi(\twC{2})\mystackrel{(\ref{prop:sec_pres_free_prod_2fact_eq3_h1_tau_h1_tau})}= & h_1...h_{\TheStrand-1}h_1...\textcolor{\col}{h_{\TheStrand-1}\twsC'h_{\TheStrand-1}\twsC'h_{\TheStrand-1}^{-1}}...h_1^{-1}h_{\TheStrand-1}^{-1}...h_2^{-1} \numbereq 
\\
\mystackrel{(\ref{prop:sec_pres_free_prod_2fact_eq_t'hnt'hn=hnt'hnt'})}= & h_1...h_{\TheStrand-1}\textcolor{\col}{h_1...h_{\TheStrand-2}\twsC'}h_{\TheStrand-1}\textcolor{\col}{\twsC'h_{\TheStrand-2}^{-1}...h_1^{-1}}h_{\TheStrand-1}^{-1}...h_2^{-1} 
\\
\mystackrel{\ref{prop:sec_pres_free_prod_2fact_relC1}}= & h_1...h_{\TheStrand-1}\twsC'h_1...\textcolor{\col}{h_{\TheStrand-2}h_{\TheStrand-1}h_{\TheStrand-2}^{-1}}...h_1^{-1}\twsC'h_{\TheStrand-1}^{-1}...h_2^{-1} 
\\
\mystackrel{\ref{prop:sec_pres_free_prod_2fact_relR1}}= & h_1...h_{\TheStrand-1}\twsC'\textcolor{\col}{h_1...h_{\TheStrand-3}h_{\TheStrand-1}^{-1}}h_{\TheStrand-2}\textcolor{\col}{h_{\TheStrand-1}h_{\TheStrand-3}^{-1}...h_1^{-1}}\twsC'h_{\TheStrand-1}^{-1}...h_2^{-1} 
\\
\mystackrel{\ref{prop:sec_pres_free_prod_2fact_relR1}}= & h_1...h_{\TheStrand-1}\twsC'h_{\TheStrand-1}^{-1}h_1...h_{\TheStrand-3}h_{\TheStrand-2}h_{\TheStrand-3}^{-1}...h_1^{-1}h_{\TheStrand-1}\twsC'h_{\TheStrand-1}^{-1}...h_2^{-1}.  
\end{align*}
Comparing the third and the last line of (\ref{prop:sec_pres_free_prod_2fact_eq4_h1_tau_h1_tau}), we observe
\begin{align*}
\label{prop:sec_pres_free_prod_2fact_eq5_h1_tau_h1_tau}
& h_1...h_{\TheStrand-1}\twsC'h_1...h_{\TheStrand-2}h_{\TheStrand-1}h_{\TheStrand-2}^{-1}...h_1^{-1}\twsC'h_{\TheStrand-1}^{-1}...h_2^{-1} \numbereq
\\
= \ & h_1...h_{\TheStrand-1}\twsC'h_{\TheStrand-1}^{-1}h_1...h_{\TheStrand-3}h_{\TheStrand-2}h_{\TheStrand-3}^{-1}...h_1^{-1}h_{\TheStrand-1}\twsC'h_{\TheStrand-1}^{-1}...h_2^{-1}. 
\end{align*}
In particular, both lines have the form 
\[
h_1...h_{\TheStrand-1}\twsC'h_{\TheStrand-1}^{-1}...h_{\Strand+1}^{-1}h_1...h_{\Strand-1}h_\Strand h_{\Strand-1}^{-1}...h_1^{-1}h_{\Strand+1}...h_{\TheStrand-1}\twsC'h_{\TheStrand-1}^{-1}...h_2^{-1}. 
\]
Moreover, the relations $h_{\Strand-1}h_\Strand h_{\Strand-1}^{-1}=h_\Strand^{-1}h_{\Strand-1}h_\Strand$ and $[h_\Str,h_\Strand]=1$ for $1\leq\Str<\Strand-1$ from \ref{prop:sec_pres_free_prod_2fact_relR1} used for $\Strand=\TheStrand-1$ to deduce (\ref{prop:sec_pres_free_prod_2fact_eq5_h1_tau_h1_tau}) hold similarly for every $2\leq\Strand<\TheStrand$. Iteratively, this allows us to deduce that 
\begin{align*}
h_1\varphi(\twC{2})h_1\varphi(\twC{2})\mystackrel{(\ref{prop:sec_pres_free_prod_2fact_eq4_h1_tau_h1_tau})}= & h_1...h_{\TheStrand-1}\twsC'h_{\TheStrand-1}^{-1}h_1...h_{\TheStrand-3}h_{\TheStrand-2}h_{\TheStrand-3}^{-1}...h_1^{-1}h_{\TheStrand-1}\twsC'h_{\TheStrand-1}^{-1}...h_2^{-1} 
\\
\mystackrel{\textit{it.}}= & h_1...h_{\TheStrand-1}\twsC'h_{\TheStrand-1}^{-1}...h_2^{-1}h_1h_2...h_{\TheStrand-1}\twsC'h_{\TheStrand-1}^{-1}...h_2^{-1} 
\\
\mystackrel{\eqref{prop:sec_pres_free_prod_2fact_eq_def_varphi_psi}}= & \varphi(\twC{2})h_1\varphi(\twC{2})h_1. 
\end{align*}

Consequently, by Theorem \ref{thm:von_Dyck}, the assignments $\varphi$ and $\psi$ induce inverse homomorphisms. Hence, the groups $A$ and $B$ are isomorphic, i.e.\ $\Z_\TheStrand(\Sigma_{\freeprodtwo})$ has the above presentation. 
\end{proof}

The presentation from Proposition \ref{prop:sec_pres_free_prod_2fact} allows us to deduce the semidirect product structure from Theorem \ref{thm-intro:general_Allcock}\ref{thm-intro:general_Allcock_it1}. For $\Z_\TheStrand(D_{\cycm})$, we can obtain the semidirect product structure from Theorem \ref{thm-intro:general_Allcock}\ref{thm-intro:general_Allcock_it2} in the same way, see Corollary \ref{cor:sec_pres_free_prod_1fact} for details. 

\begin{theorem}
\label{thm:semidir_prod_2fact}
For $\TheStrand\geq4$, the presentation from Proposition \textup{\ref{prop:sec_pres_free_prod_2fact}} implies that $\Z_\TheStrand(\Sigma_{\freeprodtwo})$ has a semidirect product structure 
\[
\tilde{A}(\Delta_\TheStrand^{\TheOrder,\TheOrder'})\rtimes(\cycm\times\cyc{\TheOrder'})
\]
with $\cycm\times\cyc{\TheOrder'}=\langle\twsC,\twsC'\rangle$ and $\tilde{A}(\Delta_\TheStrand^{\TheOrder,\TheOrder'})=\langle \htwC,h_1,...,h_{\TheStrand-1},\htw\rangle$. 

For $\TheOrder=\TheOrder'=2$ and $\TheStrand\geq3$, the presentation from Proposition \textup{\ref{prop:sec_pres_free_prod_2fact}} induces a semidirect product structure 
\[
A(\tilde{D}_\TheStrand)\rtimes(\cyc{2}\times\cyc{2})
\]
with $\cyc{2}\times\cyc{2}=\langle\twsC,\twsC'\rangle$ and $A(\tilde{D}_\TheStrand)=\langle \htwC,h_1,...,h_{\TheStrand-1},\htw\rangle$. 
\end{theorem}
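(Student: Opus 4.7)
The plan is to apply Lemma \ref{lem:semidir_prod_pres} to the presentation of $\Z_\TheStrand(\Sigma_{\freeprodtwo})$ supplied by Proposition \ref{prop:sec_pres_free_prod_2fact}, following the two-step roadmap of Remark \ref{rem:semidir_prod_pres}. The labels already encode the needed partition: relations \ref{prop:sec_pres_free_prod_2fact_relR1}--\ref{prop:sec_pres_free_prod_2fact_relR3} lie entirely in the subalphabet $X := \{\htwC, h_1, \dots, h_{\TheStrand-1}, \htw\}$ and will present the prospective normal factor $\tilde{A}(\Delta_\TheStrand^{\TheOrder, \TheOrder'})$; relation \ref{prop:sec_pres_free_prod_2fact_relS1} lies entirely in $Y := \{\twsC, \twsC'\}$ and presents the prospective quotient $\cycm \times \cyc{\TheOrder'}$; and the relations \ref{prop:sec_pres_free_prod_2fact_relC1}--\ref{prop:sec_pres_free_prod_2fact_relC4} are already written in the conjugation form $y^{\pm 1} x y^{\mp 1} = \phi_{y^{\pm 1}}(x)$ required by Lemma \ref{lem:semidir_prod_pres}, so they define assignments $\phi_{\twsC}, \phi_{\twsC'}$ on $X$. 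In particular $\phi_{\twsC}$ fixes $h_2, \dots, h_{\TheStrand-1}, \htw$ and acts on the pair $\{h_1, \htwC\}$ by $h_1 \mapsto \htwC$, $\htwC \mapsto \htwC^{-1} h_1 \htwC$; symmetrically for $\phi_{\twsC'}$ at the right end.

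Step 1 of Remark \ref{rem:semidir_prod_pres} asks that $y \mapsto \phi_y$ respect \ref{prop:sec_pres_free_prod_2fact_relS1}. The commutativity $\phi_{\twsC} \circ \phi_{\twsC'} = \phi_{\twsC'} \circ \phi_{\twsC}$ is automatic because the two moved sets $\{h_1, \htwC\}$ and $\{h_{\TheStrand-1}, \htw\}$ are disjoint as soon as $\TheStrand \geq 4$; the order identities $\phi_{\twsC}^{\TheOrder} = \id$ and $\phi_{\twsC'}^{\TheOrder'} = \id$ reduce to closure computations inside the two rank-2 subgroups generated by these pairs, driven by the $I_2$-type braid identity of \ref{prop:sec_pres_free_prod_2fact_relR2}. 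For Step 2 I must check that $\phi_{\twsC}$ and $\phi_{\twsC'}$ preserve \ref{prop:sec_pres_free_prod_2fact_relR1}--\ref{prop:sec_pres_free_prod_2fact_relR3}. Most of \ref{prop:sec_pres_free_prod_2fact_relR1} is immediate since $\phi_{\twsC}$ moves only $h_1, \htwC$, which play symmetric roles in R1, and \ref{prop:sec_pres_free_prod_2fact_relR2} is manifestly $\phi_{\twsC}$-symmetric. The main technical obstacle is \ref{prop:sec_pres_free_prod_2fact_relR3}; but here the substitution collapses cleanly, for applying $\phi_{\twsC}$ to the first identity yields
\[
(\htwC \cdot \htwC^{-1} h_1 \htwC \cdot h_2)^2 = (h_2 \cdot \htwC \cdot \htwC^{-1} h_1 \htwC)^2,
\]
which telescopes back to \ref{prop:sec_pres_free_prod_2fact_relR3} itself. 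A mirror argument handles $\phi_{\twsC'}$. Combining Steps 1 and 2, Lemma \ref{lem:semidir_prod_pres} delivers the decomposition $\tilde{A}(\Delta_\TheStrand^{\TheOrder, \TheOrder'}) \rtimes (\cycm \times \cyc{\TheOrder'})$.

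For the second assertion, specialise to $\TheOrder = \TheOrder' = 2$. Relation \ref{prop:sec_pres_free_prod_2fact_relR2} collapses to $[h_1, \htwC] = 1$ and $[h_{\TheStrand-1}, \htw] = 1$, and together with \ref{prop:sec_pres_free_prod_2fact_relR1} these are exactly the Artin braid and commutator relations associated to the $\tilde{D}_\TheStrand$ Coxeter graph, with prongs $\{\htwC, h_1\}$ meeting at $h_2$ and $\{h_{\TheStrand-1}, \htw\}$ meeting at $h_{\TheStrand-2}$. What remains is to show that \ref{prop:sec_pres_free_prod_2fact_relR3} is redundant modulo these relations. A short rewrite using $[h_1, \htwC] = 1$ together with the braids $h_1 h_2 h_1 = h_2 h_1 h_2$ and $\htwC h_2 \htwC = h_2 \htwC h_2$ brings $(h_2 h_1 \htwC)^2$ step by step into $(h_1 \htwC h_2)^2$, and an analogous computation handles the right end. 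This yields $\tilde{A}(\Delta_\TheStrand^{2,2}) \cong A(\tilde{D}_\TheStrand)$, and the semidirect product $A(\tilde{D}_\TheStrand) \rtimes (\cyc{2} \times \cyc{2})$ is then a specialisation of the first part.
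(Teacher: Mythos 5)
Your overall strategy coincides with the paper's: partition the presentation of Proposition \ref{prop:sec_pres_free_prod_2fact} into $R$-, $S$- and $C$-relations and verify the two steps of Remark \ref{rem:semidir_prod_pres} so that Lemma \ref{lem:semidir_prod_pres} applies. Several individual points are also right: $\phi_\twsC$ and $\phi_{\twsC'}$ commute for $\TheStrand\geq4$ because they move disjoint pairs of generators; $\phi_\twsC^\TheOrder=\id$ comes down to a computation inside $\langle h_1,\htwC\rangle$ driven by \ref{prop:sec_pres_free_prod_2fact_relR2}; the $\phi_\twsC$-image of \ref{prop:sec_pres_free_prod_2fact_relR3} telescopes back to itself; and \ref{prop:sec_pres_free_prod_2fact_relR3} is indeed redundant when $\TheOrder=\TheOrder'=2$ (your route there, deriving the redundancy inside the Artin group, is legitimate and a touch more direct than the paper's).

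There is, however, a genuine gap in your Step 2. You declare that most of \ref{prop:sec_pres_free_prod_2fact_relR1} is preserved "since $\phi_\twsC$ moves only $h_1,\htwC$, which play symmetric roles", and you single out the preservation of \ref{prop:sec_pres_free_prod_2fact_relR3} as the main obstacle. This inverts the actual difficulty. Because $\phi_\twsC(h_1)=\htwC$ while $\phi_\twsC(\htwC)=(\htwC)^{-1}h_1\htwC\neq h_1$, the two generators are not interchanged, and the image of the braid relation $\htwC h_2\htwC=h_2\htwC h_2$ is
\[
(\htwC)^{-1}h_1\htwC\, h_2\,(\htwC)^{-1}h_1\htwC \;=\; h_2\,(\htwC)^{-1}h_1\htwC\, h_2,
\]
a statement about the conjugate $(\htwC)^{-1}h_1\htwC$ that does not follow from \ref{prop:sec_pres_free_prod_2fact_relR1} and \ref{prop:sec_pres_free_prod_2fact_relR2} alone: the computation \ref{prop:sec_pres_free_prod_2fact_par_cpxbr_rel} shows it is equivalent to $(h_1\htwC h_2)^2=(h_2h_1\htwC)^2$, i.e.\ exactly to \ref{prop:sec_pres_free_prod_2fact_relR3}. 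This is the sole reason the auxiliary relation is carried in the presentation and the reason the normal subgroup is $\tilde{A}(\Delta_\TheStrand^{\TheOrder,\TheOrder'})$ rather than the plain Artin group; if your symmetry argument were valid, the extra relation would be superfluous for every $\TheOrder,\TheOrder'$. Similarly, the image of $\langle h_1,\htwC\rangle_\TheOrder=\langle\htwC,h_1\rangle_\TheOrder$ is $\langle\htwC,(\htwC)^{-1}h_1\htwC\rangle_\TheOrder=\langle(\htwC)^{-1}h_1\htwC,\htwC\rangle_\TheOrder$, which is not manifestly symmetric but does reduce to the original relation after cancellation, as in \ref{prop:sec_pres_free_prod_2fact_par_conj_Artin-rel_length_m}. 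These two verifications must be supplied for the first, general statement of the theorem.
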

\begin{proof}
To prove that the presentation from Proposition \ref{prop:sec_pres_free_prod_2fact} induces the above semidirect product structure, we want to apply Lemma \ref{lem:semidir_prod_pres}. 

Therefore, we identify the generators and relations of the semidirect factors: 
The normal subgroup is generated by $\htwC,h_1,...,h_{\TheStrand-1}$ and $\htw$ with the relations from \ref{prop:sec_pres_free_prod_2fact_relR1}-\ref{prop:sec_pres_free_prod_2fact_relR3}, i.e.\ the presentation that we denote by $\tilde{A}(\Delta_\TheStrand^{\TheOrder,\TheOrder'})$. The quotient is generated by $\twsC$ and $\twsC'$ with relations from \ref{prop:sec_pres_free_prod_2fact_relS1}, i.e.\ the group $\cycm\times\cyc{\TheOrder'}$. Moreover, the presentation contains the conjugation relations \ref{prop:sec_pres_free_prod_2fact_relC1}-\ref{prop:sec_pres_free_prod_2fact_relC4} which are of the form $xhx^{-1}=\phi_x(h)$ with 
\[
h\in\{\htwC,h_1,...,h_{\TheStrand-1},\htw\} \text{ and } x\in\{\twsC,\twsC'\} 
\] 
and $\phi_x(h)$ a word in the generators of $\tilde{A}(\Delta_\TheStrand^{\TheOrder,\TheOrder'})$. It remains to check that this presentation satisfies the conditions from Lemma \ref{lem:semidir_prod_pres_it2}. There we require that the assignments $\phi_x:h\mapsto\phi_x(h)$ induce an automorphism $\phi_x\in\Aut(\tilde{A}(\Delta_\TheStrand^{\TheOrder,\TheOrder'}))$ and the assignments $\phi:x\mapsto\phi_x$ induce a homomorphism $\cycm\times\cyc{\TheOrder'}\rightarrow\Aut(\tilde{A}(\Delta_\TheStrand^{\TheOrder,\TheOrder'}))$. For the proof, we follow the Steps \ref{rem:semidir_prod_pres_step1} and \ref{rem:semidir_prod_pres_step2} described in Remark \ref{rem:semidir_prod_pres}.

\begin{step}
\label{thm:semidir_prod_2fact_step1}
The assignments $\phi:x\mapsto\phi_x$ induce a homomorphism. 
\end{step}

The relations from \ref{prop:sec_pres_free_prod_2fact_relC1}-\ref{prop:sec_pres_free_prod_2fact_relC4} induce the following assignments: 
\begin{align*}
\phi_{\twsC}: & \htwC & \mapsto \ & (\htwC)^{-1}h_1\htwC , && \text{ and } & \phi_{\twsC'}: & 
\htwC & \mapsto \ & \htwC, & 
\\
& h_1 & \mapsto \ & \htwC, &&  &  & h_\Strand & \mapsto \ & h_\Strand \; \text{ for } \; 1\leq\Strand\leq\TheStrand-2, & 
\\
& h_\NStrand & \mapsto \ & h_\NStrand \; \text{ for } \; 2\leq\NStrand<\TheStrand, && &  & h_{\TheStrand-1} & \mapsto \ & h_{\TheStrand-1}^{\twsC'}, &  
\\
& h_{\TheStrand-1}^{\twsC'} & \mapsto \ & h_{\TheStrand-1}^{\twsC'} &&  &  & h_{\TheStrand-1}^{\twsC'}  & \mapsto \ & (h_{\TheStrand-1}^{\twsC'})^{-1}h_{\TheStrand-1}h_{\TheStrand-1}^{\twsC'}. &
\end{align*}

At first, we verify that the conjugation relations \ref{prop:sec_pres_free_prod_2fact_relC1}-\ref{prop:sec_pres_free_prod_2fact_relC4} imply that $\phi_\twsC^\TheOrder(h)=h$ for each $h\in\{\htwC,h_1,...,h_{\TheStrand-1},\htw\}$.

For this purpose, we begin with proving 
\begin{equation}
\label{prop:sec_pres_free_prod_2fact_eq_conj_tau^k}
\twsC^kh_1\twsC^{-k}=\begin{cases}
\langle(\htwC)^{-1},h_1^{-1}\rangle_{k-1}\langle\htwC,h_1\rangle_k & \text{ for } k \text{ odd} 
\\
\langle(\htwC)^{-1},h_1^{-1}\rangle_{k-1}\langle h_1,\htwC\rangle_k & \text{ for } k \text{ even} 
\end{cases}
\end{equation}
by induction on $k$. 

For $k=1$, by \ref{prop:sec_pres_free_prod_2fact_relC3}, we have $\twsC h_1\twsC^{-1}=\htwC$ and further, for $k=2$, by \ref{prop:sec_pres_free_prod_2fact_relC4} implies $\twsC^2h_1\twsC^{-2}=(\htwC)^{-1}h_1\htwC$. 

Now let us assume \eqref{prop:sec_pres_free_prod_2fact_eq_conj_tau^k} for a fixed $k$. For $k=2l+1$, we have 
\begin{align*}
\twsC^{k+1}h_1\twsC^{-(k+1)} & \mystackrel{(\ref{prop:sec_pres_free_prod_2fact_eq_conj_tau^k})}= \twsC\langle(\htwC)^{-1},h_1^{-1}\rangle_{k-1}\langle\htwC,h_1\rangle_k\twsC^{-1} 
\\
& \mystackrel{k=2l+1}= \twsC((\htwC)^{-1}h_1^{-1})^l(\htwC h_1)^l\htwC\twsC^{-1} 
\\
& \mystackrel{\ref{prop:sec_pres_free_prod_2fact_relC3},\ref{prop:sec_pres_free_prod_2fact_relC4}}= ((\htwC)^{-1}h_1^{-1}\textcolor{\short}{\htwC(\htwC)^{-1}})^l((\htwC)^{-1}h_1\htwC\textcolor{\short}{\htwC})^l\textcolor{\short}{(\htwC)^{-1}}h_1\htwC 
\\
& \mystackrel{}= ((\htwC)^{-1}h_1^{-1})^l(\htwC)^{-1}(h_1\htwC)^lh_1\htwC 
\\
& \mystackrel{k=2l+1}= \langle(\htwC)^{-1},h_1^{-1}\rangle_k\langle h_1,\htwC\rangle_{k+1}.  
\end{align*}
For $k=2l$, this implies 
\begin{align*}
\twsC^{k+1}h_1\twsC^{-(k+1)} & \mystackrel{(\ref{prop:sec_pres_free_prod_2fact_eq_conj_tau^k})}= \twsC\langle(\htwC)^{-1},h_1^{-1}\rangle_{k-1}\langle h_1,\htwC\rangle_k\twsC^{-1} 
\\
& \mystackrel{k=2l}= \twsC((\htwC)^{-1}h_1^{-1})^{l-1}(\htwC)^{-1}(h_1\htwC )^l\twsC^{-1} 
\\
& \mystackrel{\ref{prop:sec_pres_free_prod_2fact_relC3},\ref{prop:sec_pres_free_prod_2fact_relC4}}= ((\htwC)^{-1}h_1^{-1}\textcolor{\short}{\htwC(\htwC)^{-1}})^{l-1}(\htwC)^{-1}h_1^{-1}\htwC(\textcolor{\short}{\htwC(\htwC)^{-1}}h_1\htwC)^l 
\\
& \mystackrel{}= ((\htwC)^{-1}h_1^{-1})^l\htwC(h_1\htwC)^l 
\\
& \mystackrel{k=2l}= \langle(\htwC)^{-1},h_1^{-1}\rangle_k\langle\htwC,h_1\rangle_{k+1}. 
\end{align*}
This proves that the formula from (\ref{prop:sec_pres_free_prod_2fact_eq_conj_tau^k}) holds for every $k\in\NN$. Using relation~\ref{prop:sec_pres_free_prod_2fact_relR2}, this implies 
\begin{equation}
\label{prop:sec_pres_free_prod_2fact_eq_conj_tau_h1}
\twsC^\TheOrder h_1\twsC^{-\TheOrder}\stackrel{\eqref{prop:sec_pres_free_prod_2fact_eq_conj_tau^k}}=\left\lbrace\begin{matrix}
\langle(\htwC)^{-1},h_1^{-1}\rangle_{\TheOrder-1}\textcolor{\col}{\langle\htwC,h_1\rangle_\TheOrder} 
\\
\langle(\htwC)^{-1},h_1^{-1}\rangle_{\TheOrder-1}\textcolor{\col}{\langle h_1,\htwC\rangle_\TheOrder}
\end{matrix}\stackrel{\text{\ref{prop:sec_pres_free_prod_2fact_relR2}}}=\begin{matrix}
\langle(\htwC)^{-1},h_1^{-1}\rangle_{\TheOrder-1}\langle h_1,\htwC\rangle_\TheOrder 
\\
\langle(\htwC)^{-1},h_1^{-1}\rangle_{\TheOrder-1}\langle\htwC,h_1\rangle_\TheOrder
\end{matrix}\right\rbrace=h_1 \text{ for } \begin{matrix}
\TheOrder \text{ odd, }
\\
\TheOrder \text{ even. }
\end{matrix}
\end{equation}
This observation further implies 
\begin{equation}
\label{prop:sec_pres_free_prod_2fact_eq_conj_tau_htau}
\twsC^\TheOrder\htwC\twsC^{-\TheOrder}\stackrel{\text{\ref{prop:sec_pres_free_prod_2fact_relC3}}}=\twsC^{\TheOrder+1} h_1\twsC^{-(\TheOrder+1)}\stackrel{\text{\eqref{prop:sec_pres_free_prod_2fact_eq_conj_tau_h1}}}=\twsC h_1\twsC^{-1}\stackrel{\text{\ref{prop:sec_pres_free_prod_2fact_relC3}}}=\htwC. 
\end{equation}
Moreover, by relation \ref{prop:sec_pres_free_prod_2fact_relC1}, $\phi_\twsC(h_\NStrand)=h_\NStrand$ for $\NStrand\geq2$ and by relation \ref{prop:sec_pres_free_prod_2fact_relC2} $\phi_\twsC(\htw)=\htw$. Thus, we obtain that $\phi_\twsC^\TheOrder$ induces the identity map in $\tilde{A}(\Delta_\TheStrand^{\TheOrder,\TheOrder'})$. An analogous argument shows that $\phi_{\twsC'}^{\TheOrder'}$ induces the identity map in $\tilde{A}(\Delta_\TheStrand^{\TheOrder,\TheOrder'})$. Furthermore, the compositions $\phi_\twsC\circ\phi_{\twsC'}$ and $\phi_{\twsC'}\circ\phi_\twsC$ both induce the assignments 
\begin{align*}
\htwC & \mapsto (\htwC)^{-1}h_1\htwC, 
\\
h_1 & \mapsto \htwC, & 
\\
h_\Strand & \mapsto h_\Strand \; \text{ for } \; 2\leq\Strand\leq\TheStrand-2, 
\\
h_{\TheStrand-1} & \mapsto \htw, 
\\
\htw & \mapsto (\htw)^{-1}h_{\TheStrand-1}\htw, 
\end{align*}
i.e.\ the assignments $\phi$ preserve the relations from \ref{prop:sec_pres_free_prod_2fact_relS1}. Hence, $\phi$ by Theorem~\ref{thm:von_Dyck} induces a homomorphism $\cycm\times\cyc{\TheOrder'}\rightarrow\Aut(\tilde{A}(\Delta_\TheStrand^{\TheOrder,\TheOrder'}))$. 

\begin{step}
\label{thm:semidir_prod_2fact_step2}
The assignments $\phi_x:h\mapsto\phi_x(h)$ induce an automorphism in $\Aut(\tilde{A}(\Delta_\TheStrand^{\TheOrder,\TheOrder'}))$. 
\end{step}

As explained in Step \ref{rem:semidir_prod_pres_step2} of Remark \ref{rem:semidir_prod_pres}, this requires to check that the assignments $\phi_x:h\mapsto\phi_x(h)$ for $x=\twsC$ and $x=\twsC'$ preserve the relations of $\tilde{A}(\Delta_\TheStrand^{\TheOrder,\TheOrder'})$.

\newpage 
\setcounter{page}{22}

Let us focus on $\phi_\twsC$ at first. In Table \ref{tab:conj_tau} we list the defining relations of $\tilde{A}(\Delta_\TheStrand^{\TheOrder,\TheOrder'})$ and their $\phi_\twsC$-images. We omit relations $w=w'$ if $\phi_\twsC(w)=w$ and $\phi_\twsC(w')=w'$. 
For the most of the relations from \ref{prop:sec_pres_free_prod_2fact_relR1}-\ref{prop:sec_pres_free_prod_2fact_relR3}, it follows directly that they are preserved by $\phi_u$. For the less obvious relations, we refer to the explanations below Table \ref{tab:conj_tau}. 

\renewcommand{\arraystretch}{1.75}
{\setlength\LTleft{-0.125\textwidth}
\begin{tabularx}{1.25\linewidth}{rl|l}
\caption{}
\label{tab:conj_tau} \\
& Relations from $\tilde{A}(\Delta_\TheStrand^{\TheOrder,\TheOrder'}), \TheStrand\geq4$ & Images under $\phi_\twsC$ \\
\cline{2-3}
\noalign{\vskip\doublerulesep\vskip-\arrayrulewidth}
\cline{2-3}
\endfirsthead
& Relations from $\tilde{A}(\Delta_\TheStrand^{\TheOrder,\TheOrder'}), \TheStrand\geq4$ & Images under $\phi_\twsC$ \\
\cline{2-3}
\noalign{\vskip\doublerulesep\vskip-\arrayrulewidth}
\cline{2-3}
\endhead
\multicolumn{3}{r}{\footnotesize(To be continued)}
\endfoot
\cline{2-3}
\noalign{\vskip\doublerulesep\vskip-\arrayrulewidth}
\cline{2-3}
\endlastfoot
\multirow{4}{3.5em}{\ref{prop:sec_pres_free_prod_2fact_relR1}$\left\{\rule{0cm}{1.5cm}\right.$}& $h_1h_2h_1=h_2h_1h_2$ & $\htwC h_2\htwC\stackrel{\text{\ref{prop:sec_pres_free_prod_2fact_relR1}}}=h_2\htwC h_2$, 
\\ \cline{2-3}
& $[h_1,h_\Strand]=1$ \; for \; $\Strand\geq3$ & $[\htwC,h_\Strand]\stackrel{\text{\ref{prop:sec_pres_free_prod_2fact_relR1}}}=1$, 
\\ \cline{2-3}
& $\htwC h_2\htwC=h_2\htwC h_2$ & $(\htwC)^{-1}h_1\htwC h_2(\htwC)^{-1}h_1\htwC=h_2(\htwC)^{-1}h_1\htwC h_2$ by \ref{prop:sec_pres_free_prod_2fact_par_cpxbr_rel}, 
\\ \cline{2-3}
& $[\htwC,h_\Strand]=1$ \; for \; $\Strand\geq3$ & $[(\htwC)^{-1}h_1\htwC,h_\Strand]=1$ by \ref{prop:sec_pres_free_prod_2fact_relR1}, 
\\ \cline{2-3}
\multirow{1}{3.5em}{\ref{prop:sec_pres_free_prod_2fact_relR2}$\left\{\rule{0cm}{0.4cm}\right.$}& $\langle h_1,\htwC\rangle_\TheOrder=\langle\htwC,h_1\rangle_\TheOrder$ & $\langle\htwC,(\htwC)^{-1}h_1\htwC\rangle_\TheOrder=\langle(\htwC)^{-1}h_1\htwC,\htwC\rangle_\TheOrder$ by \ref{prop:sec_pres_free_prod_2fact_par_conj_Artin-rel_length_m}, 
\\ \cline{2-3}
\multirow{1}{3.5em}{\ref{prop:sec_pres_free_prod_2fact_relR3}$\left\{\rule{0cm}{0.4cm}\right.$}& $(h_1\htwC h_2)^2=(h_2h_1\htwC)^2$ & $(\textcolor{\short}{\htwC(\htwC)^{-1}}h_1\htwC h_2)^2\stackrel{\text{\ref{prop:sec_pres_free_prod_2fact_relR3}}}=(h_2\textcolor{\short}{\htwC(\htwC)^{-1}}h_1\htwC)^2$ 
\end{tabularx}
}

\begin{enumerate}[leftmargin=1.3cm,label={\textbf{\thetheorem.\arabic*.}},ref={\thetheorem.\arabic*}]
\item 
\label{prop:sec_pres_free_prod_2fact_par_cpxbr_rel}
\vspace*{-8pt}To deduce that $\phi_\twsC$ preserves the relation $\htwC h_2\htwC=h_2\htwC h_2$ from \ref{prop:sec_pres_free_prod_2fact_relR1}, 
\end{enumerate}
\noindent we observe: 
\begin{align*}
 & (\htwC)^{-1}h_1\htwC h_2(\htwC)^{-1}h_1\htwC & = \ & h_2(\htwC)^{-1}h_1\htwC h_2 
\\
\mystackrel{$\vee$}\Leftrightarrow & (\htwC)^{-1}(h_2^{-1}h_2)h_1\htwC h_2(\htwC)^{-1}h_1\htwC & = \ & \textcolor{\col}{h_2(\htwC)^{-1}(h_2^{-1}}h_2)h_1\htwC h_2 
\\
\mystackrel{\ref{prop:sec_pres_free_prod_2fact_relR1}}\Leftrightarrow & \textcolor{\short}{(\htwC)^{-1}h_2^{-1}}h_2h_1\htwC h_2(\htwC)^{-1}h_1\htwC & = \ & \textcolor{\short}{(\htwC)^{-1}h_2^{-1}}\htwC h_2h_1\htwC h_2 \quad\vert h_2\htwC\cdot 
\\
\mystackrel{}\Leftrightarrow & h_2h_1\textcolor{\col}{\htwC h_2(\htwC)^{-1}}h_1\htwC & = \ & \htwC h_2h_1\htwC h_2 
\\
\mystackrel{\ref{prop:sec_pres_free_prod_2fact_relR1}}\Leftrightarrow & \textcolor{\col}{h_2h_1h_2^{-1}}\htwC h_2h_1\htwC & = \ & \htwC h_2h_1\htwC h_2 
\\
\mystackrel{\ref{prop:sec_pres_free_prod_2fact_relR1}}\Leftrightarrow & \textcolor{\short}{h_1^{-1}}h_2h_1\htwC h_2h_1\htwC & = \ & \htwC h_2h_1\htwC h_2 \hspace*{54pt}\vert h_1\cdot 
\\
\mystackrel{}\Leftrightarrow & h_2h_1\htwC h_2h_1\htwC & = \ & h_1\htwC h_2h_1\htwC h_2. 
\end{align*}
The final relation is contained in \ref{prop:sec_pres_free_prod_2fact_relR3}. Hence, $\phi_\twsC$ preserves the relation $\htwC h_2\htwC=h_2\htwC h_2$. 
\vspace*{5pt}
\begin{enumerate}[resume,leftmargin=1.3cm,label={\textbf{\thetheorem.\arabic*.}},ref={\thetheorem.\arabic*}]
\item 
\label{prop:sec_pres_free_prod_2fact_par_conj_Artin-rel_length_m}
\vspace*{-8pt}To deduce that $\phi_\twsC$ preserves the relation $\langle h_1,\htwC\rangle_\TheOrder=\langle\htwC,h_1\rangle_\TheOrder$ from
\end{enumerate}
\ref{prop:sec_pres_free_prod_2fact_relR2}, we prove the following equivalence: 
\[
\langle \htwC,(\htwC)^{-1}h_1\htwC\rangle_\TheOrder=\langle(\htwC)^{-1}h_1\htwC,\htwC\rangle_\TheOrder \Leftrightarrow \langle h_1,\htwC\rangle_\TheOrder=\langle\htwC,h_1\rangle_\TheOrder.  
\]
For $\TheOrder=2l$, this follows from 
\begin{align*}
& (\textcolor{\short}{\htwC(\htwC)^{-1}}h_1\htwC)^l=((\htwC)^{-1}h_1\htwC\htwC)^l && 
\\
\Leftrightarrow & (h_1\textcolor{\short}{\htwC})^l=\textcolor{\short}{(\htwC)^{-1}}(h_1\htwC)^l\textcolor{\short}{\htwC} && \vert \htwC\cdot\;\cdot(\htwC)^{-1}
\\
\Leftrightarrow & (\htwC h_1)^l=(h_1\htwC)^l. && 
\end{align*}
For $\TheOrder=2l+1$, this follows from 
\begin{align*}
& (\textcolor{\short}{\htwC(\htwC)^{-1}}h_1\htwC)^l\htwC=((\htwC)^{-1}h_1\htwC\textcolor{\short}{\htwC})^l\textcolor{\short}{(\htwC)^{-1}}h_1\htwC && 
\\
\Leftrightarrow & (h_1\htwC)^l\textcolor{\short}{\htwC}=\textcolor{\short}{(\htwC)^{-1}}(h_1\htwC)^lh_1\textcolor{\short}{\htwC} && \vert\htwC\cdot\;\cdot(\htwC)^{-1}
\\
\Leftrightarrow & \htwC(h_1\htwC)^l=(h_1\htwC)^lh_1. && 
\end{align*}

Table \ref{tab:conj_tau} implies that the assignments $h\mapsto\phi_\twsC(h)$ induce an endomorphism of $\tilde{A}(\Delta_\TheStrand^{\TheOrder,\TheOrder'})$ for $\TheStrand\geq4$. Since $\phi$ preserves the relation $\twsC^\TheOrder=1$, the inverse of the endomorphism induced by $h\mapsto\phi_\twsC(h)$ is given by assignments $h\mapsto\phi_\twsC^{\TheOrder-1}(h)$, i.e.\ $h\mapsto\phi_\twsC(h)$ induces an automorphism. 

Due to the symmetry of the presentation of $\tilde{A}(\Delta_\TheStrand^{\TheOrder,\TheOrder'})$, similar observations hold for the map $h\mapsto\phi_{\twsC'}(h)$. Hence, the above presentation satisfies the conditions from Lemma \ref{lem:semidir_prod_pres_it2} and describes the semidirect product $\tilde{A}(\Delta_\TheStrand^{\TheOrder,\TheOrder'})\rtimes(\cycm\times\cyc{\TheOrder'})$ for~$\TheStrand\geq4$. 

The only part where the additional relation $(h_1\htwC h_2)^2=(h_2h_1\htwC)^2$ from \ref{prop:sec_pres_free_prod_2fact_relR3} was required was the Step \ref{thm:semidir_prod_2fact_step2}. There, the relation was necessary to make sure that $\phi_\twsC$ preserves the relation $\htwC h_2\htwC=h_2\htwC h_2$, see \ref{prop:sec_pres_free_prod_2fact_par_cpxbr_rel}. Analogously, the relation $(h_{\TheStrand-1}\htw h_{\TheStrand-2})^2=(h_{\TheStrand-2}h_{\TheStrand-1}\htw)^2$ is required when we apply $\phi_{\twsC'}$ to the relation $\htw h_{\TheStrand-2}\htw=h_{\TheStrand-2}\htw h_{\TheStrand-2}$. 

In the following, we observe that this additional relation is redundant in the case where both cone points are of order two. In this case, the relations from \ref{prop:sec_pres_free_prod_2fact_relR2} are given by $h_1\htwC=\htwC h_1$ and $h_{\TheStrand-1}\htw=\htw h_{\TheStrand-1}$. Hence, the $\phi_\twsC$-image of $\htwC h_2\htwC=h_2\htwC h_2$ simplifies to $\htwC h_2\htwC=h_2\htwC h_2$ which is covered by \ref{prop:sec_pres_free_prod_2fact_relR1}. Analogously, the $\phi_{\twsC'}$-image of $\htw h_{\TheStrand-2}\htw=h_{\TheStrand-2}\htw h_{\TheStrand-2}$ simplifies to $\htw h_{\TheStrand-2}\htw=h_{\TheStrand-2}\htw h_{\TheStrand-2}$ which is also covered by \ref{prop:sec_pres_free_prod_2fact_relR1}. Thus, we may omit relation \ref{prop:sec_pres_free_prod_2fact_relR3} in the presentation from Proposition \ref{prop:sec_pres_free_prod_2fact} for $\TheOrder=\TheOrder'=2$ and the presentation still satisfies the conditions from Lemma \ref{lem:semidir_prod_pres_it2}. That is $\Z_\TheStrand(\Sigma_{\cyc{2}\ast\cyc{2}})=A(\tilde{D}_\TheStrand)\rtimes(\cyc{2}\times\cyc{2})$. 
\end{proof}

\begin{remark}
\label{rem:additional_rel}
We also obtain a similar semidirect product structure 
\[
\Z_\TheStrand(\Sigma_{\freeprodtwo})=\langle\htwC,h_1,h_2,h_2^{\twsC'}\rangle\rtimes(\cycm\times\cyc{\TheOrder'})
\]
for $\TheStrand=3$ and $\max\{\TheOrder,\TheOrder'\}>2$. The only difference in this case is that the relations in \ref{prop:sec_pres_free_prod_2fact_relR3} are replaced by the following relations: 
\begin{enumerate}[label={(R\arabic*')},ref={\thetheorem(R\arabic*')}]
\setcounter{enumi}{2}
\item
\label{prop:sec_pres_free_prod_2fact_relR3_n3}
\begin{align*}
(h_1\htwC((h_2^{\twsC'})^{-1}h_2^{-1})^kh_2^{\twsC'}(h_2h_2^{\twsC'})^k)^2 & =((h_2^{\twsC'})^{-1}h_2^{-1})^kh_2^{\twsC'}(h_2h_2^{\twsC'})^kh_1\htwC)^2 \text{ and }
\\
(h_1\htwC((h_2^{\twsC'})^{-1}h_2^{-1})^{k'}h_2(h_2h_2^{\twsC'})^{k'})^2 & =((h_2^{\twsC'})^{-1}h_2^{-1})^{k'}h_2(h_2h_2^{\twsC'})^{k'}h_1\htwC)^2 
\end{align*}
for $0\leq k,k'<l'$ if $\TheOrder'=2l'$ and $0\leq k<l'$, $0\leq k'\leq l'$ if $\TheOrder'=2l'+1$ and 
\begin{align*}
(h_2h_2^{\twsC'}((\htwC)^{-1}h_1^{-1})^k\htwC(h_1\htwC)^k)^2 & =(((\htwC)^{-1}h_1^{-1})^k\htwC(h_1\htwC)^kh_2h_2^{\twsC'})^2 \text{ and }
\\
(h_2h_2^{\twsC'}((\htwC)^{-1}h_1^{-1})^{k'}h_1(h_1\htwC)^{k'})^2 & =(((\htwC)^{-1}h_1^{-1})^{k'}h_1(h_1\htwC)^{k'}h_2h_2^{\twsC'})^2
\end{align*}
for $0\leq k,k'<l$ if $\TheOrder=2l$ and $0\leq k<l$, $0\leq k'\leq l$ if $\TheOrder=2l+1$. 
\end{enumerate}
These relations are required to deduce that $\phi_\twsC$ and $\phi_{\twsC'}$ both preserve the relation $\htwC h_2^{\twsC'}\htwC=h_2^{\twsC'}\htwC h_2^{\twsC'}$. At first, analogous to \ref{prop:sec_pres_free_prod_2fact_par_cpxbr_rel}, this makes the additional relations $(h_1\htwC h_2^{\twsC'})^2=(h_2^{\twsC'}h_1\htwC)^2$ and $(h_2 h_2^{\twsC'}\htwC)^2=(\htwC h_2h_2^{\twsC'})^2$ necessary. To guarantee that these additional relations are iteratively preserved by $\phi_\twsC$ and $\phi_{\twsC'}$, the relations~\ref{prop:sec_pres_free_prod_2fact_relR3_n3} are sufficient. 

For $\TheOrder=\TheOrder'=2$, the assignments $\phi_\twsC$ map the relation $\htwC h_2^{\twsC'}\htwC=h_2^{\twsC'}\htwC h_2^{\twsC'}$ to $h_1h_2^{\twsC'}h_1=h_2^{\twsC'}h_1h_2^{\twsC'}$ and the assignments $\phi_{\twsC'}$ map it to $\htwC h_2\htwC=h_2\htwC h_2$. Hence, no relations of type \ref{prop:sec_pres_free_prod_2fact_relR3_n3} are required in the case $\TheOrder=\TheOrder'=2$. 
\end{remark}

\newpage 

Omitting the generators $\twC{2}$ in $A$ and $\twsC',\htw$ in $B$, the above proof also yields Theorem \ref{thm-intro:general_Allcock}\ref{thm-intro:general_Allcock_it2}: 

\begin{corollary}
\label{cor:sec_pres_free_prod_1fact}
For $\TheStrand\geq2$, the group $\Z_\TheStrand(D_{\cycm})$ has the semidirect product structure $\tilde{A}(\Delta_\TheStrand^\TheOrder)\rtimes\cycm$. For $\TheOrder=2$, the group $\tilde{A}(\Delta_\TheStrand^2)$ is an Artin group of type~$D_\TheStrand$. For $\TheStrand=2$, the group $\tilde{A}(\Delta_2^\TheOrder)$ is an Artin group of type $I_2(\TheOrder)$. For $\TheStrand,\TheOrder\geq3$, the group $\tilde{A}(\Delta_\TheStrand^\TheOrder)$ is the complex braid group of type $\cpxbr{\TheOrder}{\TheStrand}$. 
\end{corollary}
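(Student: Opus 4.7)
The strategy is to replay the two-part argument used for Theorem \ref{thm:semidir_prod_2fact} in the one-cone-point setting, and then invoke the dictionary between $\tilde A(\Delta_\TheStrand^\TheOrder)$ and known Artin/complex braid groups.

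\textbf{Step 1: an analog of Proposition \ref{prop:sec_pres_free_prod_2fact}.} First I would derive a presentation of $\Z_\TheStrand(D_{\cycm})$ on the generators $h_1,\dots,h_{\TheStrand-1},\htwC,\twsC$ by the defining relations \ref{prop:sec_pres_free_prod_2fact_relR1}, \ref{prop:sec_pres_free_prod_2fact_relR2}, \ref{prop:sec_pres_free_prod_2fact_relR3}, \ref{prop:sec_pres_free_prod_2fact_relS1}, \ref{prop:sec_pres_free_prod_2fact_relC1}--\ref{prop:sec_pres_free_prod_2fact_relC4}, stripped of every clause that mentions $\twsC'$ or $\htw$. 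The argument is literally the $\twC{2}$-free restriction of the proof of Proposition \ref{prop:sec_pres_free_prod_2fact}: define $\varphi,\psi$ by the obvious one-sided analogues of \eqref{prop:sec_pres_free_prod_2fact_eq_def_varphi_psi} and check, using Theorem~\ref{thm:von_Dyck}, that the relations are preserved. Each entry of Tables \ref{tab:rel_psi} and \ref{tab:rel_varphi} that does not involve $\twsC'$, $\htw$ or $\twC{2}$ is verified verbatim, drawing on Lemma~\ref{lem:psi_rel} (only the unprimed half of \ref{lem:psi_rel_R1}, \ref{lem:psi_rel_R2}, \ref{lem:psi_rel_C1}, \ref{lem:psi_rel_C2} and relation \ref{lem:psi_rel_R3} are needed).

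\textbf{Step 2: semidirect product structure.} With this presentation in hand, I would apply Lemma \ref{lem:semidir_prod_pres} following Remark \ref{rem:semidir_prod_pres}. The normal subgroup is $\tilde A(\Delta_\TheStrand^\TheOrder) = \langle h_1,\dots,h_{\TheStrand-1},\htwC\rangle$ with defining relations \ref{prop:sec_pres_free_prod_2fact_relR1}, \ref{prop:sec_pres_free_prod_2fact_relR2}, \ref{prop:sec_pres_free_prod_2fact_relR3}; the quotient is $\cycm=\langle\twsC\rangle$ with relation $\twsC^\TheOrder=1$; the rest are conjugation relations. Step \ref{rem:semidir_prod_pres_step1} (the assignments $\twsC\mapsto\phi_\twsC$ respect $\twsC^\TheOrder=1$) amounts to recomputing \eqref{prop:sec_pres_free_prod_2fact_eq_conj_tau^k}--\eqref{prop:sec_pres_free_prod_2fact_eq_conj_tau_htau}, which goes through unchanged. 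Step \ref{rem:semidir_prod_pres_step2} (that $\phi_\twsC$ preserves the relations of $\tilde A(\Delta_\TheStrand^\TheOrder)$) is the unprimed half of Table~\ref{tab:conj_tau}; in particular \ref{prop:sec_pres_free_prod_2fact_par_cpxbr_rel} is what forces relation \ref{prop:sec_pres_free_prod_2fact_relR3} into the presentation for $\TheStrand\geq3$. For $\TheStrand=2$ there is no $h_2$, so \ref{prop:sec_pres_free_prod_2fact_relR3} is vacuous and Step \ref{rem:semidir_prod_pres_step2} reduces to checking \ref{prop:sec_pres_free_prod_2fact_relR2}, which is immediate from \ref{prop:sec_pres_free_prod_2fact_par_conj_Artin-rel_length_m}.

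\textbf{Step 3: identification of $\tilde A(\Delta_\TheStrand^\TheOrder)$.} With the semidirect decomposition established, the three claims are read off from the presentation:
\begin{itemize}
\item For $\TheStrand=2$, the presentation collapses to $\langle h_1,\htwC\mid \langle h_1,\htwC\rangle_\TheOrder=\langle\htwC,h_1\rangle_\TheOrder\rangle$, which is the Artin group of type $I_2(\TheOrder)$.
\item For $\TheOrder=2$, relation \ref{prop:sec_pres_free_prod_2fact_relR2} becomes $[h_1,\htwC]=1$, making \ref{prop:sec_pres_free_prod_2fact_relR3} redundant (the specialization argument used at the end of the proof of Theorem \ref{thm:semidir_prod_2fact} for $\TheOrder=\TheOrder'=2$), and what remains is precisely a presentation of the Artin group of type $D_\TheStrand$.
\item For $\TheStrand,\TheOrder\geq3$ the presentation matches the definition of $\tilde A(\Delta_\TheStrand^\TheOrder)$ given in Section~\ref{sec:general_Allcock}, which by the cited isomorphism $\B(\TheOrder,\TheOrder,\TheStrand)\cong\tilde A(\Delta_\TheStrand^\TheOrder)$ \cite[Section 3.C]{BroueMalleRouquier1998} is the complex braid group of type $\cpxbr{\TheOrder}{\TheStrand}$.
\end{itemize}

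\textbf{Main obstacle.} The real work is all in Step~1 and the one-sided version of \ref{prop:sec_pres_free_prod_2fact_par_cpxbr_rel} in Step~2: checking that the ``twisted braid'' relation $\htwC h_2\htwC=h_2\htwC h_2$ is preserved under $\phi_\twsC$, which is exactly what forces the extra relation \ref{prop:sec_pres_free_prod_2fact_relR3} into the presentation and which was the novel computational input of the two-cone-point case. Because nothing in that calculation used a second cone point, it ports over intact; the corresponding potential complication from Remark~\ref{rem:additional_rel} (where small $\TheStrand$ forced further relations when two cone points interact) does not arise here, so the case $\TheStrand=3$ needs no extra relations beyond \ref{prop:sec_pres_free_prod_2fact_relR1}--\ref{prop:sec_pres_free_prod_2fact_relR3}.
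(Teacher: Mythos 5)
Your proposal is correct and follows essentially the same route as the paper: the paper likewise reduces to the $\twC{2}$-free restriction of Proposition \ref{prop:sec_pres_free_prod_2fact} and Theorem \ref{thm:semidir_prod_2fact}, notes via Remark \ref{rem:additional_rel} that no extra relations beyond \ref{prop:sec_pres_free_prod_2fact_relR3} are needed for $\TheStrand=3$, and identifies $\tilde{A}(\Delta_\TheStrand^\TheOrder)$ exactly as you do. The only cosmetic difference is that the paper writes out the $\TheStrand=2$ case as a self-contained verification (defining the presentations $A$ and $B$ explicitly and rechecking \eqref{prop:sec_pres_free_prod_2fact_eq_even}--\eqref{prop:sec_pres_free_prod_2fact_eq_odd}), since Lemma \ref{lem:psi_rel} and Proposition \ref{prop:sec_pres_free_prod_2fact} are stated only for $\TheStrand\geq3$; your observation that the relevant computations go through for $\TheStrand=2$ is what that separate verification amounts to.
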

\begin{proof}
For $\TheStrand\geq4$, this is literally covered by the arguments in Proposition \ref{prop:sec_pres_free_prod_2fact} and Theorem \ref{thm:semidir_prod_2fact}. Moreover, for $\TheStrand=3$, we omit the generator $h_2^{\twsC'}$. Thus, the relation $\htwC h_2^{\twsC'}\htwC=h_2^{\twsC'}\htwC h_2^{\twsC'}$ is not relevant in this case. Following Remark \ref{rem:additional_rel}, this yields that the additional relations from \ref{prop:sec_pres_free_prod_2fact_relR3_n3} are not required and $\langle\htwC,h_1,h_2\rangle$ is isomorphic to $\tilde{A}(\Delta_3^\TheOrder)$. 

For $\TheStrand=2$, Theorem \ref{thm:pres_Z_n} implies that the group $\Z_2(D_{\cycm})$ has a presentation~$A$ with generators $h_1$ and $\twC{1}$ and defining relations $\twC{1}^\TheOrder=1$ and $[h_1\twC{1}h_1,\twC{1}]=1$. Moreover, let 
\[
B:=\langle\twsC,h_1,\htwC\mid \twsC^\TheOrder=1,\langle h_1,\htwC\rangle_\TheOrder=\langle\htwC,h_1\rangle_\TheOrder,\twsC h_1\twsC^{-1}=\htwC, \twsC\htwC\twsC^{-1}=(\htwC)^{-1}h_1\htwC\rangle. 
\]
As in Proposition \ref{prop:sec_pres_free_prod_2fact}, we define assignments 
\begin{align*}
\varphi:A & \rightarrow B, & & \text{and} & \psi:B & \rightarrow A, 
\\
h_1 & \mapsto h_1, & & & h_1 & \mapsto h_1, 
\\
\twC{1} & \mapsto \twsC & & & \htwC & \mapsto\twC{1}h_1\twC{1}^{-1}, 
\\
& & & & \twsC & \mapsto\twC{1} 
\end{align*}
to show that $Z_2(D_{\cycm})$ has the presentation $B$.  

The definitions of $\varphi$ and $\psi$ directly imply that the assignments $\psi\circ\varphi$ describe the identical map on the letters $h_1$ and $\twC{1}$ and the assignments $\varphi\circ\psi$ yield the identical map on the generators of $B$ (modulo relations in $B$).

To deduce that these assignments induce homomorphisms we apply Theorem~\ref{thm:von_Dyck}. Therefore, it remains to check that $\varphi$ and $\psi$ preserve the relations of $A$ and $B$, respectively. 

Under $\varphi$ the relation $\twC{1}^\TheOrder=1$ maps to $\twsC^\TheOrder=1$ which is covered by $B$.  
The same argument as in \ref{prop:sec_pres_free_prod_2fact_par_th1th1=h1th1t} shows that $\twsC h_1\twsC^{-1}=\htwC$ and $\twsC\htwC\twsC^{-1}=(\htwC)^{-1}h_1\htwC$ imply $[h_1\twsC h_1,\twsC]=1$, i.e.\ $\varphi$ preserves the relation $[h_1\twC{1}h_1,\twC{1}]=~1$. 

Under $\psi$ the relation $\twsC^\TheOrder=1$ maps to $\twC{1}^\TheOrder=1$ which is covered by the relations in~$A$. Moreover, $\psi$ maps $\twsC h_1\twsC^{-1}=\htwC$ to $\twC{1}h_1\twC{1}^{-1}=\twC{1}h_1\twC{1}^{-1}$, i.e.\ $\psi$ preserves the first conjugation relation. 
The second conjugation relation $\twsC\htwC\twsC^{-1}=(\htwC)^{-1}h_1\htwC$ maps to $\twC{1}^2h_1\twC{1}^{-2}=\twC{1}h_1^{-1}\twC{1}^{-1}h_1\twC{1}h_1\twC{1}^{-1}$ which follows from relation \ref{cor:pres_orb_braid_free_prod_rel4}. 
As explained in \eqref{prop:sec_pres_free_prod_2fact_eq_even} and \eqref{prop:sec_pres_free_prod_2fact_eq_odd}, the relation $\langle\psi(h_1),\psi(\htwC)\rangle_\TheOrder=\langle\psi(\htwC),\psi(h_1)\rangle_\TheOrder$ holds in the group with presentation $A$. 

Thus, by Theorem \ref{thm:von_Dyck}, $\varphi$ and $\psi$ induce inverse homomorphisms. Hence, the group $\Z_2(D_{\cycm})$ has the presentation $B$. 

It remains to verify that $B$ represents a semidirect product with normal subgroup $\tilde{A}(\Delta_2^\TheOrder)=A(I_2(\TheOrder))$ generated by $h_1$ and $\htwC$ and the quotient $\cycm=\langle\twsC\rangle$. By  Lemma~\ref{lem:semidir_prod_pres}, this requires that $\phi_\twsC^\TheOrder$ induces a trivial map in $A(I_2(\TheOrder))$ and $\phi_\twsC$ induces an automorphism in terms of the $A(I_2(\TheOrder))$-presentation. 

That $\phi_\twsC^\TheOrder(h_1)=h_1$ and $\phi_\twsC^\TheOrder(\htwC)=\htwC$ follows as in (\ref{prop:sec_pres_free_prod_2fact_eq_conj_tau_h1}) and (\ref{prop:sec_pres_free_prod_2fact_eq_conj_tau_htau}). Moreover, $\phi_\twsC$ sends $\langle h_1,\htwC\rangle_\TheOrder=\langle\htwC,h_1\rangle_\TheOrder$ to 
\[
\langle\htwC,(\htwC)^{-1}h_1\htwC\rangle_\TheOrder=\langle(\htwC)^{-1}h_1\htwC,\htwC\rangle_\TheOrder. 
\]
Following \ref{prop:sec_pres_free_prod_2fact_par_conj_Artin-rel_length_m}, this relation is equivalent to $\langle h_1,\htwC\rangle_\TheOrder=\langle\htwC,h_1\rangle_\TheOrder$. This proves that $\Z_2(D_{\cycm})=A(I_2(\TheOrder))\rtimes\cycm$. 
\end{proof}

\subsection*{Orbifold braid groups with one puncture}

If we consider the braid group $\Z_\TheStrand(D_{\cycm}(1))$ with $\TheOrder$ additional punctures, we identify the semidirect product structure from Theorem \ref{thm-intro:general_Allcock}\ref{thm-intro:general_Allcock_it3}. 

\begin{proposition}
\label{prop:sec_pres_free_prod_subgrp_1fact}
For $\TheStrand\geq3$, the presentation of $\Z_\TheStrand(D_{\cycm}(1))$ given in Theorem~\textup{\ref{thm:pres_Z_n}} is isomorphic to the group generated by $h_1,...,h_{\TheStrand-1},h_{\TheStrand-1}^\twsC,\twsP,\bar{\twsC}$ with defining relations 
\begin{enumerate}[label={\textup{(R\arabic*)}},ref={\thetheorem(R\arabic*)}]
\item 
\label{prop:sec_pres_free_prod_subgrp_1fact_relR1} 
braid and commutator relations for the generators $h_1,...,h_{\TheStrand-1},h_{\TheStrand-1}^\twsC$, 
\item 
\label{prop:sec_pres_free_prod_subgrp_1fact_relR2}
$\twsP h_1\twsP h_1=h_1\twsP h_1\twsP$, $\;[\twsP,h_\Strand]=1$ \; for \; $2\leq\Strand<\TheStrand\;$ and $\;[\twsP,h_{\TheStrand-1}^\twsC]=1$, 
\item 
\label{prop:sec_pres_free_prod_subgrp_1fact_relR3}
$\langle h_{\TheStrand-1},h_{\TheStrand-1}^\twsC\rangle_\TheOrder=\langle h_{\TheStrand-1}^\twsC,h_{\TheStrand-1}\rangle_\TheOrder$, 
\item 
\label{prop:sec_pres_free_prod_subgrp_1fact_rel5}
$(h_{\TheStrand-1}h_{\TheStrand-1}^\twsC h_{\TheStrand-2})^2=(h_{\TheStrand-2}h_{\TheStrand-1}h_{\TheStrand-1}^\twsC)^2$, 
\end{enumerate}
\begin{enumerate}[label={\textup{(S\arabic*)}},ref={\thetheorem(S\arabic*)}]
\item 
\label{prop:sec_pres_free_prod_subgrp_1fact_relS1} 
$\bar{\twsC}^\TheOrder=1$, 
\end{enumerate}
\begin{enumerate}[label={\textup{(C\arabic*)}},ref={\thetheorem(C\arabic*)}]
\item 
\label{prop:sec_pres_free_prod_subgrp_1fact_relC1}
$\bar{\twsC}h_\Strand\bar{\twsC}^{-1}=h_\Strand$ \; for \; $1\leq\Strand\leq\TheStrand-2\;$ and $\;\bar{\twsC}\twsP\bar{\twsC}^{-1}=\twsP$ and 
\item 
\label{prop:sec_pres_free_prod_subgrp_1fact_relC2}
$\bar{\twsC}h_{\TheStrand-1}\bar{\twsC}^{-1}=h_{\TheStrand-1}^\twsC\;$ and $\;\bar{\twsC}h_{\TheStrand-1}^\twsC\bar{\twsC}^{-1}=\left(h_{\TheStrand-1}^\twsC\right)^{-1}h_{\TheStrand-1}h_{\TheStrand-1}^\twsC$. 
\end{enumerate}
\end{proposition}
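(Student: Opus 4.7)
The strategy mirrors Proposition \ref{prop:sec_pres_free_prod_2fact}: let $A$ denote the presentation of $\Z_\TheStrand(D_{\cycm}(1))$ from Theorem \ref{thm:pres_Z_n} (specialised to $\TheCone=1$, $\ThePct=1$) and let $B$ denote the presentation in the statement. I will define mutually inverse maps
\begin{align*}
\varphi:A\to B,\ & h_\Strand\mapsto h_\Strand,\ \twP{1}\mapsto\twsP,\ \twC{1}\mapsto h_1\cdots h_{\TheStrand-1}\bar{\twsC}h_{\TheStrand-1}^{-1}\cdots h_1^{-1},\\
\psi:B\to A,\ & h_\Strand\mapsto h_\Strand,\ \twsP\mapsto\twP{1},\ \bar{\twsC}\mapsto\twistC{\TheStrand}{1},\ h_{\TheStrand-1}^\twsC\mapsto\twistC{\TheStrand}{1}h_{\TheStrand-1}\twistC{\TheStrand}{1}^{-1}.
\end{align*}
By \eqref{eq:def_a_kc} we have $\twistC{\TheStrand}{1}=h_{\TheStrand-1}^{-1}\cdots h_1^{-1}\twC{1}h_1\cdots h_{\TheStrand-1}$, so $\varphi\circ\psi$ and $\psi\circ\varphi$ are the identity on generators modulo defining relations. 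Once $\varphi$ and $\psi$ are shown to preserve relations, Theorem \ref{thm:von_Dyck} produces inverse isomorphisms.

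\textbf{Verification for $\psi$.} The relations of $B$ pull back to $A$ essentially directly via Lemma \ref{lem:psi_rel}: relation \ref{prop:sec_pres_free_prod_subgrp_1fact_relS1} is \ref{lem:psi_rel_S1}, \ref{prop:sec_pres_free_prod_subgrp_1fact_relR3} is the $\nu=1$ case of \ref{lem:psi_rel_R2}, and \ref{prop:sec_pres_free_prod_subgrp_1fact_rel5} is the $\nu=1$ case of \ref{lem:psi_rel_R4}. The first halves of \ref{prop:sec_pres_free_prod_subgrp_1fact_relC1} and \ref{prop:sec_pres_free_prod_subgrp_1fact_relC2} are \ref{lem:psi_rel_C1} and \ref{lem:psi_rel_C3}; the second half of \ref{prop:sec_pres_free_prod_subgrp_1fact_relC2} rearranges into \ref{lem:psi_rel_C2} by the same multiplication trick used in \ref{prop:sec_pres_free_prod_2fact_par_th1th1=h1th1t}. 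The braid and commutator relations \ref{prop:sec_pres_free_prod_subgrp_1fact_relR1} involving $\psi(h_{\TheStrand-1}^\twsC)=\twistC{\TheStrand}{1}h_{\TheStrand-1}\twistC{\TheStrand}{1}^{-1}$ reduce to the usual braid relations among the $h_\Strand$ together with $[h_\Strand,\twistC{\TheStrand}{1}]=1$ for $\Strand\leq\TheStrand-2$ from \ref{lem:psi_rel_C1}. Finally \ref{prop:sec_pres_free_prod_subgrp_1fact_relR2} combines \ref{cor:pres_orb_braid_free_prod_rel3}, \ref{cor:pres_orb_braid_free_prod_rel4} and \ref{lem:psi_rel_C3}. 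I would summarise these checks in a cross-reference table in the style of Table \ref{tab:rel_psi}.

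\textbf{Verification for $\varphi$.} Most original relations are routine under $\varphi$: $\twC{1}^\TheOrder=1$ becomes a conjugate of $\bar{\twsC}^\TheOrder$, killed by \ref{prop:sec_pres_free_prod_subgrp_1fact_relS1}; the braid relations in the $h_\Strand$ are trivially preserved; $[\twP{1},h_\Strand]=1$ for $\Strand\geq2$ is part of \ref{prop:sec_pres_free_prod_subgrp_1fact_relR2}; and $[\twP{1},h_1^{-1}\twC{1}h_1]=1$ from \ref{cor:pres_orb_braid_free_prod_rel5}, after cancelling the outer $h_1$'s, becomes $[\twsP,h_2\cdots h_{\TheStrand-1}\bar{\twsC}h_{\TheStrand-1}^{-1}\cdots h_2^{-1}]=1$, immediate from $[\twsP,h_\Strand]=1$ for $\Strand\geq2$ (in \ref{prop:sec_pres_free_prod_subgrp_1fact_relR2}) and $[\twsP,\bar{\twsC}]=1$ (from \ref{prop:sec_pres_free_prod_subgrp_1fact_relC1}). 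The relation $[\twC{1},h_\Strand]=1$ for $\Strand\geq2$ is the verbatim analog of step \ref{prop:sec_pres_free_prod_2fact_par_tau_hj}: sweep $h_\Strand$ through the conjugating word $h_1\cdots h_{\TheStrand-1}$ using the braid relations and $[\bar{\twsC},h_\Strand]=1$ for $\Strand\leq\TheStrand-2$ from \ref{prop:sec_pres_free_prod_subgrp_1fact_relC1}.

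\textbf{The main obstacle.} The hard part is verifying that $\varphi$ preserves $[h_1\twC{1}h_1,\twC{1}]=1$, which is the verbatim analog of \ref{prop:sec_pres_free_prod_2fact_par_h1_tau_h1_tau}. The needed input is the auxiliary identity $[h_{\TheStrand-1}\bar{\twsC}h_{\TheStrand-1},\bar{\twsC}]=1$, which I would derive from the second half of \ref{prop:sec_pres_free_prod_subgrp_1fact_relC2} by the same trick as in \ref{prop:sec_pres_free_prod_2fact_par_th1th1=h1th1t}. Given this, the two iterative bracket-commutation calculations that sweep the two copies of $\bar{\twsC}$ past one another through the word $h_1\cdots h_{\TheStrand-1}$ are identical to those carried out in \ref{prop:sec_pres_free_prod_2fact_par_h1_tau_h1_tau}, with $\twsC'$ replaced by $\bar{\twsC}$ and $\twC{2}$ replaced by $\twC{1}$ throughout; I would therefore dispatch this step by a clean reference rather than repeating the manipulation. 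With all relations verified, $\varphi$ and $\psi$ descend to mutually inverse homomorphisms, completing the proof.
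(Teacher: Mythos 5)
Your proposal is correct and follows essentially the same route as the paper: the same pair of assignments $\varphi$ and $\psi$, the same reduction of the $\psi$-direction to Lemma \ref{lem:psi_rel}, and the same two nontrivial $\varphi$-checks (the analogues of \ref{prop:sec_pres_free_prod_2fact_par_tau_hj} and \ref{prop:sec_pres_free_prod_2fact_par_h1_tau_h1_tau}, the latter via the auxiliary identity $[h_{\TheStrand-1}\bar{\twsC}h_{\TheStrand-1},\bar{\twsC}]=1$). The only slip is a minor cross-referencing one: the image under $\psi$ of the first half of \ref{prop:sec_pres_free_prod_subgrp_1fact_relC2} is a tautology, while it is the second half of \ref{prop:sec_pres_free_prod_subgrp_1fact_relC1} that corresponds to \ref{lem:psi_rel_C3}.
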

\begin{proof}
Let $A$ be the group given by the presentation from Theorem \ref{thm:pres_Z_n} in the case $\TheCone=1$ and $\ThePct=1$. Moreover, let $B$ denote the group given by the presentation above. As in Proposition~\ref{prop:sec_pres_free_prod_2fact}, we define the following assignments with respect to the definitions of $\twsP, h_{\TheStrand-1}^\twsC$ and $\bar{\twsC}$: 
\begin{align*}
\varphi:A & \rightarrow B, & \text{and} & & \psi: B & \rightarrow A
\\
h_\Strand & \mapsto h_\Strand \; \text{ for } \; 1\leq\Strand<\TheStrand, & & & h_\Strand & \mapsto h_\Strand \; \text{ for } \; 1\leq\Strand<\TheStrand,
\\
\twP{1} & \mapsto \twsP, & & & \twsP & \mapsto \twP{1}, 
\\
\twC{1} & \mapsto h_1...h_{\TheStrand-1}\bar{\twsC}h_{\TheStrand-1}^{-1}...h_1^{-1} & & & \bar{\twsC} & \mapsto h_{\TheStrand-1}^{-1}...h_1^{-1}\twC{1}h_1...h_{\TheStrand-1}, 
\\
& & & & h_{\TheStrand-1}^\twsC & \mapsto \psi(\bar{\twsC})h_{\TheStrand-1}\psi(\bar{\twsC})^{-1}. 
\end{align*}
By \eqref{eq:def_a_kc}, the element $\psi(\bar{\twsC})$ coincides with $\twistC{\TheStrand}{1}$.  

As in the proof of Proposition \ref{prop:sec_pres_free_prod_2fact}, it follows directly that the assignments $\psi\circ\varphi$ describe the identical map on the generators of $A$ and the assignments $\varphi\circ\psi$ describe the identical map on the set of generators of $B$ (modulo relations in $B$). 

To deduce that the assignments $\varphi$ and $\psi$ induce homomorphisms between the groups presented by $A$ and $B$ we will apply Theorem \ref{thm:von_Dyck}. Therefore, it remains to check that $\varphi$ and $\psi$ preserve the relation of $A$ and $B$, respectively. For the assignments $\psi$, this follows from the relations in Lemma~\ref{lem:psi_rel}. We summarize this observation in Table~\ref{tab:subgrp_rel_psi}.  
\renewcommand{\arraystretch}{1.75}
{\small
{\setlength\LTleft{-0.175\textwidth}
\begin{tabularx}{1.35\linewidth}{rl|l}
\caption{}
\label{tab:subgrp_rel_psi} \\
& Relations from $B$ & Images under $\psi$ \\
\cline{2-3}
\noalign{\vskip\doublerulesep\vskip-\arrayrulewidth}
\cline{2-3}
\endfirsthead
& Relations from $B$ & Images under $\psi$ \\
\cline{2-3}
\noalign{\vskip\doublerulesep\vskip-\arrayrulewidth}
\cline{2-3}
\endhead
\multicolumn{3}{r}{\footnotesize(To be continued)}
\endfoot
\cline{2-3}
\noalign{\vskip\doublerulesep\vskip-\arrayrulewidth}
\cline{2-3}
\endlastfoot
\multirow{3}{4em}{\ref{prop:sec_pres_free_prod_subgrp_1fact_relR1}$\left\{\rule{0cm}{1.65cm}\right.$}& \doubletable{braid and commutator relations \\ for $h_1,...,h_{\TheStrand-1}$} & \doubletable{braid and commutator relations \\ for $h_1,...,h_{\TheStrand-1}$ are covered by \ref{cor:pres_orb_braid_free_prod_rel2}}
\\ \cline{2-3} 
& $[h_\Strand,h_{\TheStrand-1}^\twsC]=1$ \; for \; $\Strand\leq\TheStrand-3$ & $[h_\Strand,\twistC{\TheStrand}{1}h_{\TheStrand-1}\twistC{\TheStrand}{1}^{-1}]=1$ by \ref{cor:pres_orb_braid_free_prod_rel2} and \ref{lem:psi_rel_C1}
\\ \cline{2-3} 
& $h_{\TheStrand-2}h_{\TheStrand-1}^\twsC h_{\TheStrand-2}=h_{\TheStrand-1}^\twsC h_{\TheStrand-2}h_{\TheStrand-1}^\twsC$ & \doubletable{$h_{\TheStrand-2}\twistC{\TheStrand}{1}h_{\TheStrand-1}\twistC{\TheStrand}{1}^{-1}h_{\TheStrand-2}=\twistC{\TheStrand}{1}h_{\TheStrand-1}\twistC{\TheStrand}{1}^{-1}h_{\TheStrand-2}\twistC{\TheStrand}{1}h_{\TheStrand-1}\twistC{\TheStrand}{1}^{-1}$ \\ by \ref{cor:pres_orb_braid_free_prod_rel2} and \ref{lem:psi_rel_C1}} 
\\ \cline{2-3} 
\multirow{3}{4em}{\ref{prop:sec_pres_free_prod_subgrp_1fact_relR2}$\left\{\rule{0cm}{1.05cm}\right.$}& $\twsP h_1\twsP h_1=h_1\twsP h_1\twsP$ & $\twP{1}h_1\twP{1}h_1\stackrel{\ref{cor:pres_orb_braid_free_prod_rel4}}=h_1\twP{1}h_1\twP{1}$ 
\\ \cline{2-3} 
& $[\twsP,h_\Strand]=1$ \; for \; $2\leq\Strand<\TheStrand$ & $[\twP{1},h_\Strand]\stackrel{\ref{cor:pres_orb_braid_free_prod_rel3}}=1$ 
\\ \cline{2-3}  
& $[\twsP,h_{\TheStrand-1}^\twsC]=1$ & $[\twP{1},\twistC{\TheStrand}{1}h_{\TheStrand-1}\twistC{\TheStrand}{1}^{-1}]=1$ by \ref{cor:pres_orb_braid_free_prod_rel3} and \ref{lem:psi_rel_C3}
\\ \cline{2-3}  
\multirow{1}{4em}{\ref{prop:sec_pres_free_prod_subgrp_1fact_relR3}$\left\{\rule{0cm}{0.3cm}\right.$}& $\langle h_{\TheStrand-1},h_{\TheStrand-1}^\twsC\rangle_\TheOrder=\langle h_{\TheStrand-1}^\twsC,h_{\TheStrand-1}\rangle_\TheOrder$ & $\langle h_{\TheStrand-1},\twistC{\TheStrand}{1}h_{\TheStrand-1}\twistC{\TheStrand}{1}^{-1}\rangle_\TheOrder\stackrel{\text{\ref{lem:psi_rel_R2}}}=\langle \twistC{\TheStrand}{1}h_{\TheStrand-1}\twistC{\TheStrand}{1}^{-1},h_{\TheStrand-1}\rangle_\TheOrder$ 
\\ \cline{2-3} 
\multirow{1}{4em}{\ref{prop:sec_pres_free_prod_subgrp_1fact_rel5}$\left\{\rule{0cm}{0.3cm}\right.$}& $(h_{\TheStrand-1}h_{\TheStrand-1}^\twsC h_{\TheStrand-2})^2=(h_{\TheStrand-2}h_{\TheStrand-1}h_{\TheStrand-1}^\twsC)^2$ & $(h_{\TheStrand-1}\twistC{\TheStrand}{1}h_{\TheStrand-1}\twistC{\TheStrand}{1}^{-1}h_{\TheStrand-2})^2\stackrel{\text{\ref{lem:psi_rel_R4}}}=(h_{\TheStrand-2}h_{\TheStrand-1}\twistC{\TheStrand}{1}h_{\TheStrand-1}\twistC{\TheStrand}{1}^{-1})^2$
\\ \cline{2-3} 
\multirow{1}{3.75em}{\ref{prop:sec_pres_free_prod_subgrp_1fact_relS1}$\left\{\rule{0cm}{0.3cm}\right.$}& $\bar{\twsC}^\TheOrder=1$ & $\twistC{\TheStrand}{1}^\TheOrder\stackrel{\text{\ref{lem:psi_rel_S1}}}=1$ 
\\ \cline{2-3} 
\multirow{2}{4em}{\ref{prop:sec_pres_free_prod_subgrp_1fact_relC1}$\left\{\rule{0cm}{0.7cm}\right.$}& $[h_\Strand,\bar{\twsC}]=1$ \; for \; $\Strand\leq\TheStrand-2$ & $[h_\Strand,\twistC{\TheStrand}{1}]\stackrel{\text{\ref{lem:psi_rel_C1}}}=1$
\\ \cline{2-3} 
& $[\twsP,\bar{\twsC}]=1$ & $[\twP{1},\twistC{\TheStrand}{1}]\stackrel{\text{\ref{lem:psi_rel_C3}}}=1$ 
\\ \cline{2-3} 
\multirow{2}{4em}{\ref{prop:sec_pres_free_prod_subgrp_1fact_relC2}$\left\{\rule{0cm}{0.7cm}\right.$}& $\bar{\twsC}h_{\TheStrand-1}\bar{\twsC}^{-1}=h_{\TheStrand-1}^\twsC$ & $\twistC{\TheStrand}{1}h_{\TheStrand-1}\twistC{\TheStrand}{1}^{-1}=\twistC{\TheStrand}{1}h_{\TheStrand-1}\twistC{\TheStrand}{1}^{-1}$ 
\\ \cline{2-3} 
& $\bar{\twsC}h_{\TheStrand-1}^\twsC\bar{\twsC}^{-1}=(h_{\TheStrand-1}^\twsC)^{-1}h_{\TheStrand-1}h_{\TheStrand-1}^\twsC$ & $\twistC{\TheStrand}{1}^2h_{\TheStrand-1}\twistC{\TheStrand}{1}^{-2}=\twistC{\TheStrand}{1}h_{\TheStrand-1}^{-1}\twistC{\TheStrand}{1}^{-1}h_{\TheStrand-1}\twistC{\TheStrand}{1}h_{\TheStrand-1}\twistC{\TheStrand}{1}^{-1}$ by \ref{lem:psi_rel_C2} 
\end{tabularx}
}
}

Table \ref{tab:subgrp_rel_varphi} shows which relations are necessary that $\varphi$ preserves the relations from $A$. For the most of these relations, this follows directly from $B$. For the less obvious relations, we refer to the explanations below Table \ref{tab:subgrp_rel_varphi}. 
\renewcommand{\arraystretch}{1.75}
{\setlength\LTleft{-0.125\textwidth}
\begin{tabularx}{1.25\linewidth}{rl|l}
\caption{}
\label{tab:subgrp_rel_varphi} \\
& Relations from $A$ & Images under $\varphi$ \\
\cline{2-3}
\noalign{\vskip\doublerulesep\vskip-\arrayrulewidth}
\cline{2-3}
\endfirsthead
& Relations from $A$ & Images under $\varphi$ \\
\cline{2-3}
\noalign{\vskip\doublerulesep\vskip-\arrayrulewidth}
\cline{2-3}
\endhead
\multicolumn{3}{r}{\footnotesize(To be continued)}
\endfoot
\cline{2-3}
\noalign{\vskip\doublerulesep\vskip-\arrayrulewidth}
\cline{2-3}
\endlastfoot
\multirow{1}{3em}{\ref{cor:pres_orb_braid_free_prod_rel1}$\left\{\rule{0cm}{0.4cm}\right.$}& $\twC{1}^\TheOrder=1$ & $(h_1...h_{\TheStrand-1}\bar{\twsC}h_{\TheStrand-1}^{-1}...h_1^{-1})^\TheOrder=1$ by \ref{prop:sec_pres_free_prod_subgrp_1fact_relS1} 
\\ \cline{2-3} 
\vspace*{-2pt}\multirow{1}{3em}{\ref{cor:pres_orb_braid_free_prod_rel2}$\left\{\rule{0cm}{0.6cm}\right.$}& \doubletable{braid and commutator relations \\ of $h_1,...,h_{\TheStrand-1}$} & \doubletable{braid and commutator relations \\ of $h_1,...,h_{\TheStrand-1}$ are covered by \ref{prop:sec_pres_free_prod_subgrp_1fact_relR1}}
\\ \cline{2-3} 
\multirow{2}{3em}{\ref{cor:pres_orb_braid_free_prod_rel3}$\left\{\rule{0cm}{0.75cm}\right.$}& $[\twP{1},h_\Strand]=1$ \; for \; $\Strand\geq2$ & $[\twsP,h_\Strand]\stackrel{\text{\ref{prop:sec_pres_free_prod_subgrp_1fact_relR2}}}=1$
\\ \cline{2-3} 
& $[\twC{1},h_\Strand]=1$ \; for \; $\Strand\geq2$ & $[h_1...h_{\TheStrand-1}\bar{\twsC}h_{\TheStrand-1}^{-1}...h_1^{-1},h_\Strand]\stackrel{(\ref{prop:sec_pres_free_prod_subgrp_1fact_eq_hj_tau})}=1$, 
\\ \cline{2-3} 
\multirow{2}{3em}{\ref{cor:pres_orb_braid_free_prod_rel4}$\left\{\rule{0cm}{0.75cm}\right.$}& $[h_1\twP{1}h_1,\twP{1}]=1$ & $[h_1\twsP h_1,\twsP]\stackrel{\text{\ref{prop:sec_pres_free_prod_subgrp_1fact_relR2}}}=1$ 
\\ \cline{2-3} 
& $[h_1\twC{1}h_1,\twC{1}]=1$ & $[h_1\varphi(\twC{1})h_1,\varphi(\twC{1})]\stackrel{(\ref{prop:sec_pres_free_prod_subgrp_1fact_eq_h1_tau})}=1$
\\ \cline{2-3} 
\multirow{1}{3em}{\ref{cor:pres_orb_braid_free_prod_rel5}$\left\{\rule{0cm}{0.4cm}\right.$}& $[\twP{1},h_1^{-1}\twC{1}h_1]=1$ & $[\twsP,h_2...h_{\TheStrand-1}\bar{\twsC}h_{\TheStrand-1}^{-1}...h_2^{-1}]=1$ by \ref{prop:sec_pres_free_prod_subgrp_1fact_relR2} and \ref{prop:sec_pres_free_prod_subgrp_1fact_relC1} 
\end{tabularx}
}
Similar to \ref{prop:sec_pres_free_prod_2fact_par_tau_hj} we obtain  
\begin{equation}
\label{prop:sec_pres_free_prod_subgrp_1fact_eq_hj_tau}
h_\Strand\varphi(\twC{1})=\varphi(\twC{1})h_\Strand \text{ for } 1\leq\Strand\leq\TheStrand-2. 
\end{equation}

The relation below follows as in \ref{prop:sec_pres_free_prod_2fact_par_h1_tau_h1_tau} 
\begin{equation}
\label{prop:sec_pres_free_prod_subgrp_1fact_eq_h1_tau}
h_1\varphi(\twC{1})h_1\varphi(\twC{1})=\varphi(\twC{1})h_1\varphi(\twC{1})h_1. 
\end{equation} 

Consequently, by Theorem \ref{thm:von_Dyck}, the assignments $\varphi$ and $\psi$ induce inverse homomorphisms. Hence, the groups $A$ and $B$ are isomorphic, i.e.\ $\Z_\TheStrand(D_{\cycm}(1))$ has the above presentation. 
\end{proof}

\begin{theorem}
\label{thm:semidir_prod_subgrp_1fact}
For $\TheStrand\geq3$, the presentation from Proposition \textup{\ref{prop:sec_pres_free_prod_subgrp_1fact}} implies, that the group $\Z_\TheStrand(D_{\cycm}(1))$ has a semidirect product structure 
\[
\tilde{A}(\bar{\Delta}_\TheStrand^\TheOrder)\rtimes\cycm 
\]
with $\cycm=\langle\bar{\twsC}\rangle$ and $\tilde{A}(\bar{\Delta}_\TheOrder^\TheStrand)=\langle \twsP,h_1,...,h_{\TheStrand-1},h_{\TheStrand-1}^\twsC\rangle$. 

For $\TheOrder=2$, the group $\tilde{A}(\bar{\Delta}_\TheStrand^2)$ is an Artin group of type $\tilde{B}_\TheStrand$. 
\end{theorem}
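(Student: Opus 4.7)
The plan is to mirror the argument of Theorem~\ref{thm:semidir_prod_2fact}, applying Lemma~\ref{lem:semidir_prod_pres} to the presentation of $\Z_\TheStrand(D_{\cycm}(1))$ established in Proposition~\ref{prop:sec_pres_free_prod_subgrp_1fact}. I would split the generators into $X=\{\twsP,h_1,\ldots,h_{\TheStrand-1},h_{\TheStrand-1}^{\twsC}\}$ (the candidate normal subgroup $\tilde{A}(\bar{\Delta}_\TheStrand^\TheOrder)$ presented by \ref{prop:sec_pres_free_prod_subgrp_1fact_relR1}--\ref{prop:sec_pres_free_prod_subgrp_1fact_rel5}) and $Y=\{\bar{\twsC}\}$ (the candidate quotient $\cycm$ presented by \ref{prop:sec_pres_free_prod_subgrp_1fact_relS1}). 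The relations \ref{prop:sec_pres_free_prod_subgrp_1fact_relC1}--\ref{prop:sec_pres_free_prod_subgrp_1fact_relC2} describe conjugation and induce assignments $\phi_{\bar{\twsC}}$ on $X$ with $\twsP\mapsto\twsP$, $h_\Strand\mapsto h_\Strand$ for $\Strand\leq\TheStrand-2$, $h_{\TheStrand-1}\mapsto h_{\TheStrand-1}^{\twsC}$, and $h_{\TheStrand-1}^{\twsC}\mapsto (h_{\TheStrand-1}^{\twsC})^{-1}h_{\TheStrand-1}h_{\TheStrand-1}^{\twsC}$. Following Remark~\ref{rem:semidir_prod_pres}, I would verify the two conditions of Lemma~\ref{lem:semidir_prod_pres_it2} in turn.

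For Step~\ref{rem:semidir_prod_pres_step1}, I need $\phi:\bar{\twsC}\mapsto\phi_{\bar{\twsC}}$ to preserve the single relation $\bar{\twsC}^\TheOrder=1$. Only the images of $h_{\TheStrand-1}$ and $h_{\TheStrand-1}^{\twsC}$ are non-trivial to track, and here I would reuse verbatim the induction argument of \eqref{prop:sec_pres_free_prod_2fact_eq_conj_tau^k}, replacing the pair $(h_1,\htwC)$ by $(h_{\TheStrand-1},h_{\TheStrand-1}^{\twsC})$; relation \ref{prop:sec_pres_free_prod_subgrp_1fact_relR3} then gives $\phi_{\bar{\twsC}}^\TheOrder(h_{\TheStrand-1})=h_{\TheStrand-1}$ exactly as in \eqref{prop:sec_pres_free_prod_2fact_eq_conj_tau_h1}, and \ref{prop:sec_pres_free_prod_subgrp_1fact_relC2} gives the analogue of \eqref{prop:sec_pres_free_prod_2fact_eq_conj_tau_htau} for $h_{\TheStrand-1}^{\twsC}$.

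For Step~\ref{rem:semidir_prod_pres_step2}, I need to verify that $\phi_{\bar{\twsC}}$ preserves \ref{prop:sec_pres_free_prod_subgrp_1fact_relR1}--\ref{prop:sec_pres_free_prod_subgrp_1fact_rel5}. Most images are immediate: \ref{prop:sec_pres_free_prod_subgrp_1fact_relR2} is preserved because $\twsP$ and all $h_\Strand$ with $\Strand\leq\TheStrand-2$ are fixed and $[\twsP,h_{\TheStrand-1}^{\twsC}]=1$ together with $[\twsP,h_{\TheStrand-1}]=1$ handles the conjugate; the braid and commutator relations in \ref{prop:sec_pres_free_prod_subgrp_1fact_relR1} not involving $h_{\TheStrand-1}$ map to themselves, and the relation $h_{\TheStrand-2}h_{\TheStrand-1}h_{\TheStrand-2}=h_{\TheStrand-1}h_{\TheStrand-2}h_{\TheStrand-1}$ maps to the analogous braid relation for $h_{\TheStrand-1}^{\twsC}$, which is in \ref{prop:sec_pres_free_prod_subgrp_1fact_relR1}. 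Relation \ref{prop:sec_pres_free_prod_subgrp_1fact_relR3} is preserved by the same computation as \ref{prop:sec_pres_free_prod_2fact_par_conj_Artin-rel_length_m}, and \ref{prop:sec_pres_free_prod_subgrp_1fact_rel5} maps to itself after cancellation. The hard part will be verifying that the braid relation $h_{\TheStrand-2}h_{\TheStrand-1}^{\twsC}h_{\TheStrand-2}=h_{\TheStrand-1}^{\twsC}h_{\TheStrand-2}h_{\TheStrand-1}^{\twsC}$ is preserved: its $\phi_{\bar{\twsC}}$-image involves conjugates of $h_{\TheStrand-1}$ by $h_{\TheStrand-1}^{\twsC}$, and precisely this case requires the extra relation \ref{prop:sec_pres_free_prod_subgrp_1fact_rel5}, mimicking the manipulation carried out in \ref{prop:sec_pres_free_prod_2fact_par_cpxbr_rel} (with the roles of left and right ends swapped and with $\htwC$ replaced by $h_{\TheStrand-1}^{\twsC}$). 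Since $\phi_{\bar{\twsC}}^\TheOrder=\id$, the induced endomorphism is bijective with inverse $\phi_{\bar{\twsC}}^{\TheOrder-1}$, hence an automorphism of $\tilde{A}(\bar{\Delta}_\TheStrand^\TheOrder)$.

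Finally, for the case $\TheOrder=2$, relation \ref{prop:sec_pres_free_prod_subgrp_1fact_relR3} collapses to $[h_{\TheStrand-1},h_{\TheStrand-1}^{\twsC}]=1$, so the $\phi_{\bar{\twsC}}$-image of the braid relation $h_{\TheStrand-2}h_{\TheStrand-1}^{\twsC}h_{\TheStrand-2}=h_{\TheStrand-1}^{\twsC}h_{\TheStrand-2}h_{\TheStrand-1}^{\twsC}$ simplifies back to $h_{\TheStrand-2}h_{\TheStrand-1}h_{\TheStrand-2}=h_{\TheStrand-1}h_{\TheStrand-2}h_{\TheStrand-1}$, which is already in \ref{prop:sec_pres_free_prod_subgrp_1fact_relR1}. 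Thus \ref{prop:sec_pres_free_prod_subgrp_1fact_rel5} becomes redundant, and the remaining relations on $\twsP,h_1,\ldots,h_{\TheStrand-1},h_{\TheStrand-1}^{\twsC}$ are exactly the Artin relations of type $\tilde{B}_\TheStrand$ (with $\twsP$ and $h_{\TheStrand-1}^{\twsC}$ as the two ``end'' generators each connected by a weight-$4$ edge to their neighbour). This identifies $\tilde{A}(\bar{\Delta}_\TheStrand^2)$ with $A(\tilde{B}_\TheStrand)$, completing the proof.
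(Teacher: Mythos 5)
Your proposal is correct and follows essentially the same route as the paper's proof: the same split of generators and relations for Lemma \ref{lem:semidir_prod_pres}, the same reuse of the computations \eqref{prop:sec_pres_free_prod_2fact_eq_conj_tau^k}--\eqref{prop:sec_pres_free_prod_2fact_eq_conj_tau_htau} with $(h_1,\htwC)$ replaced by $(h_{\TheStrand-1},h_{\TheStrand-1}^{\twsC})$, the same identification of the $\phi_{\bar{\twsC}}$-image of $h_{\TheStrand-2}h_{\TheStrand-1}^{\twsC}h_{\TheStrand-2}=h_{\TheStrand-1}^{\twsC}h_{\TheStrand-2}h_{\TheStrand-1}^{\twsC}$ as the step requiring \ref{prop:sec_pres_free_prod_subgrp_1fact_rel5} via the manipulation of \ref{prop:sec_pres_free_prod_2fact_par_cpxbr_rel}, and the same collapse for $\TheOrder=2$.
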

\begin{proof}
To prove that the presentation from Proposition \ref{prop:sec_pres_free_prod_subgrp_1fact} induces the above semidirect product structures, we apply Lemma \ref{lem:semidir_prod_pres}. 

Therefore, we identify the generators and relations of the semidirect factors: The normal subgroup is generated by $\twsP,h_1,...,h_{\TheStrand-1}$ and $h_{\TheStrand-1}^\twsC$ with relations from \ref{prop:sec_pres_free_prod_subgrp_1fact_relR1}-\ref{prop:sec_pres_free_prod_subgrp_1fact_rel5}. For $\TheStrand\geq3$, i.e.\ the group that we call $\tilde{A}(\bar{\Delta}_\TheStrand^\TheOrder)$. The quotient is generated by $\bar{\twsC}$ with the relation from \ref{prop:sec_pres_free_prod_subgrp_1fact_relS1}, i.e.\ the group $\cycm$. Moreover, the presentation contains the relations \ref{prop:sec_pres_free_prod_subgrp_1fact_relC1} and \ref{prop:sec_pres_free_prod_subgrp_1fact_relC2} which are of the form $\bar{\twsC}h\bar{\twsC}^{-1}=\phi_{\bar{\twsC}}(h)$ with 
\[
h\in\{h_1,...,h_{\TheStrand-1},h_{\TheStrand-1}^{\twsC}\}
\] 
and $\phi_{\bar{\twsC}}(h)$ a word in the generators of $\tilde{A}(\bar{\Delta}_\TheStrand^\TheOrder)$. It remains to check that this presentation satisfies the conditions from Lemma \ref{lem:semidir_prod_pres_it2}. There we require that the assignments $h\mapsto\phi_{\bar{\twsC}}(h)$ induce an automorphism $\phi_{\bar{\twsC}}\in\Aut(\tilde{A}(\bar{\Delta}_\TheStrand^\TheOrder))$ and the assignments $\phi:\bar{\twsC}\mapsto\phi_{\bar{\twsC}}$ induce a homomorphism $\cycm\rightarrow\Aut(\tilde{A}(\bar{\Delta}_\TheStrand^\TheOrder))$. Again, we follow the Steps~\ref{rem:semidir_prod_pres_step1} and \ref{rem:semidir_prod_pres_step2} described in Remark \ref{rem:semidir_prod_pres}.

\begin{step}
\label{thm:semidir_prod_subgrp_1fact_step1}
The assignments $\phi$ induce a homomorphism. 
\end{step}

The relations from \ref{prop:sec_pres_free_prod_subgrp_1fact_relC1} and \ref{prop:sec_pres_free_prod_subgrp_1fact_relC2} induce the following assignments: 
\begin{align*}
\phi_{\bar{\twsC}}: \hspace*{6mm}\twsP & \mapsto \twsP, 
\\
h_\Strand & \mapsto h_\Strand \; \text{ for } \; 1\leq\Strand\leq\TheStrand-2, 
\\
h_{\TheStrand-1} & \mapsto h_{\TheStrand-1}^{\twsC}, 
\\
h_{\TheStrand-1}^{\twsC} & \mapsto (h_{\TheStrand-1}^{\twsC})^{-1}h_{\TheStrand-1}h_{\TheStrand-1}^{\twsC}. 
\end{align*}

Since the group $\cyc{\TheOrder}$ is defined by the relation $\bar{\twsC}^\TheOrder=1$, the assignments $\phi$ induce a homomorphism if $\phi_{\bar{\twsC}}^\TheOrder$ induces the identity. Using the relations $\bar{\twsC}h_{\TheStrand-1}\bar{\twsC}^{-1}=h_{\TheStrand-1}^\twsC$ and $\bar{\twsC}h_{\TheStrand-1}^\twsC\bar{\twsC}^{-1}=(h_{\TheStrand-1}^\twsC)^{-1}h_{\TheStrand-1}h_{\TheStrand-1}^\twsC$ from \ref{prop:sec_pres_free_prod_subgrp_1fact_relC2}, we obtain 
\[
\bar{\twsC}^kh_{\TheStrand-1}\bar{\twsC}^{-k}=\begin{cases}
\langle(h_{\TheStrand-1}^\twsC)^{-1},h_{\TheStrand-1}^{-1}\rangle_{k-1}\langle h_{\TheStrand-1}^\twsC,h_{\TheStrand-1}\rangle_k & \text{ for } k \text{ odd}, 
\\
\langle(h_{\TheStrand-1}^\twsC)^{-1},h_{\TheStrand-1}^{-1}\rangle_{k-1}\langle h_{\TheStrand-1},h_{\TheStrand-1}^\twsC\rangle_k & \text{ for } k \text{ even}. 
\end{cases} 
\]
This follows as in the proof of  (\ref{prop:sec_pres_free_prod_2fact_eq_conj_tau^k}). As in Theorem \ref{thm:semidir_prod_2fact}, the relation $\langle h_{\TheStrand-1}^\twsC,h_{\TheStrand-1}\rangle_\TheOrder=\langle h_{\TheStrand-1},h_{\TheStrand-1}^\twsC\rangle_\TheOrder$ allows us to deduce that $\phi_{\bar{\twsC}}^\TheOrder(h_{\TheStrand-1})=h_{\TheStrand-1}$ and $\phi_{\bar{\twsC}}^\TheOrder(h_{\TheStrand-1}^\twsC)=h_{\TheStrand-1}^\twsC$, see \eqref{prop:sec_pres_free_prod_2fact_eq_conj_tau_h1} and \eqref{prop:sec_pres_free_prod_2fact_eq_conj_tau_htau}. Further, $\phi_{\bar{\twsC}}(h_\Strand)=h_\Strand$ for $1\leq\Strand\leq\TheStrand-2$ and $\phi_{\bar{\twsC}}(\twsP)=\twsP$ by relation \ref{prop:sec_pres_free_prod_subgrp_1fact_relC1}. Thus $\phi_{\bar{\twsC}}^\TheOrder$ fixes each generator of $\tilde{A}(\bar{\Delta}_\TheStrand^\TheOrder)$. Hence, the assignments $\phi$ preserve the defining relations of $\cycm=\langle\bar{\twsC}\rangle$. By Theorem \ref{thm:von_Dyck}, these assignments induce a homomorphism. 

\begin{step}
\label{thm:semidir_prod_subgrp_1fact_step2}
The assignments $\phi_{\bar{\twsC}}:h\mapsto\phi_{\bar{\twsC}}(h)$ induce an automorphism in $\Aut(\tilde{A}(\bar{\Delta}_\TheStrand^\TheOrder)$. 
\end{step}

As explained in Step \ref{rem:semidir_prod_pres_step2} of Remark \ref{rem:semidir_prod_pres}, this only requires to check that the assignments $\phi_{\bar{\twsC}}:h\mapsto\phi_{\bar{\twsC}}(h)$ preserve the relations of $\tilde{A}(\bar{\Delta}_\TheStrand^\TheOrder)$. In Table \ref{tab:subgrp_conj_bar_tau} we list the defining relations of $\tilde{A}(\bar{\Delta}_\TheStrand^\TheOrder)$ and their $\phi_{\bar{\twsC}}$-images. We omit relations $w=w'$ if $\phi_{\bar{\twsC}}(w)=w$ and $\phi_{\bar{\twsC}}(w')=w'$. 
\renewcommand{\arraystretch}{1.75}
{\setlength\LTleft{-0.175\textwidth}
\begin{tabularx}{1.35\linewidth}{rl|l}
\caption{}
\label{tab:subgrp_conj_bar_tau} \\
& Relations from $\tilde{A}(\bar{\Delta}_\TheStrand^\TheOrder)$ & Images under $\phi_{\bar{\twsC}}$ \\
\cline{2-3}
\noalign{\vskip\doublerulesep\vskip-\arrayrulewidth}
\cline{2-3}
\endfirsthead
& Relations from $\tilde{A}(\bar{\Delta}_\TheStrand^\TheOrder)$ & Images under $\phi_{\bar{\twsC}}$ \\
\cline{2-3}
\noalign{\vskip\doublerulesep\vskip-\arrayrulewidth}
\cline{2-3}
\endhead
\multicolumn{3}{r}{\footnotesize(To be continued)}
\endfoot
\cline{2-3}
\noalign{\vskip\doublerulesep\vskip-\arrayrulewidth}
\cline{2-3}
\endlastfoot
\multirow{2}{4em}{\ref{prop:sec_pres_free_prod_subgrp_1fact_relR1}$\left\{\rule{0cm}{0.75cm}\right.$}& $h_{\TheStrand-2}h_{\TheStrand-1}h_{\TheStrand-2}=h_{\TheStrand-1}h_{\TheStrand-2}h_{\TheStrand-1}$ & $h_{\TheStrand-2}h_{\TheStrand-1}^\twsC h_{\TheStrand-2}\stackrel{\text{\ref{prop:sec_pres_free_prod_subgrp_1fact_relR1}}}=h_{\TheStrand-1}^\twsC h_{\TheStrand-2} h_{\TheStrand-1}^\twsC$ 
\\ \cline{2-3} 
& $[h_\Strand,h_{\TheStrand-1}]=1$ \; for \; $\Strand\leq\TheStrand-3$ & $[h_\Strand,h_{\TheStrand-1}^\twsC]\stackrel{\text{\ref{prop:sec_pres_free_prod_subgrp_1fact_relR1}}}=1$ 
\\ \cline{2-3} 
\multirow{2}{4em}{\ref{prop:sec_pres_free_prod_subgrp_1fact_relR1}$\left\{\rule{0cm}{1.2cm}\right.$}& $[h_\Strand,h_{\TheStrand-1}^\twsC]=1$ \; for \; $\Strand\leq\TheStrand-3$ & $[h_\Strand,(h_{\TheStrand-1}^\twsC)^{-1}h_{\TheStrand-1}h_{\TheStrand-1}^\twsC]=1$ by \ref{prop:sec_pres_free_prod_subgrp_1fact_relR1} 
\\ \cline{2-3} 
& $h_{\TheStrand-2}h_{\TheStrand-1}^\twsC h_{\TheStrand-2}=h_{\TheStrand-1}^\twsC h_{\TheStrand-2}h_{\TheStrand-1}^\twsC$ & \doubletable{$h_{\TheStrand-2}(h_{\TheStrand-1}^\twsC)^{-1}h_{\TheStrand-1}h_{\TheStrand-1}^\twsC h_{\TheStrand-2}$
\\
=$\ (h_{\TheStrand-1}^\twsC)^{-1}h_{\TheStrand-1}h_{\TheStrand-1}^\twsC h_{\TheStrand-2}(h_{\TheStrand-1}^\twsC)^{-1}h_{\TheStrand-1}h_{\TheStrand-1}^\twsC$ by (\ref{prop:sec_pres_free_prod_subgrp_1fact_eq_cpxbr_rel})}
\\ \cline{2-3} 
\multirow{1}{4em}{\ref{prop:sec_pres_free_prod_subgrp_1fact_relR2}$\left\{\rule{0cm}{0.35cm}\right.$}& $[\twsP,h_{\TheStrand-1}^\twsC]=1$ & $[\twsP,(h_{\TheStrand-1}^\twsC)^{-1}h_{\TheStrand-1}h_{\TheStrand-1}^\twsC]=1$ by  \ref{prop:sec_pres_free_prod_subgrp_1fact_relR2} 
\\ \cline{2-3} 
\multirow{1}{4em}{\ref{prop:sec_pres_free_prod_subgrp_1fact_relR3}$\left\{\rule{0cm}{0.6cm}\right.$}& $\langle h_{\TheStrand-1},h_{\TheStrand-1}^\twsC\rangle_\TheOrder=\langle h_{\TheStrand-1}^\twsC,h_{\TheStrand-1}\rangle_\TheOrder$ & \doubletable{$\langle h_{\TheStrand-1}^\twsC,(h_{\TheStrand-1}^\twsC)^{-1}h_{\TheStrand-1}h_{\TheStrand-1}^\twsC\rangle_\TheOrder$
\\
=$\ \langle(h_{\TheStrand-1}^\twsC)^{-1}h_{\TheStrand-1}h_{\TheStrand-1}^\twsC,h_{\TheStrand-1}^\twsC\rangle_\TheOrder$ by (\ref{prop:sec_pres_free_prod_subgrp_1fact_eq_conj_Artin-rel_length_m}) }
\\ \cline{2-3} 
\multirow{1}{4em}{\ref{prop:sec_pres_free_prod_subgrp_1fact_rel5}$\left\{\rule{0cm}{0.6cm}\right.$}& $(h_{\TheStrand-1}h_{\TheStrand-1}^\twsC h_{\TheStrand-2})^2=(h_{\TheStrand-2}h_{\TheStrand-1}h_{\TheStrand-1}^\twsC)^2$ & \doubletable{$(\textcolor{\short}{h_{\TheStrand-1}^\twsC(h_{\TheStrand-1}^\twsC)^{-1}}h_{\TheStrand-1}h_{\TheStrand-1}^\twsC h_{\TheStrand-2})^2$
\\
$\stackrel{\text{\ref{prop:sec_pres_free_prod_2fact_relR3}}}=(h_{\TheStrand-2}\textcolor{\short}{h_{\TheStrand-1}^\twsC(h_{\TheStrand-1}^\twsC)^{-1}}h_{\TheStrand-1}h_{\TheStrand-1}^\twsC)^2$ }
\end{tabularx}}
In particular, in Table \ref{tab:subgrp_conj_bar_tau} we refer to the following observations: 
\begin{align*}
\label{prop:sec_pres_free_prod_subgrp_1fact_eq_cpxbr_rel}
& (h_{\TheStrand-1}^\twsC)^{-1}h_{\TheStrand-1}h_{\TheStrand-1}^\twsC h_{\TheStrand-2}(h_{\TheStrand-1}^\twsC)^{-1}h_{\TheStrand-1}h_{\TheStrand-1}^\twsC=h_{\TheStrand-2}(h_{\TheStrand-1}^\twsC)^{-1}h_{\TheStrand-1}h_{\TheStrand-1}^\twsC h_{\TheStrand-2} \numbereq 
\\
\Leftrightarrow \ & (h_{\TheStrand-1}h_{\TheStrand-1}^\twsC h_{\TheStrand-2})^2=(h_{\TheStrand-2}h_{\TheStrand-1}h_{\TheStrand-1}^\twsC)^2
\end{align*}
and 
\begin{align*}
\label{prop:sec_pres_free_prod_subgrp_1fact_eq_conj_Artin-rel_length_m}
& \langle h_{\TheStrand-1}^\twsC,(h_{\TheStrand-1}^\twsC)^{-1}h_{\TheStrand-1}h_{\TheStrand-1}^\twsC\rangle_\TheOrder=\langle(h_{\TheStrand-1}^\twsC)^{-1}h_{\TheStrand-1}h_{\TheStrand-1}^\twsC,h_{\TheStrand-1}^\twsC\rangle_\TheOrder \numbereq
\\
\Leftrightarrow \ & \langle h_{\TheStrand-1},h_{\TheStrand-1}^\twsC\rangle_\TheOrder=\langle h_{\TheStrand-1}^\twsC,h_{\TheStrand-1}\rangle_\TheOrder. 
\end{align*}
The equivalence in (\ref{prop:sec_pres_free_prod_subgrp_1fact_eq_cpxbr_rel}) follows as its analog from \ref{prop:sec_pres_free_prod_2fact_par_cpxbr_rel}. 
The observation in (\ref{prop:sec_pres_free_prod_subgrp_1fact_eq_conj_Artin-rel_length_m}) is analogous to \ref{prop:sec_pres_free_prod_2fact_par_conj_Artin-rel_length_m}. 
Consequently, the assignments $\phi_{\bar{\twsC}}$ induce an automorphism of $\tilde{A}(\bar{\Delta}_\TheStrand^\TheOrder)$. Hence, the presentation from Proposition \ref{prop:sec_pres_free_prod_subgrp_1fact} satisfies the conditions from Lemma \ref{lem:semidir_prod_pres_it2} and yields the semidirect product structure $\tilde{A}(\bar{\Delta}_\TheStrand^\TheOrder)\rtimes\cycm$. 

For $\TheOrder=2$, the relation \ref{prop:sec_pres_free_prod_subgrp_1fact_relR3} is given by $h_{\TheStrand-1}h_{\TheStrand-1}^\twsC=h_{\TheStrand-1}^\twsC h_{\TheStrand-1}$. Analogous to Theorem \ref{thm:semidir_prod_2fact}, this simplifies the $\phi_{\bar{\twsC}}$-image of $h_{\TheStrand-1}^{\twsC}h_{\TheStrand-2}h_{\TheStrand-1}^{\twsC}=h_{\TheStrand-2}h_{\TheStrand-1}^{\twsC}h_{\TheStrand-2}$ to $h_{\TheStrand-1}^{\twsC}h_{\TheStrand-2}h_{\TheStrand-1}^{\twsC}=h_{\TheStrand-2}h_{\TheStrand-1}^{\twsC}h_{\TheStrand-2}$ which is covered by \ref{prop:sec_pres_free_prod_subgrp_1fact_relR1}. Thus, we can read off from Table \ref{tab:subgrp_conj_bar_tau} that the defining relations of $A(\tilde{B}_\TheStrand)$ are preserved by $\phi_{\bar{\twsC}}$. As claimed above, this implies that the group $\Z_\TheStrand(D_{\cyc{2}}(1))$ has the semidirect product structure $A(\tilde{B}_\TheStrand)\rtimes\cyc{2}$. 
\end{proof}

\section{Central elements in $\Z_\TheStrand(D_{\cycm})$ and $\tilde{A}(\Delta_\TheStrand^\TheOrder)=\B(\TheOrder,\TheOrder,\TheStrand)$}
\label{sec:center}

Recall from \cite[Theorem 1.24]{KasselTuraev2008} that the center of $\B_\TheStrand$ is infinite cyclic generated by the braid pictured in Figure \ref{fig:central_elements} (left). Similarly, we want to show that the orbifold braid $\theta_\TheStrand$ in Figure \ref{fig:central_elements} (right) is contained in the center of $\Z_\TheStrand(D_{\cycm})$. Moreover, we will show that a certain power of $\theta_\TheStrand$ is contained in the center of $\tilde{\mathcal{A}}(\Delta_\TheStrand^\TheOrder)$. Throughout Section \ref{sec:center} we will use $\twsC:=\twC{1}$ as an abbreviation. 

\begin{figure}[H]
\resizebox{0.55\textwidth}{!}{\import{Grafiken/braid_and_mcg/}{central_elements.pdf_tex}}
\caption{Central elements in $\B_\TheStrand$ (left) and $\Z_\TheStrand(D_{\cycm})$ (right) for $\TheStrand=4$.}
\label{fig:central_elements}
\end{figure}

\begin{proposition}
\label{prop:D_Z_m_central_el}
For $\TheStrand\geq1$ and $\TheOrder\geq2$, the center of $\Z_\TheStrand(D_{\cycm})$ contains the element 
\begin{equation}
\label{eq:def_theta_n}
\theta_\TheStrand:=\gamma_\TheStrand\gamma_{\TheStrand-1}...\gamma_1 
\end{equation}
with 
\begin{equation}
\label{eq:def_gamma_j}
\gamma_\Strand:=h_{\Strand-1}...h_1\twsC h_1...h_{\Strand-1}
\end{equation}
 for $1\leq\Strand\leq\TheStrand$. 
\end{proposition}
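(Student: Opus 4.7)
The plan is to verify that $\theta_\TheStrand$ commutes with each generator in the set $\{h_1, \dots, h_{\TheStrand-1}, \twsC\}$ of $\Z_\TheStrand(D_{\cycm})$ supplied by Corollary~\ref{cor:gen_sets}\ref{cor:gen_sets_1fact}. A convenient observation, visible from~\eqref{eq:def_gamma_j}, is the recursion $\gamma_\Strand = h_{\Strand-1}\,\gamma_{\Strand-1}\,h_{\Strand-1}$ for $\Strand \ge 2$, which will be used repeatedly.

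For the commutation with $\twsC$, it suffices to show $[\gamma_\Strand, \twsC] = 1$ for every $\Strand$. The case $\Strand=1$ is trivial. For $\Strand\ge2$ we decompose $\gamma_\Strand = (h_{\Strand-1}\cdots h_2)(h_1\twsC h_1)(h_2\cdots h_{\Strand-1})$: the outer factors commute with $\twsC$ by relation~\ref{cor:pres_orb_braid_free_prod_rel3}, and the middle factor commutes with $\twsC$ by relation~\ref{cor:pres_orb_braid_free_prod_rel4}. Hence $\twsC$ commutes with the full product $\theta_\TheStrand$.

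For the commutation with a fixed $h_\Str$ (with $1 \le \Str < \TheStrand$), a short argument using braid relations establishes that $h_\Str$ commutes with $\gamma_\Strand$ whenever $\Strand \le \Str-1$ (trivial, since $\gamma_\Strand$ involves only $\twsC$ and $h_1,\dots,h_{\Strand-1}$, all commuting with $h_\Str$) and whenever $\Strand \ge \Str+2$ (push $h_\Str$ inward through $\gamma_\Strand$ using $h_\Str h_{\Str+1} h_\Str = h_{\Str+1} h_\Str h_{\Str+1}$, the commutation $[h_\Str,\twsC]=1$, and distance-two commutation). Writing
\[
\theta_\TheStrand = (\gamma_\TheStrand \cdots \gamma_{\Str+2})\,(\gamma_{\Str+1} \gamma_\Str)\,(\gamma_{\Str-1} \cdots \gamma_1),
\]
the problem therefore reduces to the single key identity $h_\Str\, \gamma_{\Str+1} \gamma_\Str = \gamma_{\Str+1} \gamma_\Str\, h_\Str$. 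Since $\gamma_{\Str+1} = h_\Str \gamma_\Str h_\Str$, canceling one $h_\Str$ on each side shows this is equivalent to
\[
(h_\Str \gamma_\Str)^2 = (\gamma_\Str h_\Str)^2.
\]

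The main obstacle is proving this last identity, which we tackle by induction on $\Str$. The base case $\Str=1$ reads $(h_1 \twsC)^2 = (\twsC h_1)^2$, i.e.\ precisely relation~\ref{cor:pres_orb_braid_free_prod_rel4}. For the inductive step we substitute $\gamma_\Str = h_{\Str-1} \gamma_{\Str-1} h_{\Str-1}$ and repeatedly apply the braid relation $h_{\Str-1} h_\Str h_{\Str-1} = h_\Str h_{\Str-1} h_\Str$ together with $[h_\Str, \gamma_{\Str-1}] = 1$ (valid since $\gamma_{\Str-1}$ involves only $h_1,\dots,h_{\Str-2}$ and $\twsC$). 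Both $(h_\Str \gamma_\Str)^2$ and $(\gamma_\Str h_\Str)^2$ rearrange into expressions of the form $h_{\Str-1} h_\Str (\cdots) h_\Str h_{\Str-1}$ bracketing the single word $X := \gamma_{\Str-1} h_{\Str-1} \gamma_{\Str-1}$; after cancellation the required equality reduces to $[h_{\Str-1}, X] = 1$, which is exactly $(h_{\Str-1}\gamma_{\Str-1})^2 = (\gamma_{\Str-1} h_{\Str-1})^2$---the inductive hypothesis. This closes the induction and, together with the earlier reductions, yields the proposition.
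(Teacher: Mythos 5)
Your argument is correct, but it takes a genuinely different route from the paper's for the commutation with the $h_\Str$. The paper first proves the full pairwise commutation $[\gamma_\Str,\gamma_\Strand]=1$ by rewriting each $\gamma_\Strand$ as $\twist{\Strand,}{\Strand-1}\cdots\twist{\Strand}{1}\twistC{\Strand}{1}$ and working in the pure orbifold braid group presentation of Theorem \ref{thm:pres_pure_free_prod}; it then gets $h_\Strand(\gamma_\Strand\gamma_{\Strand+1})=(\gamma_{\Strand+1}\gamma_\Strand)h_\Strand$ essentially for free from the definitional identities $h_\Strand\gamma_\Strand=\gamma_{\Strand+1}h_\Strand^{-1}$ and $h_\Strand^{-1}\gamma_{\Strand+1}=\gamma_\Strand h_\Strand$, and uses the commutation of the $\gamma$'s to reorder the product. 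You instead keep the factors of $\theta_\TheStrand$ in their given order, so you only ever need $h_\Str$ to commute with the adjacent block $\gamma_{\Str+1}\gamma_\Str$, which (since $\gamma_{\Str+1}=h_\Str\gamma_\Str h_\Str$) is exactly the identity $(h_\Str\gamma_\Str)^2=(\gamma_\Str h_\Str)^2$ --- equivalently, the commutation of the two \emph{adjacent} elements $\gamma_\Str$ and $\gamma_{\Str+1}$. Your induction on $\Str$, reducing via $\gamma_\Str=h_{\Str-1}\gamma_{\Str-1}h_{\Str-1}$ and $[h_\Str,\gamma_{\Str-1}]=1$ to the hypothesis $(h_{\Str-1}\gamma_{\Str-1})^2=(\gamma_{\Str-1}h_{\Str-1})^2$ with base case \ref{cor:pres_orb_braid_free_prod_rel4}, is a clean and self-contained replacement for the paper's excursion through the pure braid presentation; what it buys is a shorter proof using only the relations of Theorem \ref{thm:pres_Z_n}, at the price of not recording the (independently useful) fact that all the $\gamma_\Strand$ pairwise commute, which the paper reuses in Lemma \ref{lem:cbr_grp_Delta_n_m_cont_power_theta_n}.

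One small imprecision: in the case $\Strand\geq\Str+2$ you list ``$[h_\Str,\twsC]=1$'' among the tools for pushing $h_\Str$ through $\gamma_\Strand$. That relation fails for $\Str=1$ (relation \ref{cor:pres_orb_braid_free_prod_rel3} only gives $[\twsC,h_\Strand]=1$ for $\Strand\geq2$; if $[h_1,\twsC]=1$ held, relation \ref{cor:pres_orb_braid_free_prod_rel4} would be vacuous). The computation still works: after applying $h_\Str h_{\Str+1}h_\Str=h_{\Str+1}h_\Str h_{\Str+1}$ it is $h_{\Str+1}$ that travels through the core of $\gamma_\Strand$, so the relation actually needed is $[h_{\Str+1},\twsC]=1$, which always holds. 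With that correction the proof is complete.
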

\begin{proof}
We use the product decomposition of $\theta_\TheStrand$ and consider the products $h_\Str\gamma_\Strand$ and $\twsC\gamma_\Strand$ for $1\leq\Str<\TheStrand$ and $1\leq\Strand\leq\TheStrand$. For $1\leq\Strand,\NStrand<\TheStrand, \Strand<\NStrand$, the relations \ref{cor:pres_orb_braid_free_prod_rel2} and \ref{cor:pres_orb_braid_free_prod_rel3} imply that 
\begin{align*}
\label{prop:D_Z_m_central_el_eq_h_k_gamma_j}
h_\NStrand\gamma_\Strand & \mystackrel{\eqref{eq:def_gamma_j}}=\textcolor{\col}{h_\NStrand h_{\Strand-1}...h_1}\twsC h_1...h_{\Strand-1} \numbereq 
\\
& \mystackrel{\ref{cor:pres_orb_braid_free_prod_rel2}}= h_{\Strand-1}...h_1\textcolor{\col}{h_\NStrand\twsC} h_1...h_{\Strand-1}
\\
& \mystackrel{\ref{cor:pres_orb_braid_free_prod_rel3}}= h_{\Strand-1}...h_1\twsC\textcolor{\col}{h_\NStrand h_1...h_{\Strand-1}} 
\\
& \mystackrel{\ref{cor:pres_orb_braid_free_prod_rel2}}= h_{\Strand-1}...h_1\twsC h_1...h_{\Strand-1}h_\NStrand \mystackrel{\eqref{eq:def_gamma_j}}= \gamma_\Strand h_\NStrand. 
\end{align*}
For $1\leq\Str,\Strand\leq\TheStrand, \Str<\Strand-1$, we observe that 
\begin{align}
h_\Str\gamma_\Strand & \mystackrel{(\ref{eq:def_gamma_j})}= \textcolor{\col}{h_\Str h_{\Strand-1}..}..h_1\twsC h_1...h_{\Strand-1} 
\label{prop:D_Z_m_central_el_eq_h_i_gamma_j}
\\
& \mystackrel{\ref{cor:pres_orb_braid_free_prod_rel2}}= h_{\Strand-1}...h_{\Str+2}\textcolor{\col}{h_\Str h_{\Str+1}h_\Str}...h_1\twsC h_1...h_{\Strand-1} \nonumber
\\
& \mystackrel{\ref{cor:pres_orb_braid_free_prod_rel2}}= h_{\Strand-1}...h_\Str\textcolor{\col}{h_{\Str+1}h_{\Str-1}...h_1\twsC h_1..}..h_{\Strand-1} \nonumber
\\
& \mystackrel{\ref{cor:pres_orb_braid_free_prod_rel2},\ref{cor:pres_orb_braid_free_prod_rel3}}= h_{\Strand-1}...h_1\twsC h_1...h_{\Str-1}\textcolor{\col}{h_{\Str+1}h_\Str h_{\Str+1}}...h_{\Strand-1} \nonumber
\\
& \mystackrel{\ref{cor:pres_orb_braid_free_prod_rel2}}= h_{\Strand-1}...h_1\twsC h_1...h_{\Str+1}\textcolor{\col}{h_\Str h_{\Str+2}...h_{\Strand-1}} \nonumber
\\
& \mystackrel{\ref{cor:pres_orb_braid_free_prod_rel2}}= h_{\Strand-1}...h_1\twsC h_1...h_{\Strand-1}h_\Str \mystackrel{(\ref{eq:def_gamma_j})}= \gamma_\Strand h_\Str, \nonumber
\end{align}

Moreover, we observe: 
\begin{align}
\twsC\gamma_\Strand & \mystackrel{(\ref{eq:def_gamma_j})}= \textcolor{\col}{\twsC h_{\Strand-1}...}h_1\twsC h_1...h_{\Strand-1} 
\label{prop:D_Z_m_central_el_eq_tau_gamma_j}
\\
& \mystackrel{\ref{cor:pres_orb_braid_free_prod_rel3}}=  h_{\Strand-1}...h_2\textcolor{\col}{\twsC h_1\twsC h_1}...h_{\Strand-1} \nonumber
\\
& \mystackrel{\ref{cor:pres_orb_braid_free_prod_rel4}}=  h_{\Strand-1}...h_1\twsC h_1\textcolor{\col}{\twsC h_2...h_{\Strand-1}} \nonumber
\\
& \mystackrel{\ref{cor:pres_orb_braid_free_prod_rel3}}=  h_{\Strand-1}...h_1\twsC h_1...h_{\Strand-1}\twsC \mystackrel{(\ref{eq:def_gamma_j})}= \gamma_\Strand\twsC, \nonumber
\\[5mm]
h_\Strand\gamma_\Strand & \mystackrel{(\ref{eq:def_gamma_j})}= h_\Strand h_{\Strand-1}...h_1\twsC h_1...h_{\Strand-1}
\label{prop:D_Z_m_central_el_eq_h_j_gamma_j}
\\
& \mystackrel{$\vee$}= h_\Strand h_{\Strand-1}...h_1\twsC h_1...h_{\Strand-1}(h_\Strand h_\Strand^{-1}) \mystackrel{(\ref{eq:def_gamma_j})}= \gamma_{\Strand+1}h_\Strand^{-1} \text{ and } \nonumber
\\[5mm]
h_\Strand^{-1}\gamma_{\Strand+1} & \mystackrel{(\ref{eq:def_gamma_j})}= \textcolor{\short}{h_\Strand^{-1}h_\Strand}h_{\Strand-1}...h_1\twsC h_1...h_\Strand 
\label{prop:D_Z_m_central_el_eq_h_j_gamma_j+1}
\\
& \mystackrel{}= h_{\Strand-1}...h_1\twsC h_1...h_{\Strand-1}h_\Strand \mystackrel{(\ref{eq:def_gamma_j})}= \gamma_\Strand h_\Strand \nonumber
\end{align}

In the following, it will be convenient to express $\gamma_\Strand$ in terms of the pure generators from Theorem \ref{thm:pres_pure_free_prod}. We obtain: 
\begin{align}
\label{prop:D_Z_m_central_el_def_gamma_j}
& \twist{\Strand,}{\Strand-1}\twist{\Strand,}{\Strand-2}...\twist{\Strand}{1}\twistC{\Strand}{1} \numbereq
\\
\mystackrel{\eqref{eq:def_a_ji},\eqref{eq:def_a_kc}}= & h_{\Strand-1}^{\textcolor{\short}{2}}\textcolor{\short}{(h_{\Strand-1}^{-1}}h_{\Strand-2}^{\textcolor{\short}{2}}\textcolor{\short}{h_{\Strand-1})..}..\textcolor{\short}{..(h_{\Strand-1}^{-1}...h_2^{-1}}h_1^{\textcolor{\short}{2}}\textcolor{\short}{h_2...h_{\Strand-1})(h_{\Strand-1}^{-1}...h_1^{-1}}\twsC h_1...h_{\Strand-1}) \nonumber
\\
\mystackrel{}= & h_{\Strand-1}...h_1\twsC h_1...h_{\Strand-1} \stackrel{\eqref{eq:def_gamma_j}}= \gamma_\Strand. \nonumber
\end{align}

Further, recall from Theorem \ref{thm:pres_pure_free_prod} that the pure braids inter alia satisfy the following relations: For $1\leq\Str,\Strand,\NStrand,\NNStrand\leq\TheStrand, \Str<\Strand<\NStrand<\NNStrand$ and $1\leq\nu\leq\TheCone$ 
\begin{align*}
\label{eq:rel_pure_free_prod_rel2}
[\twist{\Strand}{\Str},\twist{\NNStrand}{\NStrand}] & =1, \numbereq 
\\
\label{eq:rel_pure_free_prod_rel2_1}
[\twistC{\Strand}{\nu},\twist{\NNStrand}{\NStrand}] & =1, \numbereq 
\\
\label{eq:rel_pure_free_prod_rel3}
[\twist{\NNStrand}{\Str},\twist{\NStrand}{\Strand}] & =1, \numbereq 
\\
\label{eq:rel_pure_free_prod_rel3_1}
[\twistC{\NNStrand}{\nu},\twist{\NStrand}{\Strand}] & =1, \numbereq 
\\
\label{eq:rel_pure_free_prod_rel4}
[\twist{\NNStrand}{\NStrand}\twist{\NNStrand}{\Strand}\twist{\NNStrand}{\NStrand}^{-1},\twist{\NStrand}{\Str}] & =1, \numbereq 
\\
\label{eq:rel_pure_free_prod_rel4_1}
[\twist{\NStrand}{\Strand}\twist{\NStrand}{\Str}\twist{\NStrand}{\Strand}^{-1},\twistC{\Strand}{\nu}] & =1, \numbereq 
\\
\label{eq:rel_pure_free_prod_rel5}
\twist{\Strand}{\Str}\twist{\NStrand}{\Strand}\twist{\NStrand}{\Str} & =\twist{\NStrand}{\Strand}\twist{\NStrand}{\Str}\twist{\Strand}{\Str}, \numbereq 
\\
\label{eq:rel_pure_free_prod_rel5_1}
\twist{\Strand}{\Str}\twistC{\Strand}{\nu}\twistC{\Str}{\nu} & =\twistC{\Str}{\nu}\twist{\Strand}{\Str}\twistC{\Strand}{\nu} \numbereq  
\end{align*}

This prepares us to observe that $\gamma_\Strand$ and $\gamma_\Str$ commute for $1\leq\Str<\Strand\leq\TheStrand$. For $1\leq\Strr,\Str,\Strand\leq\TheStrand, \Strr<\Str<\Strand$, we observe 
\begin{align*}
\label{prop:D_Z_m_central_el_eq_gamma_j_a_ih}
\gamma_\Strand\twist{\Str}{\Strr} & \mystackrel{\eqref{prop:D_Z_m_central_el_def_gamma_j}}= \twist{\Strand,}{\Strand-1}..\textcolor{\col}{..\twist{\Strand}{1}\twistC{\Strand}{1}\twist{\Str}{\Strr}} \numbereq
\\
& \mystackrel{\eqref{eq:rel_pure_free_prod_rel3},\eqref{eq:rel_pure_free_prod_rel3_1}}= \twist{\Strand,}{\Strand-1}...\twist{\Strand}{\Strr}\twist{\Str}{\Strr}\twist{\Strand,}{\Strr-1}...\twist{\Strand}{1}\twistC{\Strand}{1} \nonumber
\\
& \mystackrel{$\vee$}= \twist{\Strand,}{\Strand-1}...\twist{\Strand,}{\Strr+1}(\twist{\Strand}{\Str}^{-1}\textcolor{\col}{\twist{\Strand}{\Str})\twist{\Strand}{\Strr}\twist{\Str}{\Strr}}\twist{\Strand,}{\Strr-1}...\twist{\Strand}{1}\twistC{\Strand}{1} \nonumber
\\
& \mystackrel{\eqref{eq:rel_pure_free_prod_rel5}}= \twist{\Strand,}{\Strand-1}...\textcolor{\col}{\twist{\Strand}{\Str}\twist{\Strand,}{\Str-1}...\twist{\Strand,}{\Strr+1}\twist{\Strand}{\Str}^{-1}\twist{\Str}{\Strr}}\twist{\Strand}{\Str}\twist{\Strand}{\Strr}\twist{\Strand,}{\Strr-1}...\twist{\Strand}{1}\twistC{\Strand}{1} \nonumber
\\
& \mystackrel{\eqref{eq:rel_pure_free_prod_rel4}}= \textcolor{\col}{\twist{\Strand,}{\Strand-1}...\twist{\Strand,}{\Str+1}\twist{\Str}{\Strr}}\twist{\Strand}{\Str}\twist{\Strand,}{\Str-1}...\twist{\Strand,}{\Strr+1}\textcolor{\short}{\twist{\Strand}{\Str}^{-1}\twist{\Strand}{\Str}}\twist{\Strand}{\Strr}\twist{\Strand,}{\Strr-1}...\twist{\Strand}{1}\twistC{\Strand}{1} \nonumber
\\
& \mystackrel{\eqref{eq:rel_pure_free_prod_rel2}}= \twist{\Str}{\Strr}\twist{\Strand,}{\Strand-1}...\twist{\Strand,}{\Str+1}\twist{\Strand}{\Str}\twist{\Strand,}{\Str-1}...\twist{\Strand}{1}\twistC{\Strand}{1} \mystackrel{\eqref{prop:D_Z_m_central_el_def_gamma_j}}= \twist{\Str}{\Strr}\gamma_\Strand. \nonumber
\end{align*}

Using a similar observation for $\twistC{\Str}{1}$, we obtain: 

\begin{align*}
\label{prop:D_Z_m_central_el_eq_gamma_i_gamma_j}
\gamma_\Strand\gamma_\Str & \mystackrel{\eqref{eq:def_gamma_j}}= \textcolor{\col}{\gamma_\Strand\twist{\Str,}{\Str-1}...\twist{\Str}{1}}\twistC{\Str}{1} \numbereq
\\
& \mystackrel{\eqref{prop:D_Z_m_central_el_eq_gamma_j_a_ih}}= \twist{\Str,}{\Str-1}...\twist{\Str}{1}\gamma_\Strand\twistC{\Str}{1} \nonumber
\\
& \mystackrel{\eqref{eq:def_gamma_j}}= \twist{\Str,}{\Str-1}...\twist{\Str}{1}\twist{\Strand,}{\Strand-1}...\twist{\Strand}{1}\twistC{\Strand}{1}\twistC{\Str}{1} \nonumber
\\
& \mystackrel{$\vee$}= \twist{\Str,}{\Str-1}...\twist{\Str}{1}\twist{\Strand,}{\Strand-1}...\twist{\Strand}{1}(\twist{\Strand}{\Str}^{-1}\textcolor{\col}{\twist{\Strand}{\Str})\twistC{\Strand}{1}\twistC{\Str}{1}} \nonumber
\\
& \mystackrel{\eqref{eq:rel_pure_free_prod_rel5_1}}= \twist{\Str,}{\Str-1}...\twist{\Str}{1}\twist{\Strand,}{\Strand-1}...\twist{\Strand,}{\Str+1}\textcolor{\col}{\twist{\Strand}{\Str}\twist{\Strand,}{\Str-1}...\twist{\Strand}{1}\twist{\Strand}{\Str}^{-1}\twistC{\Str}{1}}\twist{\Strand}{\Str}\twistC{\Strand}{1} \nonumber
\\
& \mystackrel{\eqref{eq:rel_pure_free_prod_rel4_1}}= \twist{\Str,}{\Str-1}...\twist{\Str}{1}\twist{\Strand,}{\Strand-1}...\twist{\Strand,}{\Str+1}\twistC{\Str}{1}\twist{\Strand}{\Str}\twist{\Strand,}{\Str-1}...\twist{\Strand}{1}\textcolor{\short}{\twist{\Strand}{\Str}^{-1}\twist{\Strand}{\Str}}\twistC{\Strand}{1} \nonumber
\\
& \mystackrel{}= \twist{\Str,}{\Str-1}...\twist{\Str}{1}\textcolor{\col}{\twist{\Strand,}{\Strand-1}...\twist{\Strand,}{\Str+1}\twistC{\Str}{1}}\twist{\Strand}{\Str}\twist{\Strand,}{\Str-1}...\twist{\Strand}{1}\twistC{\Strand}{1} \nonumber
\\
& \mystackrel{\eqref{eq:rel_pure_free_prod_rel2_1}}= \twist{\Str,}{\Str-1}...\twist{\Str}{1}\twistC{\Str}{1}\twist{\Strand,}{\Strand-1}...\twist{\Strand,}{\Str+1}\twist{\Strand}{\Str}\twist{\Strand,}{\Str-1}...\twist{\Strand}{1}\twistC{\Strand}{1} \mystackrel{\eqref{prop:D_Z_m_central_el_def_gamma_j}}= \gamma_\Str\gamma_\Strand. \nonumber
\end{align*}

This implies 
\begin{align*}
h_\Strand\theta_\TheStrand & \mystackrel{\eqref{eq:def_theta_n}}= h_\Strand\gamma_\TheStrand...\textcolor{\col}{\gamma_{\Strand+1}\gamma_\Strand}...\gamma_1
\\
& \mystackrel{(\ref{prop:D_Z_m_central_el_eq_gamma_i_gamma_j})}= \textcolor{\col}{h_\Strand\gamma_\TheStrand...\gamma_{\Strand+2}}\gamma_\Strand\gamma_{\Strand+1}\gamma_{\Strand-1}...\gamma_1
\\
& \mystackrel{(\ref{prop:D_Z_m_central_el_eq_h_i_gamma_j})}= \gamma_\TheStrand...\gamma_{\Strand+2}\textcolor{\col}{h_\Strand\gamma_\Strand}\gamma_{\Strand+1}\gamma_{\Strand-1}...\gamma_1
\\
& \mystackrel{(\ref{prop:D_Z_m_central_el_eq_h_j_gamma_j})}= \gamma_\TheStrand...\gamma_{\Strand+1}\textcolor{\col}{h_\Strand^{-1}\gamma_{\Strand+1}}\gamma_{\Strand-1}...\gamma_1
\\
& \mystackrel{(\ref{prop:D_Z_m_central_el_eq_h_j_gamma_j+1})}= \gamma_\TheStrand...\gamma_\Strand\textcolor{\col}{h_\Strand\gamma_{\Strand-1}...\gamma_1}
\\
& \mystackrel{(\ref{prop:D_Z_m_central_el_eq_h_k_gamma_j})}= \gamma_\TheStrand...\gamma_\Strand \gamma_{\Strand-1}...\gamma_1h_\Strand \mystackrel{\eqref{eq:def_theta_n}}= \theta_\TheStrand h_\Strand. 
\end{align*}
Thus, $h_\Strand$ commutes with $\theta_\TheStrand$ and $\twsC$ commutes with each $\gamma_\Strand$ and consequently with~$\theta_\TheStrand$. This yields that $\theta_\TheStrand$ is contained in the  center of $\Z_\TheStrand(D_{\cycm}))$.  
\end{proof}

Under the epimorphism 
\[
\PZ_\TheStrand(D_{\cycm})\rightarrow\ZZ
\]
that maps $a_{\Strand1}$ to $1$ for $2\leq\Strand\leq\TheStrand$ and all other generators from Theorem \ref{thm:pres_pure_free_prod} to $0$, the element $\theta_\TheStrand$ maps to $\TheStrand-1$. Thus, for $\TheStrand\geq2$, the element $\theta_\TheStrand$ is of infinite order. Moreover, we determine the powers of $\theta_\TheStrand$ which are contained in the subgroup~$\tilde{\mathcal{A}}(\Delta_\TheStrand^\TheOrder)$. 

\begin{lemma}
\label{lem:cbr_grp_Delta_n_m_cont_power_theta_n}
Let $\TheStrand\geq2$. Moreover, let $\Order>0$ be the minimal number such that $\TheOrder$ divides $\Order\TheStrand$. The group $\tilde{\mathcal{A}}(\Delta_\TheStrand^\TheOrder)$ contains $\theta_\TheStrand^k$ if and only if $k$ is a multiple of $\Order$. 
\end{lemma}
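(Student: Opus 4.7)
The plan is to exploit the semidirect product decomposition $\Z_\TheStrand(D_{\cycm})=\tilde{A}(\Delta_\TheStrand^\TheOrder)\rtimes\cycm$ established in Corollary~\ref{cor:sec_pres_free_prod_1fact}, where $\cycm=\langle\twsC\rangle$. Associated to this semidirect product is the quotient homomorphism
\[
\pi:\Z_\TheStrand(D_{\cycm})\longrightarrow\cycm
\]
whose kernel is precisely $\tilde{A}(\Delta_\TheStrand^\TheOrder)$. In particular, an element $g\in\Z_\TheStrand(D_{\cycm})$ lies in $\tilde{A}(\Delta_\TheStrand^\TheOrder)$ if and only if $\pi(g)=1$ in $\cycm$.

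First I would read off $\pi$ from the presentation in Proposition~\ref{prop:sec_pres_free_prod_2fact} (or its one-cone-point analogue used to obtain Corollary~\ref{cor:sec_pres_free_prod_1fact}): the generators $h_1,\dots,h_{\TheStrand-1}$ and $\htwC$ of the normal subgroup map to $1$, while $\twsC\mapsto\twsC$. Consequently, for each $1\leq\Strand\leq\TheStrand$ the element
\[
\gamma_\Strand=h_{\Strand-1}\cdots h_1\twsC h_1\cdots h_{\Strand-1}
\]
satisfies $\pi(\gamma_\Strand)=\twsC$, and thus $\pi(\theta_\TheStrand)=\twsC^{\TheStrand}$ by \eqref{eq:def_theta_n}.

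Next I would conclude: $\theta_\TheStrand^k\in\tilde{A}(\Delta_\TheStrand^\TheOrder)$ if and only if $\pi(\theta_\TheStrand^k)=\twsC^{k\TheStrand}=1$ in $\cycm$, which is equivalent to $\TheOrder\mid k\TheStrand$. By the definition of $\Order$ as the minimal positive integer with $\TheOrder\mid\Order\TheStrand$, the set of integers $k$ with $\TheOrder\mid k\TheStrand$ is exactly $\Order\ZZ$. Hence $\theta_\TheStrand^k\in\tilde{A}(\Delta_\TheStrand^\TheOrder)$ if and only if $\Order\mid k$.

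The argument is essentially bookkeeping once the semidirect product structure is in hand; the only subtle point is verifying that the map $h_\Strand\mapsto1$, $\htwC\mapsto1$, $\twsC\mapsto\twsC$ is indeed the quotient projection of the semidirect product (so its kernel really equals $\tilde{A}(\Delta_\TheStrand^\TheOrder)$, and not just contains it). This is immediate from the construction in Corollary~\ref{cor:sec_pres_free_prod_1fact}, where $\tilde{A}(\Delta_\TheStrand^\TheOrder)$ was defined as the subgroup generated by $h_1,\dots,h_{\TheStrand-1},\htwC$ and $\cycm$ as the quotient generated by $\twsC$.
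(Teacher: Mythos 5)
Your argument is correct, and its second half (the ``only if'' direction) is exactly the paper's: the paper also uses the projection $p:\Z_\TheStrand(D_{\cycm})\rightarrow\cycm$ induced by the semidirect product of Corollary~\ref{cor:sec_pres_free_prod_1fact}, sends $\theta_\TheStrand^k$ to $\overline{k\TheStrand}$, and invokes the minimality of $\Order$. Where you genuinely diverge is the ``if'' direction. You get it for free from the abstract fact that in a semidirect product the normal subgroup \emph{equals} the kernel of the quotient projection, so $\pi(\theta_\TheStrand^k)=1$ already forces $\theta_\TheStrand^k\in\tilde{A}(\Delta_\TheStrand^\TheOrder)$; this is legitimate, since by Lemma~\ref{lem:semidir_prod_pres} every element of $\Z_\TheStrand(D_{\cycm})$ factors as an element of $\langle h_1,\dots,h_{\TheStrand-1},\htwC\rangle$ times a power of $\twsC$, so that subgroup is all of $\ker\pi$ and not merely contained in it. The paper instead proves this direction by an explicit rewriting, using the commutativity of the $\gamma_\Strand$ from \eqref{prop:D_Z_m_central_el_eq_gamma_i_gamma_j} and $\twsC^{\Order\TheStrand}=1$ to express $\theta_\TheStrand^\Order$ concretely as $\left((h_1\htwC)(h_2h_1\htwC h_2)\cdots(h_{\TheStrand-1}\cdots h_1\htwC h_2\cdots h_{\TheStrand-1})\right)^\Order$. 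Your route is shorter and makes the equivalence a one-line bookkeeping statement; the paper's computation costs more but produces an explicit word for $\theta_\TheStrand^\Order$ in the generators of $\tilde{A}(\Delta_\TheStrand^\TheOrder)=\B(\TheOrder,\TheOrder,\TheStrand)$, which is the form in which the central element is exhibited in Corollary~\ref{cor:cbr_grp_Delta_n_m_center}. Both are complete proofs of the stated lemma.
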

\begin{proof}
Let us assume that $k$ is a multiple of $\Order$. We recall that 
\[
\theta_\TheStrand\stackrel{\eqref{eq:def_theta_n}}=\gamma_\TheStrand\gamma_{\TheStrand-1}...\gamma_1 \; \text{ with } \; \gamma_\Strand\stackrel{\eqref{eq:def_gamma_j}}=h_{\Strand-1}...h_1\twsC h_1...h_{\Strand-1}. 
\]
In particular, $\gamma_1=\twsC$ is an element of order $\TheOrder$. Since $\TheOrder$ divides $\Order\TheStrand$, one has $\twsC^{\Order\TheStrand}=1$ and consequently $\twsC^\Order=\twsC^{-\Order(\TheStrand-1)}$. 
\begin{align*}
\label{lem:cbr_grp_Delta_n_m_cont_power_theta_n_eq_theta_n_m_in_A}
\theta_\TheStrand^\Order & \mystackrel{\eqref{eq:def_theta_n}}=(\gamma_\TheStrand\gamma_{\TheStrand-1}...\gamma_1)^\Order \numbereq
\\
& \mystackrel{(\ref{prop:D_Z_m_central_el_eq_gamma_i_gamma_j})}=\twsC^\Order\gamma_2^\Order...\gamma_\TheStrand^\Order \nonumber
\\
& \mystackrel{}=\twsC^{-\Order(\TheStrand-1)}\gamma_2^\Order...\gamma_\TheStrand^\Order \nonumber
\\
& \mystackrel{\eqref{prop:D_Z_m_central_el_eq_gamma_i_gamma_j}}=((\gamma_2\twsC^{-1})(\gamma_3\twsC^{-1})...(\gamma_\TheStrand\twsC^{-1}))^\Order \nonumber
\\
& \mystackrel{\eqref{eq:def_gamma_j}}=((h_1\twsC h_1\twsC^{-1})(h_2h_1\twsC h_1\textcolor{\col}{h_2\twsC^{-1}})...(h_{\TheStrand-1}...h_1\twsC h_1\textcolor{\col}{...h_{\TheStrand-1}\twsC^{-1}}))^\Order \nonumber
\\
& \mystackrel{\ref{cor:pres_orb_braid_free_prod_rel3}}=((h_1\twsC h_1\twsC^{-1})(h_2h_1\twsC h_1\twsC^{-1}h_2)...(h_{\TheStrand-1}...h_1\twsC h_1\twsC^{-1}h_2...h_{\TheStrand-1}))^\Order \nonumber
\\
& \mystackrel{}=((h_1\htwC)(h_2h_1\htwC h_2)...(h_{\TheStrand-1}...h_1h_1^\twsC h_2...h_{\TheStrand-1}))^\Order\in\tilde{\mathcal{A}}(\Delta_\TheStrand^\TheOrder). \nonumber
\end{align*}
This implies that $\theta_\TheStrand^k\in\tilde{\mathcal{A}}(\Delta_\TheStrand^\TheOrder)$ for any multiple $k$ of $\Order$. 

Now let us assume that $\theta_\TheStrand^k$ is contained in $\tilde{\mathcal{A}}(\Delta_\TheStrand^\TheOrder)$ and we consider the following homomorphism induced by the semidirect product structure from Corollary \ref{cor:sec_pres_free_prod_1fact} :  
\begin{equation*}
p:\Z_\TheStrand(D_{\cycm})\rightarrow\cycm, 
\end{equation*}
where the homomorphism $p$ is counting the number of generators $\twsC$ (modulo $\TheOrder$), i.e.\ how many times the braid in total encircles the cone point. Under $p$ the element $\theta_\TheStrand^k$ maps to $\overline{k\TheStrand}$ while the generators $\htwC$ and $h_\Strand$ for $1\leq\Strand<\TheStrand$ of $\tilde{\mathcal{A}}(\Delta_\TheStrand^\TheOrder)$ map to $\bar{0}$. Since $\theta_\TheStrand^k$ is contained in $\tilde{\mathcal{A}}(\Delta_\TheStrand^\TheOrder)$, $\TheOrder$ divides $k\TheStrand$. By assumption, $\TheOrder$ also divides $\Order\TheStrand$. Hence, the minimality of $\Order$ implies that $\Order$ divides $k$. 
\end{proof}

\begin{corollary}
\label{cor:cbr_grp_Delta_n_m_center}
For $\TheStrand,\TheOrder\geq2$, the center of $\B(\TheOrder,\TheOrder,\TheStrand)$ contains the non-trivial element $\theta_\TheStrand^\Order$ where $\Order>0$ is the minimal number such that $\TheOrder$ divides $\Order\TheStrand$. 
\end{corollary}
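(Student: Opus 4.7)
The plan is to derive the corollary directly from three structural inputs already in place: the semidirect product decomposition $\Z_\TheStrand(D_{\cycm})=\tilde{A}(\Delta_\TheStrand^\TheOrder)\rtimes\cycm$ of Corollary~\ref{cor:sec_pres_free_prod_1fact}, which identifies the normal factor $\tilde{A}(\Delta_\TheStrand^\TheOrder)$ with $\B(\TheOrder,\TheOrder,\TheStrand)$ throughout the stated parameter range; the centrality of $\theta_\TheStrand$ inside the full group $\Z_\TheStrand(D_{\cycm})$ established in Proposition~\ref{prop:D_Z_m_central_el}; and the membership criterion of Lemma~\ref{lem:cbr_grp_Delta_n_m_cont_power_theta_n}, which pinpoints $\theta_\TheStrand^\Order$ as the smallest positive power of $\theta_\TheStrand$ lying in $\tilde{A}(\Delta_\TheStrand^\TheOrder)$.

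Given these inputs, the centrality half of the statement is formal: I would observe that an element which commutes with every element of an ambient group automatically commutes with every element of a subgroup, so $\theta_\TheStrand^\Order$ being central in $\Z_\TheStrand(D_{\cycm})$ and at the same time belonging to $\tilde{A}(\Delta_\TheStrand^\TheOrder)$ forces it into the center of $\B(\TheOrder,\TheOrder,\TheStrand)$. No further work beyond citing the three inputs is needed for this half.

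To finish, I would verify non-triviality by recycling the homomorphism sketched right after Lemma~\ref{lem:cbr_grp_Delta_n_m_cont_power_theta_n}. Equation~\eqref{prop:D_Z_m_central_el_def_gamma_j} writes each $\gamma_\Strand$, and hence $\theta_\TheStrand$ itself, as a word in the pure generators from Theorem~\ref{thm:pres_pure_free_prod}, so $\theta_\TheStrand\in\PZ_\TheStrand(D_{\cycm})$ and one can evaluate the abelianization-type epimorphism $\PZ_\TheStrand(D_{\cycm})\to\ZZ$ that sends $\twist{\Strand}{1}\mapsto 1$ for $2\leq\Strand\leq\TheStrand$ and every other pure generator to~$0$. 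A direct count in $\gamma_\Strand=\twist{\Strand,}{\Strand-1}\cdots\twist{\Strand}{1}\twistC{\Strand}{1}$ gives $\theta_\TheStrand\mapsto\TheStrand-1$, and consequently $\theta_\TheStrand^\Order\mapsto\Order(\TheStrand-1)>0$ for $\TheStrand\geq 2$, ruling out $\theta_\TheStrand^\Order=1$. I do not foresee any genuine obstacle; the only point requiring attention is confirming that $\theta_\TheStrand$ really lies in the pure subgroup, and this is transparent from~\eqref{prop:D_Z_m_central_el_def_gamma_j}.
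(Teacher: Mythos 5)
Your proposal is correct and assembles exactly the ingredients the paper intends: centrality of $\theta_\TheStrand$ from Proposition~\ref{prop:D_Z_m_central_el}, membership of $\theta_\TheStrand^\Order$ in $\tilde{A}(\Delta_\TheStrand^\TheOrder)\cong\B(\TheOrder,\TheOrder,\TheStrand)$ from Lemma~\ref{lem:cbr_grp_Delta_n_m_cont_power_theta_n}, and non-triviality via the epimorphism $\PZ_\TheStrand(D_{\cycm})\rightarrow\ZZ$ sending $\theta_\TheStrand\mapsto\TheStrand-1$. This is the same argument the paper leaves implicit, so nothing further is needed.
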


\bibliographystyle{plain}
\bibliography{paper_general_Allcock}

\end{document}